\numberwithin{equation}{section}
\newtheorem{prop}{Proposition}[section]
\newtheorem{theo}[prop]{Theorem}
\newtheorem{lemm}[prop]{Lemma}
\newtheorem{rema}[prop]{Remark}
\newtheorem{defi}[prop]{Definition}
\def\and{\quad{\rm and}\quad}
\def\<{\langle}
\def\>{\rangle}
\begin{document}
\title[Hessian quotient equation]{Pogorelov type interior $C^2$ estimate for Hessian quotient equation and its application}
\author[Siyuan Lu and Yi-Lin Tsai]{Siyuan Lu and Yi-Lin Tsai}
\address{Department of Mathematics and Statistics, McMaster University, 1280
Main Street West, Hamilton, ON, L8S 4K1, Canada.}
\email{siyuan.lu@mcmaster.ca}
\email{tsaiy11@mcmaster.ca}
\thanks{2020 Mathematics Subject Classification. 35J60, 35J15, 35J96}
\thanks{Research of the first author was supported in part by NSERC Discovery Grant.}

\begin{abstract}
In this paper, we derive a Pogorelov type interior $C^2$ estimate for the Hessian quotient equation $\frac{\sigma _n}{\sigma _k}\left( D^2u\right) =f$. As an application, we show that convex viscosity solutions are regular for $k\leq n-3$ if $u\in C^{1,\alpha}$ with $\alpha>1-\frac{2}{n-k}$ or $u\in W^{2,p}$ with $p\geq\frac{(n-1)(n-k)}{2}$. Both exponents are sharp in view of the example in \cite{Lu24}.
\end{abstract}

\maketitle

\section{Introduction}

In a classic paper \cite{H}, Heinz proved a pure interior $C^2$ estimate for the Monge-Amp\`ere equation $\det\left( D^2u\right)=f$ for $n=2$ (see also recent proofs \cite{CHO,Liu}). However, such estimate fails for the Monge-Amp\`ere equation for $n\geq 3$, due to a counterexample by Pogorelov \cite{Pb}. Instead, Pogorelov \cite{Pb} proved an interior $C^2$ estimate with homogeneous boundary conditions for the Monge-Amp\`ere equation for all dimensions (see also recent proofs \cite{Yuan,SY-24}). This estimate is known as \textit{Pogorelov type interior $C^2$ estimate} in the literature and it plays an essential role in the regularity theory of Monge-Amp\`ere equation.

It is of great interest to determine whether Pogorelov type interior $C^2$ estimate holds for other fully nonlinear equations. 

A natural class of fully nonlinear equations is the Hessian equation $\sigma_k\left( D^2u\right)=f$, where $\sigma_k$ is the $k$-th elementary symmetric function. Note that for $k=n$, it reduces to the Monge-Amp\`ere equation, and for $k=1$, it reduces to the Laplace equation. In a fundamental paper \cite{CW}, Chou and Wang established a Pogorelov type interior $C^2$ estimate for the Hessian equation $\sigma_k\left( D^2u\right)=f$. For recent progress, see \cite{LRW,Tu,RZhang}.

Another natural class of fully nonlinear equations is the Hessian quotient equation. In contrast to the Hessian equation, very little was known in this case. The purpose of this paper is to establish a Pogorelov type interior $C^2$ estimate for the following Hessian quotient equation $\frac{\sigma_n}{\sigma_k}\left( D^2u\right)=f$.

Before we state our main theorem, let us recall the definition of section, see for instance in \cite{CG}.
\begin{defi}
Let $u$ be a convex function with $u(0)=0$ and $Du(0)=0$. For $h>0$, define the section $\Sigma_h$ of $u$ by 
\begin{align*}
\Sigma_h=\{x\in \mathbb{R}^n|u(x)<h\}.
\end{align*}
\end{defi}

In the following, when we write $u\in C^{k}\left( \overline{\Sigma }_{h}\right) $, we will always assume that $u(0)=0$, $Du(0)=0$, and $u=h$ on $\partial \Sigma _{h}$.

\begin{theo}\label{theorem-1} 
Let $n\geq 2$, $1\leq k<n$. Let $f\in C^{1,1}\left( \overline{\Sigma} _2\times \mathbb{R}\right)$ be a positive function and let $u\in C^4\left(\overline{\Sigma}_2\right) $ be a convex solution of 
\begin{equation}\label{main-eqn}
\frac{\sigma _n}{\sigma _k}\left( D^2u\right)=f(x,u).
\end{equation}

Then there exist positive constants $\tau$ and $C$, depending only on $n$, $k$, the diameter of $\Sigma _2$, $\|u\| _{C^{0,1}\left(\overline{\Sigma}_2\right) }$, $\min_{\overline{\Sigma}_2\times [0,2]} f$ and $\|f\|_{C^{1,1}\left( \overline{\Sigma}_2\times [0,2]\right) }$, such that $\overline{B}_\tau \subseteq \Sigma _1$ and 
\begin{equation*}
\max_{\overline{B}_\tau}|D^{2}u|\leq C.
\end{equation*}
\end{theo}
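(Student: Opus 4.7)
The plan is to prove the estimate via a Pogorelov-type auxiliary function on $\overline{\Sigma}_2$. For a unit vector $\xi\in S^{n-1}$ and positive constants $\beta,a$ to be chosen, consider
\[
W(x,\xi)=\beta\log(2-u(x))+\log u_{\xi\xi}(x)+\tfrac{a}{2}|Du(x)|^2.
\]
Since $u=2$ on $\partial\Sigma_2$, the cutoff drives $W$ to $-\infty$ at $\partial\Sigma_2$, so the maximum of $W$ over $\overline{\Sigma}_2\times S^{n-1}$ is attained at an interior point $x_0$ with some direction $\xi_0$. After an orthogonal change of coordinates we may assume $\xi_0=e_1$ and that $D^2u(x_0)$ is diagonal with eigenvalues $\lambda_1\ge\cdots\ge\lambda_n>0$, so $u_{\xi_0\xi_0}(x_0)=\lambda_1$. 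A uniform upper bound on $\lambda_1(x_0)(2-u(x_0))^\beta$ then translates, using the $C^{0,1}$ bound on $u$, into a uniform upper bound on $W$ and hence on $|D^2u|$ on any interior ball $\overline{B}_\tau\subset\Sigma_1$; the radius $\tau$ is produced by a quantitative convexity argument from $u(0)=Du(0)=0$ and the $C^{0,1}$ bound.

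With $F=\sigma_n/\sigma_k$, the derivation will combine the critical-point conditions $W_i=0$ and $F^{ii}W_{ii}\le 0$ at $x_0$ (where $(F^{ij})$ is diagonal) with the linearized equations
\[
F^{ii}u_{ii1}=f_1+f_u u_1,\qquad F^{ii}u_{ii11}=f_{11}+2f_{1u}u_1+f_{uu}u_1^2+f_u u_{11}-F^{ij,rs}u_{ij1}u_{rs1},
\]
and the concavity inequality
\[
F^{ij,rs}\eta_{ij}\eta_{rs}\le\frac{n-k-1}{n-k}\,\frac{(F^{ij}\eta_{ij})^2}{F},
\]
which comes from the concavity of $F^{1/(n-k)}$ on the positive cone. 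The Euler-type identities $F^{ii}\lambda_i=(n-k)f$ and $F^{ii}\lambda_i=F\sigma_k(\lambda|i)/\sigma_k$ will control the zero-order contributions. Substituting $u_{11i}/\lambda_1=\beta u_i/(2-u)-au_i\lambda_i$ from $W_i=0$ converts the third-order sum $\sum_i F^{ii}u_{11i}^2/\lambda_1^2$ into a combination of $a^2\sum F^{ii}\lambda_i^2$, $\beta^2(2-u)^{-2}\sum F^{ii}$, and lower-order terms, yielding a differential inequality whose leading behavior in $\lambda_1$ must be extracted.

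The main technical obstacle is that the natural coercive quantity $\sum F^{ii}\lambda_i^2$ degenerates in the direction $i=1$: because $F^{11}\lambda_1=F\sigma_k(\lambda|1)/\sigma_k$ can be much smaller than $F$ when $\lambda_1$ dominates, the contribution $F^{11}\lambda_1^2$ grows only linearly in $\lambda_1$, so the Chou--Wang argument for $\sigma_k$ does not transfer directly. I plan to split into cases by the spectral ratio $\lambda_n/\lambda_1$: in the uniformly elliptic regime $\lambda_n\ge\delta\lambda_1$, all $F^{ii}$ are comparable and the bad terms are absorbed into $a\sum F^{ii}\lambda_i^2$ after choosing $a$ small; in the degenerate regime $\lambda_n<\delta\lambda_1$, the constraint $\sigma_n=f\sigma_k$ forces $\lambda_{k+1}\cdots\lambda_n\ge\min f$ and produces coercive contributions from the smaller eigenvalues in $\sum F^{ii}\lambda_i^2$, which combined with the cutoff term $\beta F^{11}\lambda_1^2/(2-u)^2$ dominate the bad contributions once $\beta$ is taken large relative to $a$ and $\delta$ small relative to both. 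Closing the inequality in both regimes yields $\lambda_1(x_0)(2-u(x_0))^\beta\le C$, completing the estimate on $\overline{B}_\tau$.
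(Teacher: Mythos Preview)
Your proposal has a genuine gap: the Pogorelov test-function approach you outline cannot close for the Hessian quotient operator, and the case split on $\lambda_n/\lambda_1$ does not rescue it.

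The crux is your coercive term. For $F=\sigma_n/\sigma_k$ one computes, using $F^{ii}\lambda_i=F\sigma_k(\lambda|i)/\sigma_k$ and the identities in Lemma~\ref{Sigma_k-Lemma-0},
\[
\sum_i F^{ii}\lambda_i^2=\frac{F}{\sigma_k}\sum_i\lambda_i\sigma_k(\lambda|i)=(k+1)\,\frac{\sigma_n\sigma_{k+1}}{\sigma_k^2}.
\]
Along any configuration with $\lambda_1\to\infty$ consistent with $\sigma_n/\sigma_k=f$, this quantity stays \emph{bounded} (e.g.\ for $\lambda_1=t$, $\lambda_2=\cdots=\lambda_n=s$ with $s^{n-k}\sim f$, it is $\sim f\cdot s$). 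So $a\sum F^{ii}\lambda_i^2$ contributes no term growing in $\lambda_1$; this is exactly the obstruction the paper flags in the introduction (and in \cite{SUW}). Your claimed ``cutoff term $\beta F^{11}\lambda_1^2/(2-u)^2$'' does not arise from the second derivatives of $\beta\log(2-u)$, and even if such a factor appeared, $F^{11}\lambda_1^2=F\lambda_1\sigma_k(\lambda|1)/\sigma_k\sim F\sigma_k(\lambda|1)/\sigma_{k-1}(\lambda|1)$ is again bounded as $\lambda_1\to\infty$. Meanwhile the negative contributions $-\beta\sum F^{ii}u_i^2/(2-u)^2$ and the substituted third-order piece scale with $\sum F^{ii}$, which can blow up like $1/\lambda_n$. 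In your degenerate regime $\lambda_n<\delta\lambda_1$ you therefore have unbounded negative terms and only bounded positive ones; no choice of $a,\beta,\delta$ closes the inequality. The standard concavity of $F^{1/(n-k)}$ you invoke only yields $-F^{ij,rs}\eta_{ij}\eta_{rs}\ge -\tfrac{n-k-1}{n-k}\,(F^{ij}\eta_{ij})^2/F$, i.e.\ a bounded lower bound, and provides no extra growth either.

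The paper takes an entirely different route. The missing ingredient is an a~priori \emph{lower} bound on $\lambda_n$ inside $\Sigma_1$: via the Legendre transform the equation becomes $\sigma_{n-k}(D^2w)=1/f$ with the boundary condition $u^*=0$, and a Pogorelov-type argument on this Hessian equation (Lemma~\ref{Pogolev-1}) bounds the largest eigenvalue of $D^2w$, hence $\lambda_n\ge C^{-1}$. With this in hand, $b=\ln\lambda_1$ satisfies a Jacobi inequality $\sum F^{ii}b_{ii}\ge c\sum F^{ii}b_i^2-C$ (using the Guan--Sroka refined concavity, Lemma~\ref{Concavity lemma}), which under Legendre transform becomes a uniformly elliptic subsolution inequality; a mean-value inequality then bounds $b$ by $\int b\sigma_k$, and repeated integration by parts reduces the latter to a universal constant. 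None of these steps is captured by your maximum-principle scheme.
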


As an application, we will show that $C^{1,\alpha}$ or $W^{2,p}$ viscosity solutions are regular by following the work of Urbas \cite{Urbas88}, and Collins and Mooney \cite{Mooney-Collins} on the Monge-Amp\`ere equation. 

Let us recall the definition of a viscosity solution, see for instance in \cite{CIL}.
\begin{defi}
Let $n\geq 2$, $1\leq k<n$ and let $\Omega$ be a bounded domain. Let $f\in C^0(\Omega)$ be a positive function and let $u\in C^0(\Omega)$ be a locally convex function.  We say $u$ is a viscosity subsolution of 
\begin{align}\label{viscosity}
\frac{\sigma _n}{\sigma _k}\left(D^{2}u\right) =f(x),\quad in\quad \Omega,
\end{align}
if for any $x_0\in \Omega $, $\varphi \in C^2( \Omega)$ satisfying
\begin{align*}
\varphi(x_0)=u(x_0),\quad and\quad \varphi\geq u,\quad in\quad\Omega,
\end{align*}
we have
\begin{equation*}
\frac{\sigma _n}{\sigma _k}\left(D^2\varphi(x_0)\right) \geq  f(x_0).
\end{equation*}

Similarly, we say $u$ is a viscosity supersolution of (\ref{viscosity}) if for any $x_0\in \Omega $, $\varphi \in C^2(\Omega)$ satisfying
\begin{align*}
\varphi(x_0)=u(x_0),\quad and\quad \varphi\leq u,\quad in\quad\Omega,
\end{align*}
we have
\begin{equation*}
\frac{\sigma _n}{\sigma _k}\left(D^2\varphi(x_0)\right) \leq  f(x_0).
\end{equation*}

We say $u$ is a viscosity solution of (\ref{viscosity}) if $u$ is both a viscosity subsolution and a viscosity supersolution of (\ref{viscosity}).
\end{defi}

\begin{theo}
\label{theorem-v}
Let $n\geq 4$, $1\leq k\leq n-3$ and let $\Omega$ be a bounded domain. Let $f\in C^{1,1}\left( \overline{\Omega}\right) $ be a positive function and let $u$ be a viscosity solution of (\ref{viscosity}). Suppose one of the following holds:
\begin{enumerate}
\item $u\in C^{1,\alpha }\left( \overline{\Omega}\right) $ for some $\alpha >1-\frac{2}{n-k}$;
\item $u\in W^{2,p}( \Omega ) $ for some $p\geq\frac{( n-1) ( n-k) }{2}$.
\end{enumerate}

Then $u\in C^{3,\beta }( \Omega) $ for any $0<\beta <1 $. Moreover, for any $\Omega^\prime\Subset \Omega$ , we have
\begin{equation*}
\| u\| _{C^{3,\beta }\left( \overline{\Omega}^\prime\right) }\leq C,  
\end{equation*}
where $C$ is a positive constant depending only on $n, k,\alpha ,\beta ,\Omega^\prime,\Omega ,\min_{\overline{\Omega}}f,\|f\| _{C^{1,1}\left( \overline{\Omega }\right)}$ and $\| u\| _{C^{1,\alpha}\left( \overline{\Omega}\right) }$ in case $(1)$, and on $n,k,p,\beta ,\Omega^\prime,\Omega ,\min_{\overline{\Omega}}f,\|f\| _{C^{1,1}\left( \overline{\Omega }\right) }$ and $\| u\| _{W^{2,p}( \Omega) }$ in case $(2)$.
\end{theo}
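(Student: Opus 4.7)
The plan is to follow the Urbas--Collins--Mooney program for Monge-Amp\`ere, substituting Theorem \ref{theorem-1} for Pogorelov's classical estimate. The argument splits into three stages: strict convexity, approximation, and bootstrap.

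\emph{Strict convexity.} Under either hypothesis I would first show that $u$ is strictly convex at every interior point, in the sense that any supporting affine function touches the graph of $u$ at exactly one point locally. Suppose this fails at some $x_0\in\Omega$; then after subtracting the supporting affine function and translating we may assume the contact set $\{u=0\}$ contains a line segment $S$ through $x_0$. Along $S$ we have $Du=0$, so hypothesis $(1)$ yields the quantitative bound $u(x)\le C\,\mathrm{dist}(x,S)^{1+\alpha}$ near $S$, while hypothesis $(2)$ yields an $L^p$-analogue for $D^2u$. The equation $\sigma_n/\sigma_k(D^2u)=f\ge c_0>0$, rewritten as $\sigma_n=f\sigma_k$, forces $D^2u$ to be sufficiently nondegenerate in the $n-1$ directions transverse to $S$. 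Integrating this identity over a thin tube around $S$ and using the Monge-Amp\`ere identity $\int\sigma_n(D^2u)=|Du(\cdot)|$, one matches the upper bound on $|Du|$ from the regularity hypothesis against the lower bound on $\sigma_k$ from transverse nondegeneracy; the two sides balance precisely at the thresholds $\alpha=1-\tfrac{2}{n-k}$ and $p=\tfrac{(n-1)(n-k)}{2}$. The appearance of $n-k$ reflects the homogeneity degree of $\sigma_n/\sigma_k$, replacing the $n$ of Monge-Amp\`ere.

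\emph{Approximation and interior $C^2$ bound.} Once strict convexity is in hand, for every $x_0\in\Omega'\Subset\Omega$ the section $\Sigma_h(x_0)=\{u<\ell_{x_0}+h\}$ is compactly contained in $\Omega$ for some $h>0$ uniform in $x_0$. I would approximate $u-\ell_{x_0}$ by smooth convex functions $u_\varepsilon$ obtained from the Dirichlet problem $\sigma_n/\sigma_k(D^2u_\varepsilon)=f_\varepsilon$ on $\Sigma_h(x_0)$ with mollified $f_\varepsilon$ and boundary data smoothed by $\varepsilon$; standard stability for the Hessian quotient equation gives $u_\varepsilon\to u-\ell_{x_0}$ locally uniformly. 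Applying Theorem \ref{theorem-1} (after the affine rescaling identifying $\Sigma_h(x_0)$ with $\Sigma_2$) yields $|D^2u_\varepsilon|\le C$ on $\overline{B}_\tau(x_0)$, uniformly in $\varepsilon$. Combined with $\sigma_n/\sigma_k=f\ge c_0$, this pinches the eigenvalues of $D^2u_\varepsilon$ between positive constants, so the linearized operator is uniformly elliptic. Because $(\sigma_n/\sigma_k)^{1/(n-k)}$ is concave on the positive cone, Evans--Krylov yields uniform $C^{2,\beta_0}$ bounds, and the standard Schauder bootstrap (differentiating the equation and iterating) gives $C^{3,\beta}$ for any $\beta\in(0,1)$. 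Passing $\varepsilon\to 0$ and covering $\Omega'$ by finitely many such balls produces the stated estimate.

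\emph{Main obstacle.} The hardest step is the strict-convexity claim. Unlike for Monge-Amp\`ere, where strict convexity is automatic from a pinched right-hand side (Caffarelli), the counterexamples in \cite{Lu24} show that the $C^{1,\alpha}$ or $W^{2,p}$ hypothesis here must be used essentially, and the contradiction has no slack: one must match the sharp homogeneity degree $n-k$ of the equation against the transverse growth rate of $u$ dictated by the regularity hypothesis. Secondary technical issues are ensuring that the approximants $u_\varepsilon$ inherit strict convexity with quantitatively uniform bounds, and that their sections are geometrically comparable to those of $u$, so that Theorem \ref{theorem-1} applies with constants independent of $\varepsilon$.
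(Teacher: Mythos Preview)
Your three-stage outline (strict convexity $\Rightarrow$ smooth approximation $\Rightarrow$ Theorem~\ref{theorem-1} $+$ Evans--Krylov) matches the paper exactly, and the bootstrap portion is fine. But two of the stages, as you describe them, have genuine gaps.

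\textbf{Strict convexity.} The mechanism you sketch---integrating the Alexandrov identity $\int\sigma_n(D^2u)=|Du(\cdot)|$ over a thin tube around the contact segment and balancing against a ``lower bound on $\sigma_k$ from transverse nondegeneracy''---is not how the sharp thresholds arise, and a direct calculation along those lines does not produce $\alpha>1-\tfrac{2}{n-k}$. For case~(1) and for the open range $p>\tfrac{(n-1)(n-k)}{2}$ of case~(2) the paper uses a \emph{barrier} argument \`a la Urbas: after normalization one bounds $u(x',0)\le M|x'|^{1+\alpha}$ and $u(x',\rho)\le M|x'|^{1+\alpha}+\lambda$, converts these via Young's inequality to $A|x'|^2+C_0A^{-\gamma}$ with $\gamma=\tfrac{1+\alpha}{1-\alpha}$, and compares $u$ to $w=A|x'|^2+C_0A^{-\gamma}+\tfrac{\lambda x_n}{\rho}+Bx_n(x_n-\rho)$ with $B\sim A^{-(n-k-1)}$; the condition $\gamma>n-k-1$ is exactly $\alpha>1-\tfrac{2}{n-k}$. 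The $W^{2,p}$ open case reduces to this by Fubini $+$ Sobolev on good slices. The borderline $p=\tfrac{(n-1)(n-k)}{2}$ requires a genuinely different argument (Collins--Mooney): a John-ellipsoid comparison yields the growth $\sup_{|x'|=r}u\ge c\,r^{2-2/(n-k)}$, and then a slicing lemma forces $\int(\Delta u)^{(n-1)(n-k)/2}\ge c|\log r|$, contradicting $u\in W^{2,p}$. None of this is an integration of $\sigma_n=f\sigma_k$ over a tube.

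\textbf{Approximation.} You propose to solve $\sigma_n/\sigma_k(D^2u_\varepsilon)=f_\varepsilon$ on the section $\Sigma_h(x_0)$ itself. But $\partial\Sigma_h(x_0)$ is only $C^{1,\gamma}$ a priori, so Trudinger's existence theorem does not apply there; and even if you approximate the domain, the level sets of $u_\varepsilon$ (which are what Theorem~\ref{theorem-1} actually needs) will not coincide with the prescribed domain. The paper avoids this by solving on a fixed ball $B_{2r}$ with smooth boundary data $\varphi_m\to u$ and $f_m\to f$, then sandwiching $u_m\ge u\ge u_m+\tfrac{M}{2m}(|x|^2-4r^2)-\tfrac{1}{m}$ via the comparison principle (Lemma~\ref{compare 1}) to force $u_m\to u$ uniformly. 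The strict convexity is applied to $u$, not to $u_m$: from $\min_{\partial B_r}u\ge 2\epsilon$ and uniform convergence one gets $\Sigma^m_\epsilon\Subset B_r$ for the sections of the normalized $\bar u_m$, and only then is Theorem~\ref{theorem-1} invoked. This also dissolves your ``secondary technical issue'' about the approximants inheriting strict convexity---it is never needed.
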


\begin{rema}
\begin{enumerate}
\item We remark that the conditions on $\alpha$ and $p$ are sharp, in view of the example in \cite{Lu24}.
\item Previously, case $(2)$ was proved by Bao, Chen, Guan and Ji \cite{BCGJ} under the non-sharp condition $p>(n-1)(n-k)$. For related works, see \cite{BC,LB}.
\end{enumerate}
\end{rema}

\begin{rema}
For Monge-Amp\`ere equation, Theorem \ref{theorem-v} is proved by Urbas \cite{Urbas88} for $\alpha>1-\frac{2}{n}$ and $p>\frac{n(n-1)}{2}$. The borderline case $p=\frac{n(n-1)}{2}$ was proved by Collins and Mooney \cite{Mooney-Collins}.
\end{rema}

The classic way to prove Pogorelov type interior $C^2$ estimate is to choose a suitable test function, and to obtain the desired estimate via maximum principle. Since $u$ is constant on the boundary, it is standard to choose $u$ as the cut-off function. This strategy works out perfectly for the Monge-Amp\`ere equation as well as the Hessian equation.

However, such method does not work for the Hessian quotient equation. A key difference is that for Hessian equation, the terms $\sum_i\sigma_k^{ii}$ and $\sum_i\sigma_k^{ii}u_{ii}^2$ will generate a positive term involving $\lambda_1$, where $\lambda_1$ is the largest eigenvalue of $D^2u$. These two terms are the major terms to control the negative terms generating from the cut off function. In contrast, for Hessian quotient equation, the terms $\sum_iF^{ii}$ and $\sum_iF^{ii}u_{ii}^2$ will not generate a positive term involving $\lambda_1$. Therefore, we can not control the negative terms generating from the cut off function by these two terms. Interested readers may refer to \cite{SUW} for detailed discussions.

Instead, we will adopt the method in \cite{Lu24}, which is inspired by the work of Shankar and Yuan \cite{SY-20}. Let $b=\ln \lambda _1$, where $\lambda _1$ is the largest eigenvalue of $D^2u$. It suffices to bound $b$ in the interior of $\Sigma_2$.

The first step is to prove a Jacobi inequality for $b$. The second step is to prove a mean value inequality to bound $b$ via its integral using Legendre transform. The third step is to estimate the integral via integration by parts. 

For step one, the main difficulty is to prove a concavity inequality for Hessian quotient operator. This was proved recently by Guan and Sroka \cite{GS}. We are very fortunate to be able to use their result here.

Our major difficulty occurs in step two and three. When $k=n-1, n-2$, using Legendre transform, one can easily check that $b$ is a subsolution of a uniformly elliptic equation. Consequently, we can use mean value inequality to bound $b$ via its integral, see in \cite{Lu24}. However, for general $k$, this is not true without extra conditions.

What saves us is the homogeneous boundary condition. Our key observation is that after applying the Legendre transform, we obtain a Hessian equation with the boundary condition $u^*=0$. Although this is not the usual homogeneous boundary condition, we manage to obtain a Pogorelov type interior $C^2$ estimate in this setting. This estimate implies that $b$ is a subsolution of a uniformly elliptic equation. It also implies the eigenvalue of $D^2u$ is uniformly bounded below. See Lemma \ref{Pogolev-1} for details. Both properties are crucial in step two and three. We would like to remark that lower bound estimate is usually much harder to obtain. Our method provides a new way to obtain such estimate.

We would like to point out that the boundary condition is necessary in our theorem. In fact, the pure interior $C^2$ estimate fails for $k\leq n-3$, see example in \cite{Lu24}.

\medskip

Before we end the introduction, let us briefly mention the history of the pure interior $C^2$ estimate for Hessian and Hessian quotient equations.

For the Hessian equation, Urbas \cite{Urbas90} showed that the pure interior $C^2$ estimate fails for the Hessian equation $\sigma _k=f$ for $k\geq 3$ by generalizing Pogorelov's result. The remaining case $\sigma _2=f$ turns out to be extremely difficult. In a breakthrough \cite{WY09}, Warren and Yuan proved an interior $C^2$ estimate for $\sigma _2=1$ for $n=3$. Recently, Qiu \cite{Q1,Q2} obtained an interior $C^2$ estimate for $\sigma _2=f$ for $n=3$. Most recently, in a cutting-edge paper \cite{SY-25}, Shankar and Yuan established an interior $C^2$ estimate for $\sigma_2=1$ for $n=4$. For general dimensions, under various convexity assumptions, interior $C^2$ estimate for $\sigma_2$ equation was proved by Guan and Qiu \cite{GQ}, McGonagle, Song and Yuan \cite{MSY}, Mooney \cite{Mooney}, and Shankar and Yuan \cite{SY-20,SY-25}. See also recent progress \cite{CJZ, Zhou}. We would like to mention that the interior $C^2$ estimate for $\sigma_2$ equation remains open for $n\geq 5$.

For Hessian quotient equation, interior $C^2$ estimate for $\frac{\sigma _3}{\sigma _1}=f$ in dimension $3$ and $4$ was obtained by Chen, Warren and Yuan \cite{CWY}, Wang and Yuan \cite{WdY14}, and Zhou \cite{Zhou} using special Lagrangian structure of the equation. In recent papers \cite{Lu23, Lu24}, the first author proved an interior $C^2$ estimate for $\frac{\sigma _n}{\sigma _k}=f$ for $n-k\leq 2$ via Legendre transform. For general Hessian quotient equation $\frac{\sigma_k}{\sigma_l}=f$, the first author \cite{Lu24} constructed a singular solution for $k-l\geq 3$ by extending Pogorelov's example. We would like to mention that interior $C^2$ estimate for the general Hessian quotient equation $\frac{\sigma _k}{\sigma _l}=f$ is largely open when $k-l\leq 2$ and $k\leq n-1$.

\medskip

The organization of the paper is as follows. In Section 2, we will collect some basic properties of the Hessian and Hessian quotient operators as well as Legendre transform. In Section 3, we will state some important lemmas, which play essential roles in the a priori estimates. In Section 4, we will prove the strict positivity of the Hessian. This is achieved by studying the equation via Legendre transform. This is a key ingredient of our proof. In Section 5, we will perform a Legendre transform to bound $b$ by its integral. In Section 6, we will complete the proof of Theorem \ref{theorem-1} via integration by parts. In Section 7, we will prove strict convexity of viscosity solution. In Section 8, we will prove Theorem \ref{theorem-v}.

\section{Preliminaries}

In this section, we will collect some basic formulas for Hessian and Hessian quotient equations.

\subsection{Hessian and Hessian quotient operators}

In this subsection, we will collect some basic properties of Hessian and Hessian quotient operators.

Let $\lambda=(\lambda_1,\cdots,\lambda_n)\in \mathbb{R}^n$, we will denote 
\begin{align*}
(\lambda|i)=(\lambda_1,\cdots,\lambda_{i-1},\lambda_{i+1},\cdots,\lambda_n)\in \mathbb{R}^{n-1},
\end{align*}
i.e. $(\lambda|i)$ is the vector obtained by deleting the $i$-th component of the vector $\lambda$. Similarly, $(\lambda|ij)$ is the vector obtained by deleting the $i$-th and $j$-th components of the vector $\lambda$.

We now collect some basic formulas for Hessian operator, see for instance \cite{LT}.

\begin{lemm}
\label{Sigma_k-Lemma-0} For any $\lambda=(\lambda_1,\cdots,\lambda_n)\in 
\mathbb{R}^n$, we have 
\begin{align*}
\sigma_k(\lambda)=\lambda_i\sigma_{k-1}(\lambda|i)+\sigma_k(\lambda|i),\quad \sum_i\sigma_{k}(\lambda|i)&\ =(n-k)\sigma_{k}(\lambda), \\
\sum_i\lambda_i\sigma_{k-1}(\lambda|i)= k\sigma_k(\lambda),\quad \sum_i\lambda_i^2\sigma_{k-1}(\lambda|i)=&\ \sigma_1\sigma_k-(k+1)\sigma_{k+1}.
\end{align*}
\end{lemm}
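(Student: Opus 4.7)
The plan is to verify each of the four identities via a direct combinatorial argument on the monomials of the elementary symmetric polynomials, since no analytic machinery is needed.

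\textbf{Step 1 (first identity).} I would decompose the set of $k$-element subsets of $\{1,\dots,n\}$ according to whether they contain the index $i$ or not. Those containing $i$ contribute precisely $\lambda_i \sigma_{k-1}(\lambda|i)$, while those missing $i$ contribute exactly $\sigma_k(\lambda|i)$. Summing gives the recursion $\sigma_k(\lambda)=\lambda_i\sigma_{k-1}(\lambda|i)+\sigma_k(\lambda|i)$.

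\textbf{Step 2 (second identity).} Next I would sum $\sigma_k(\lambda|i)$ over $i$. Each monomial $\lambda_{j_1}\cdots\lambda_{j_k}$ in $\sigma_k(\lambda)$ appears in $\sigma_k(\lambda|i)$ exactly when $i\notin\{j_1,\dots,j_k\}$, and there are $n-k$ such indices $i$. Hence the total is $(n-k)\sigma_k(\lambda)$.

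\textbf{Step 3 (third identity).} A similar counting argument handles $\sum_i\lambda_i\sigma_{k-1}(\lambda|i)$: each $k$-monomial $\lambda_{j_1}\cdots\lambda_{j_k}$ is produced exactly $k$ times, once for each choice of $i\in\{j_1,\dots,j_k\}$. This yields $k\sigma_k(\lambda)$. (Alternatively this follows at once by summing the first identity over $i$ and using the second identity.)

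\textbf{Step 4 (fourth identity).} Finally I would expand $\sigma_1\sigma_k=\sum_j\lambda_j\sigma_k(\lambda)$ and group terms by the resulting $(k+1)$-tuple of indices. A monomial $\lambda_{i_1}\cdots\lambda_{i_{k+1}}$ with all $i_s$ distinct arises in $k+1$ ways (pick any of its factors to play the role of $\lambda_j$), contributing $(k+1)\sigma_{k+1}(\lambda)$. The remaining contributions come from pairing $\lambda_j$ with a $k$-subset that already contains $j$, which produces exactly $\lambda_j^2$ times a product of $k-1$ other variables; summed, these give $\sum_j\lambda_j^2\sigma_{k-1}(\lambda|j)$. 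Rearranging gives the claim. There is no genuine obstacle in this lemma; the only mild subtlety is being careful with the double-counting in Step 4, which is easily controlled by separating diagonal from off-diagonal index tuples.
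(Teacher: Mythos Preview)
Your combinatorial verification is correct and entirely standard. Note that the paper does not actually prove this lemma: it is stated as a collection of well-known formulas for the elementary symmetric functions with a reference to \cite{LT}, so there is no ``paper's own proof'' to compare against beyond that citation. Your argument is exactly the kind of elementary double-counting one finds in such references.
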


For simplicity, we will use $\sigma_k(|i)$ to denote $\sigma_k(\lambda|i)$ if there is no ambiguity. Similarly, we will use $\sigma_k(|ij)$ to denote $\sigma_k(\lambda|ij)$.

\medskip

Define Garding's $\Gamma_k$ cone by 
\begin{align*}
\Gamma_k=\{\lambda\in \mathbb{R}^n|\sigma_j(\lambda)>0,1\leq j\leq k\}.
\end{align*}

We say $D^2u\in \Gamma_k$ if $\lambda\in \Gamma_k$, where $\lambda=(\lambda_1,\cdots,\lambda_n)$ are the eigenvalues of $D^2u$.

\medskip

Let $F$ be an operator defined on $\lambda$, where $\lambda=(\lambda_1,\cdots,\lambda_n)$ are the eigenvalues of $D^2u$. Then we can extend $F$ such that it is defined on $D^2u$ and we denote 
\begin{align*}
F^{pq}=\frac{\partial F}{\partial u_{pq}},\quad F^{pq,rs}=\frac{\partial^2F}{\partial u_{pq}\partial u_{rs}}.
\end{align*}

We have the following basic properties of the Hessian operator,

\begin{lemm}\label{F^ijkl-1} 
Let $n\geq 2$, $1< k\leq n$ and let $D^2u\in \Gamma_k$. Suppose $D^2u$ is diagonalized at $x_0$. Then at $x_0$, we have 
\begin{align*}
\sigma_k^{pq}=\sigma_k^{pp}\delta_{pq},\quad \sigma_k^{pq,rs}= 
\begin{cases}
\sigma_k^{pp,rr},\quad & p=q, r=s,p\neq r, \\ 
\sigma_k^{pq,qp}, & p=s,q=r,p\neq q, \\ 
0, & otherwise.
\end{cases}
\end{align*}

In particular, for all $p\neq q$, we have $-\sigma_k^{pq,qp}>0$ and 
\begin{align*}
-\sigma_k^{pq,qp}=\frac{\sigma_k^{pp}-\sigma_k^{qq}}{\lambda_q-\lambda_p},\quad\lambda_p\neq \lambda_q,
\end{align*}
where $\lambda_p$ and $\lambda_q$ are the eigenvalues of $D^2u$.
\end{lemm}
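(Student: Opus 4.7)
The plan is to view $\sigma_k(U)$ as a polynomial in the independent matrix entries $\{U_{pq}\}$ via
\[
\sigma_k(U)=\sum_{|I|=k}\det U_I=\sum_{|I|=k}\sum_{\pi\in S_I}\operatorname{sgn}(\pi)\prod_{i\in I}U_{i,\pi(i)}.
\]
Each entry $U_{pq}$ appears with degree at most one in every monomial, so every question about $\sigma_k^{pq}$ and $\sigma_k^{pq,rs}$ at a diagonal $U=\operatorname{diag}(\lambda_1,\dots,\lambda_n)$ reduces to a combinatorial one: after erasing the differentiation variables, which permutations $\pi\in S_I$ leave only diagonal factors in the surviving monomial?

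For the first derivative I would pick out the monomials containing $U_{pq}$ and evaluate the remainder on the diagonal. When $p=q$ the surviving contribution sums to $\sigma_{k-1}(\lambda|p)$. When $p\neq q$, the condition $\pi(p)=q$ forces $\pi(q)\neq q$, so the $q$-factor of the surviving monomial is off-diagonal and vanishes at $U$. This gives $\sigma_k^{pq}=\sigma_k^{pp}\delta_{pq}$.

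For $\sigma_k^{pq,rs}$ I would apply the same bookkeeping with two constraints, $\pi(p)=q$ and $\pi(r)=s$. Only two index patterns allow the remainder of $\pi$ to contribute at the diagonal. Case (i), $p=q$, $r=s$, $p\neq r$: the constraints force $\pi$ to be the identity on $I$, and summing over $I\supset\{p,r\}$ of size $k$ produces $\sigma_k^{pp,rr}=\sigma_{k-2}(\lambda|pr)$. Case (ii), $p=s$, $q=r$, $p\neq q$: $\pi$ must be the transposition $(pq)$ on $I$ times the identity elsewhere, which carries sign $-1$ and sums to $\sigma_k^{pq,qp}=-\sigma_{k-2}(\lambda|pq)$. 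All other patterns — including $p=q=r=s$ (killed by linearity of each monomial in $U_{pp}$), mixed types $\sigma_k^{pp,rs}$ with $r\neq s$, and $\sigma_k^{pq,rs}$ with $\{p,q\}\cap\{r,s\}\neq\emptyset$ but $\{p,q\}\neq\{r,s\}$ — force at least one further nontrivially moved index whose surviving factor is off-diagonal and hence zero.

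Combining $\sigma_k^{pq,qp}=-\sigma_{k-2}(\lambda|pq)$ with Lemma \ref{Sigma_k-Lemma-0} applied twice,
\[
\sigma_k^{pp}-\sigma_k^{qq}=\sigma_{k-1}(\lambda|p)-\sigma_{k-1}(\lambda|q)=(\lambda_q-\lambda_p)\,\sigma_{k-2}(\lambda|pq),
\]
yields the quotient identity when $\lambda_p\neq\lambda_q$. Positivity of $-\sigma_k^{pq,qp}=\sigma_{k-2}(\lambda|pq)$ for $D^2u\in\Gamma_k$ follows from the standard fact that deleting two coordinates of a $\Gamma_k$-vector leaves a $\Gamma_{k-2}$-vector. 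The main hurdle is the \emph{otherwise} case in the second-derivative analysis: each mixed index pattern needs its own short permutation argument to rule out contributions, which is elementary but easy to mishandle if one does not exhaust the cases carefully.
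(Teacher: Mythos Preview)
Your argument is correct. The paper does not actually prove this lemma; it is stated as a standard fact with an implicit reference to the literature (Lin--Trudinger), so there is no ``paper's own proof'' to compare against. Your direct combinatorial derivation from the minor expansion $\sigma_k(U)=\sum_{|I|=k}\det U_I$ is one of the standard ways to establish these identities, and the key steps---linearity of each monomial in every entry, the reduction to permutations that are the identity off $\{p,r\}$, and the two surviving cases $\pi=\mathrm{id}$ and $\pi=(p\,q)$---are all sound. The closing computations for the quotient identity and the positivity of $\sigma_{k-2}(\lambda|pq)$ on $\Gamma_k$ are also correct. One minor remark: your enumeration of the ``otherwise'' cases is not literally exhaustive as written (for instance, $\sigma_k^{pq,rs}$ with $p\neq q$, $r\neq s$, and $\{p,q\}\cap\{r,s\}=\emptyset$ is not on your list), but the mechanism you describe---any such pattern forces an additional index $j\in I\setminus\{p,r\}$ with $\pi(j)\neq j$, hence an off-diagonal surviving factor---handles every remaining case uniformly, so the gap is purely expository.
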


We also have the following basic properties of the Hessian quotient operator, see \cite{Lu24}

\begin{lemm}
\label{F^{ijkl}} Let $n\geq 2$, let $1\leq k<n$, let $F=\frac{\sigma_n}{\sigma_k}$ and let $u$ be a convex function. Suppose $D^2u$ is diagonalized at $x_0$. Then at $x_0$, we have 
\begin{align*}
F^{pq}=\left(\frac{\sigma_n^{pp}}{\sigma_k}-\frac{\sigma_n\sigma_k^{pp}}{\sigma_k^2}\right)\delta_{pq},
\end{align*}
\begin{align*}
F^{pq,rs}= 
\begin{cases}
-2\frac{\sigma_n^{pp}\sigma_k^{pp}}{\sigma_k^2}+2\frac{\sigma_n\left(\sigma_k^{pp}\right)^2}{\sigma_k^3},\quad & p=q=r=s, \\ 
\frac{\sigma_n^{pp,rr}}{\sigma_k}-\frac{\sigma_n^{pp}\sigma_k^{rr}}{\sigma_k^2}-\frac{\sigma_n^{rr}\sigma_k^{pp}}{\sigma_k^2}-\frac{\sigma_n\sigma_k^{pp,rr}}{\sigma_k^2}+2\frac{\sigma_n\sigma_k^{pp}\sigma_k^{rr}}{\sigma_k^3}, & p=q,r=s,p\neq r, \\ 
\frac{\sigma_n^{pq,qp}}{\sigma_k}-\frac{\sigma_n\sigma_k^{pq,qp}}{\sigma_k^2}, & p=s,q=r,p\neq q, \\ 
0, & otherwise.
\end{cases}
\end{align*}
In particular, for all $p\neq q$, we have $-F^{pq,qp}>0$ and 
\begin{align*}
-F^{pq,qp}=\frac{F^{pp}-F^{qq}}{\lambda_q-\lambda_p},\quad\lambda_p\neq\lambda_q,
\end{align*}
where $\lambda_p$ and $\lambda_q$ are the eigenvalues of $D^2u$.
\end{lemm}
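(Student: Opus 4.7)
The plan is to compute $F^{pq}$ and $F^{pq,rs}$ directly via the quotient rule applied to $F = \sigma_n/\sigma_k$, and then to specialize the result at the diagonalized point using Lemma \ref{F^ijkl-1} separately for $\sigma_n$ and for $\sigma_k$. For the first derivatives, the quotient rule gives $F^{pq} = \sigma_n^{pq}/\sigma_k - \sigma_n\sigma_k^{pq}/\sigma_k^2$, and at the diagonalized point Lemma \ref{F^ijkl-1} yields $\sigma_n^{pq} = \sigma_n^{pp}\delta_{pq}$ and $\sigma_k^{pq} = \sigma_k^{pp}\delta_{pq}$, which gives the claimed formula for $F^{pq}$.

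For the second derivatives, differentiating once more produces the general identity
\[
F^{pq,rs} \;=\; \frac{\sigma_n^{pq,rs}}{\sigma_k} \,-\, \frac{\sigma_n^{pq}\sigma_k^{rs}}{\sigma_k^2} \,-\, \frac{\sigma_n^{rs}\sigma_k^{pq}}{\sigma_k^2} \,-\, \frac{\sigma_n\,\sigma_k^{pq,rs}}{\sigma_k^2} \,+\, 2\,\frac{\sigma_n\,\sigma_k^{pq}\sigma_k^{rs}}{\sigma_k^3},
\]
valid off the diagonalized point. Substituting the structural vanishings from Lemma \ref{F^ijkl-1} for both $\sigma_n$ and $\sigma_k$ splits the result into four cases. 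When $p=q=r=s$, both $\sigma_n^{pp,pp}$ and $\sigma_k^{pp,pp}$ vanish (each of $\sigma_n, \sigma_k$ is linear in each individual $\lambda_p$), leaving only the two cross terms $-2\sigma_n^{pp}\sigma_k^{pp}/\sigma_k^2 + 2\sigma_n(\sigma_k^{pp})^2/\sigma_k^3$. When $p=q$, $r=s$, $p\neq r$, all five terms can contribute and reproduce the displayed expression. When $p=s$, $q=r$, $p\neq q$, the mixed first-derivative factors vanish since $\sigma_n^{pq} = \sigma_k^{pq} = 0$ for $p \neq q$, leaving only $\sigma_n^{pq,qp}/\sigma_k - \sigma_n\sigma_k^{pq,qp}/\sigma_k^2$. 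All other index patterns give zero by the same lemma.

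For the closing identity $-F^{pq,qp} = (F^{pp}-F^{qq})/(\lambda_q-\lambda_p)$ when $\lambda_p \neq \lambda_q$, I substitute the ratios $-\sigma_n^{pq,qp} = (\sigma_n^{pp}-\sigma_n^{qq})/(\lambda_q-\lambda_p)$ and $-\sigma_k^{pq,qp} = (\sigma_k^{pp}-\sigma_k^{qq})/(\lambda_q-\lambda_p)$ from Lemma \ref{F^ijkl-1} into the expression for $F^{pq,qp}$ and regroup using the formula for $F^{pp}$. For strict positivity $-F^{pq,qp} > 0$, a short simplification using $\sigma_n = \lambda_p\sigma_{n-1}(|p)$ and $\sigma_k = \lambda_p\sigma_{k-1}(|p)+\sigma_k(|p)$ gives $F^{pp} = \sigma_{n-1}(|p)\sigma_k(|p)/\sigma_k^2$; for a convex $u$ with (say) $\lambda_p > \lambda_q > 0$ one has $\sigma_{n-1}(|p) < \sigma_{n-1}(|q)$ together with $\sigma_k(|q) - \sigma_k(|p) = (\lambda_p-\lambda_q)\sigma_{k-1}(|pq) > 0$, so $F^{pp} < F^{qq}$, and dividing by the negative quantity $\lambda_q-\lambda_p$ produces the desired sign.

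\textbf{Main obstacle.} There is no serious obstacle here: the lemma is a routine symbolic identity that reduces cleanly to Lemma \ref{F^ijkl-1}. The only care required is bookkeeping -- tracking, in each of the four index patterns, which of the five terms in the expanded quotient-rule formula survive once the diagonalization identities $\sigma_n^{pq} = \sigma_n^{pp}\delta_{pq}$, $\sigma_k^{pq} = \sigma_k^{pp}\delta_{pq}$ and the vanishings $\sigma_n^{pp,pp} = \sigma_k^{pp,pp} = 0$ are imposed.
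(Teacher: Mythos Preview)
Your proposal is correct and is the natural direct computation; the paper itself does not give a proof but simply cites \cite{Lu24}, where presumably the same quotient-rule calculation is carried out. One small omission: your positivity argument for $-F^{pq,qp}$ assumes $\lambda_p\neq\lambda_q$, but the claim is stated for all $p\neq q$; the equal-eigenvalue case follows either by continuity or by the direct identity $-F^{pq,qp}=\sigma_{n-2}(|pq)\bigl[(\lambda_p+\lambda_q)\sigma_{k-1}(|pq)+\sigma_k(|pq)\bigr]/\sigma_k^2>0$.
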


\subsection{Legendre transform}

In this subsection, we will collect some basic formulas for Legendre transform.

Let $\Omega$ be a bounded convex domain in $\mathbb{R}^n$ and let $u(x)$ be a convex function in $\Omega$. Let $w(y)$ be the Legendre transform of the function $u(x)$. Then we have 
\begin{align}  \label{w-1}
(x,Du(x))=(Dw(y),y).
\end{align}

Note that $y(x)=Du(x)$ is a diffeomorphism. Let $\Omega^*$ be the domain of $y$, then $y$ maps $\partial \Omega$ to $\partial\Omega^*$.

By (\ref{w-1}), we have
\begin{align}  \label{w-2}
D^2w(y)=\frac{\partial x}{\partial y}=\left(\frac{\partial y}{\partial x}\right) ^{-1}=\left( D^2u(x)\right)^{-1}.
\end{align}

Let $n\geq 2$, $1\leq k<n$, let $F=\frac{\sigma_n}{\sigma_{k}}$ and let $G=\sigma_{n-k}$, then we have 
\begin{align}  \label{w-3}
G\left( D^2w(y)\right)=\sigma_{n-k}\left( \left( D^2u(x)\right)^{-1}\right)=\left(\frac{\sigma_n}{\sigma_{k}}\left( D^2u(x)\right)\right)^{-1}=\left(F\left( D^2u(x)\right)\right)^{-1}.
\end{align}

Suppose $D^2u$ is diagonalized at $x_0$. Then at $x_0$, by (\ref{w-2}) and (\ref{w-3}), we have 
\begin{align}  \label{g}
G^{ii}\left( D^2w\right)=F^{-2}F^{ii}\cdot w_{ii}^{-2}=F^{-2}F^{ii}u_{ii}^2.
\end{align}

Let $\varphi(x)$ be any function in $\Omega$. Denote $\varphi^*(y)=\varphi(x(y))$. Then at $x_0$, we have 
\begin{align*}
\frac{\partial \varphi}{\partial x_i}=\sum_j\frac{\partial y^j}{\partial x^i}\frac{\partial \varphi^*}{\partial y^j}=\sum_ju_{ij}\varphi^*_j,
\end{align*}
and 
\begin{align*}
\frac{\partial^2 \varphi}{\partial x_i^2}=\sum_j u_{iij}\varphi^*_j+\sum_{j,l}u_{ij}\varphi^*_{jl}u_{il}=\sum_j u_{iij}\varphi^*_j+u_{ii}^2 \varphi^*_{ii}.
\end{align*}

Together with (\ref{g}), at $x_0$, we have 
\begin{align}  \label{F^{ii}}
\sum_iF^{ii}\varphi_{ii}=&\ \sum_{i,j}F^{ii}u_{iij}\varphi^*_j+F^2\sum_iG^{ii}\varphi^*_{ii}, \\
\sum_iF^{ii}\varphi_i^2=&\ F^2\sum_iG^{ii}(\varphi^*_i)^2.  \notag
\end{align}

\section{Important Lemmas}

In this section, we will state several important lemmas, which are crucial to establish a priori estimates.

\subsection{Concavity inequality for Hessian operator}

In this subsection, we will state a concavity inequality for Hessian operator. The following lemma is a special case of Lemma 1.1 in \cite{RZhang} proved by Zhang. For the sake of completeness, we provide a self contained proof here. 

\begin{lemm}[Zhang \cite{RZhang}]\label{Concavity lemma-hessian} Let $n\geq 2$, $1< k<n$ and $\lambda_1\geq \cdots\geq \lambda_n>0$. Then for any $\xi\in \mathbb{R}^n$, there exists a positive constant $C$ depending only on $n,k$ and $\sigma_k$, such that if $\lambda_1>C$, then we have 
\begin{align*}
-\sum_{i\neq j}\sigma_k^{ii,jj}\xi_i\xi_j+\frac{2}{\sigma_k}\left(\sum_i\sigma_k^{ii}\xi_i\right)^2+\sum_{i\neq 1}\frac{\sigma_k^{ii}\xi_i^2}{\lambda_1}\geq \left(1+ \frac{1}{4nk}\right)\frac{\sigma_k^{11}\xi_1^2}{\lambda_1}.
\end{align*}
\end{lemm}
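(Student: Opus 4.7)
The plan is to exploit the classical concavity of $\sigma_k^{1/k}$ on $\Gamma_k$, which after computing its Hessian in $\lambda$ gives the Garding inequality
\begin{equation*}
\sum_{i,j}\sigma_k^{ii,jj}\xi_i\xi_j \;\leq\; \frac{k-1}{k\sigma_k}\Bigl(\sum_i\sigma_k^{ii}\xi_i\Bigr)^2.
\end{equation*}
Substituting this into the left-hand side of the desired inequality and noting $\sigma_k^{ii,ii}=0$ yields the baseline
\begin{equation*}
\mathrm{LHS}\;\geq\;\frac{k+1}{k\sigma_k}\Bigl(\sum_i\sigma_k^{ii}\xi_i\Bigr)^2 \;+\; \sum_{i\neq 1}\frac{\sigma_k^{ii}\xi_i^2}{\lambda_1}.
\end{equation*}
The crucial observation is that $\frac{k+1}{k}=1+\frac{1}{k}$ strictly exceeds the target $1+\frac{1}{4nk}$, and the gap $\frac{4n-1}{4nk}$ is the slack that will absorb lower-order errors when $\lambda_1$ is large.

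Next I would split $\xi=(\xi_1,\xi')$ with $\xi'=(\xi_2,\ldots,\xi_n)$ and set $A=\sigma_k^{11}\xi_1$, $B=\sum_{i>1}\sigma_k^{ii}\xi_i$, $T=\sum_{i>1}\sigma_k^{ii}\xi_i^2$. Using the identity $\sigma_k=\lambda_1\sigma_{k-1}(|1)+\sigma_k(|1)$, one has $\sigma_k^{11}/\sigma_k=1/\lambda_1-\sigma_k(|1)/(\lambda_1\sigma_k)$, so for $\lambda_1$ larger than a constant $C(n,k,\sigma_k)$ the main term $\frac{k+1}{k\sigma_k}A^2$ exceeds $\frac{k+1}{k}(1-o(1))\sigma_k^{11}\xi_1^2/\lambda_1$. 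The argument then splits on the ratio $|B|/|A|$: when $|B|\leq \theta|A|$ for a threshold $\theta=\theta(n,k)$, expanding $(A+B)^2\geq(1-\theta)^2A^2$ and using the slack of $1/k$ yields the desired lower bound directly; when $|B|>\theta|A|$, the auxiliary term $T/\lambda_1$ has to carry the argument.

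The hard part is this second regime, because the naive Cauchy--Schwarz bound $B^2\leq(\sum_{i>1}\sigma_k^{ii})T$ is too lossy: each $\sigma_k^{ii}=\sigma_{k-1}(|i)$ for $i>1$ contains $\lambda_1$, so $\sum_{i>1}\sigma_k^{ii}$ scales linearly in $\lambda_1$ and the resulting bound on $B^2/\sigma_k$ cannot be absorbed into $T/\lambda_1$. I would circumvent this by a weighted Cauchy--Schwarz chosen through the Newton identity $\sum_i\lambda_i\sigma_k^{ii}=k\sigma_k$: splitting $\sigma_k^{ii}\xi_i=\sqrt{\lambda_i\sigma_k^{ii}/\lambda_1}\cdot\sqrt{\lambda_1\sigma_k^{ii}/\lambda_i}\,\xi_i$ replaces the offending factor $\sum_{i>1}\sigma_k^{ii}$ by the bounded combination $\sum_{i>1}\lambda_i\sigma_k^{ii}/\lambda_1\leq k\sigma_k/\lambda_1$, at the price of replacing $T$ by $\sum_{i>1}(\lambda_1/\lambda_i)\sigma_k^{ii}\xi_i^2$; the latter is then controlled by the original $T/\lambda_1$ up to a constant depending on $n,k$. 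Tracking the constants through both sub-cases, matching them at the optimal threshold $\theta=\theta(n,k)$, and taking $\lambda_1>C(n,k,\sigma_k)$ large enough to absorb the error $O(\sigma_k(|1)/(\lambda_1\sigma_k))$ in the expansion of $\sigma_k^{11}/\sigma_k$ delivers the precise coefficient $1+\frac{1}{4nk}$.
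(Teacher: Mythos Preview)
Your opening move --- applying the concavity of $\sigma_k^{1/k}$ to the \emph{full} vector $\xi$ --- already loses the inequality. After that step your baseline is
\[
\mathrm{LHS}\;\geq\;\frac{k+1}{k\sigma_k}(A+B)^2+\frac{T}{\lambda_1},
\]
and this lower bound is strictly too weak. Take any $j>1$, set $\xi_i=0$ for $i\notin\{1,j\}$, and choose $\xi_j=-\sigma_k^{11}\xi_1/\sigma_k^{jj}$ so that $B=-A$. Then the baseline reduces to
\[
\frac{T}{\lambda_1}=\frac{\sigma_k^{jj}\xi_j^2}{\lambda_1}=\frac{(\sigma_k^{11})^2\xi_1^2}{\sigma_k^{jj}\,\lambda_1}
=\frac{\sigma_k^{11}}{\sigma_k^{jj}}\cdot\frac{\sigma_k^{11}\xi_1^2}{\lambda_1},
\]
and since $\sigma_k^{jj}=\sigma_{k-1}(|j)\geq\sigma_{k-1}(|1)=\sigma_k^{11}$ (with strict inequality once $\lambda_j<\lambda_1$), this is at most $\frac{\sigma_k^{11}\xi_1^2}{\lambda_1}$ and in fact tends to $0$ as $\lambda_1\to\infty$ with $\sigma_k$ fixed. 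So no choice of threshold $\theta$ or weighted Cauchy--Schwarz can rescue the second case: your weighted inequality only yields a lower bound on $\sum_{i>1}\sigma_k^{ii}\xi_i^2/\lambda_i$, which is \emph{larger} than $T/\lambda_1$ (the inequality goes the wrong way), and the claim that $\sum_{i>1}(\lambda_1/\lambda_i)\sigma_k^{ii}\xi_i^2$ is controlled by $T/\lambda_1$ up to $C(n,k)$ is simply false, since $\lambda_1/\lambda_i$ is unbounded.

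The missing idea is that the cross terms $-2\sigma_k^{11,ii}\xi_1\xi_i$ must be kept explicit rather than absorbed into the Garding inequality. The paper applies concavity of $\sigma_k^{1/k}$ only to $(0,\xi_2,\dots,\xi_n)$, which produces $\frac{A^2}{\sigma_k}+\frac{B^2}{k\sigma_k}$ (after one copy of $(A+B)^2/\sigma_k$) together with the residual cross terms; combining with the second copy of $(A+B)^2/\sigma_k$ then gives the clean main term $(1+\tfrac{1}{2k})A^2/\sigma_k$. The point is the identity
\[
\sigma_k^{11}\sigma_k^{ii}-\sigma_k\,\sigma_k^{11,ii}=\sigma_{k-1}^2(|1i)-\sigma_k(|1i)\sigma_{k-2}(|1i),
\]
whose right-hand side involves only $\lambda_2,\dots,\lambda_n$; hence the cross terms carry no factor of $\lambda_1$ and are genuinely lower order, absorbable by a small piece of $A^2/\sigma_k$ together with $\sigma_k^{ii}\xi_i^2/\lambda_1$ via elementary AM--GM (done separately for $i\leq k$ and $i>k$). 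Your route discards exactly this structure at the first step.
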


\begin{proof}
Without loss of generality, we may assume $\lambda_k\leq 1$. In fact if it is not true, then we have $\sigma_k\geq \lambda_1\cdots\lambda_k\geq
\lambda_1$. We may choose $C$ such that $\lambda_1> C>\sigma_k$ to obtain a contradiction.

Note that 
\begin{align*}
\sigma_k(|1)\leq c(n,k)\lambda_2\cdots\lambda_{k+1}\leq c(n,k)\sigma_k^{11}\lambda_{k+1}\leq c(n,k)\sigma_k^{11},
\end{align*}
we have used the fact that $\lambda_{k+1}\leq \lambda_k\leq 1$ in the last inequality. Here $c(n,k)$ denotes a positive constant depending only on $n$ and $k$, it may change from line to line.

Together with Lemma \ref{Sigma_k-Lemma-0}, we have 
\begin{align*}
\sigma_k=\lambda_1\sigma_k^{11}+\sigma_k(|1)\leq \left( 1+\frac{c(n,k)}{\lambda_1}\right) \lambda_1\sigma_k^{11}.
\end{align*}

Thus 
\begin{align}  \label{con-0}
\frac{\sigma_k^{11}}{\sigma_k}\geq \frac{1}{1+\frac{c(n,k)}{\lambda_1}}\cdot  \frac{1}{\lambda_1}.
\end{align}

We now start to prove the lemma. Recall that $\sigma_k^{\frac{1}{k}}$ is concave, and thus 
\begin{align*}
\frac{1}{k}\sigma_k^{\frac{1}{k}-1}\sigma_k^{ii,jj}+\frac{1}{k}\left(\frac{1}{k}-1\right)\sigma_k^{\frac{1}{k}-2}\sigma_k^{ii}\sigma_k^{jj}\leq 0,
\end{align*}
in the sense of comparison of matrices.

Contracting with $(0,\xi_2,\cdots,\xi_n)$, we have 
\begin{align*}
-\sum_{i\neq j:i,j\neq 1}\sigma_k^{ii,jj}\xi_i\xi_j\geq \left(\frac{1}{k}-1\right)\frac{1}{\sigma_k}\left(\sum_{i\neq 1}\sigma_k^{ii}\xi_i\right)^2.
\end{align*}

It follows that 
\begin{align}  \label{con-1}
&\ -\sum_{i\neq j}\sigma_k^{ii,jj}\xi_i\xi_j+\frac{1}{\sigma_k}\left(\sum_i\sigma_k^{ii}\xi_i\right)^2 \\
\geq&\ -2\sum_{i\neq 1} \sigma_k^{11,ii}\xi_1\xi_i+\frac{2}{\sigma_k}\sum_{i\neq 1}\sigma_k^{11}\sigma_k^{ii}\xi_1\xi_i+\frac{\left( \sigma_k^{11}\xi_1\right)^2}{\sigma_k}+\frac{1}{k\sigma_k}\left(\sum_{i\neq 1}\sigma_k^{ii}\xi_i\right)^2  \notag \\
=&\ \frac{2}{\sigma_k}\sum_{i\neq 1} \bigg( \sigma^2_{k-1}(|1i)-\sigma_{k}(|1i)\sigma_{k-2}(|1i)\bigg)\xi_1\xi_i+\frac{\left( \sigma_k^{11}\xi_1\right)^2}{\sigma_k}+\frac{1}{k\sigma_k}\left(\sum_{i\neq 1}\sigma_k^{ii}\xi_i\right)^2,  \notag
\end{align}
we have used the fact $\sigma_k^{11}\sigma_k^{ii}-\sigma_k\sigma_k^{11,ii}=\sigma^2_{k-1}(|1i)-\sigma_{k}(|1i)\sigma_{k-2}(|1i)$ in the last line, see (4.22) in \cite{GRW}.

Note that 
\begin{align*}
&\ \frac{\left( \sigma_k^{11}\xi_1\right)^2}{\sigma_k}+\frac{1}{k\sigma_k}\left(\sum_{i\neq 1}\sigma_k^{ii}\xi_i\right)^2+\frac{1}{\sigma_k}\left(\sum_i \sigma_k^{ii}\xi_i\right)^2 \\
=&\ \frac{2\left( \sigma_k^{11}\xi_1\right)^2}{\sigma_k}+ \frac{2\sigma_k^{11}\xi_1}{\sigma_k}\left(\sum_{i\neq 1}\sigma_k^{ii}\xi_i\right)+\frac{k+1}{k\sigma_k}\left(\sum_{i\neq 1}\sigma_k^{ii}\xi_i\right)^2  \notag \\
\geq &\ \left(2-\frac{k}{k+1}\right)\frac{\left( \sigma_k^{11}\xi_1\right)^2}{\sigma_k}\geq \left(1+\frac{1}{2k}\right)\frac{\left( \sigma_k^{11}\xi_1\right)^2}{\sigma_k}. 
\notag
\end{align*}

Plugging into (\ref{con-1}), we have 
\begin{align}  \label{con-3}
&\ -\sum_{i\neq j}\sigma_k^{ii,jj}\xi_i\xi_j+\frac{2}{\sigma_k}\left(\sum_i\sigma_k^{ii}\xi_i\right)^2 \\
\geq &\ \frac{2}{\sigma_k}\sum_{i\neq 1} \bigg( \sigma^2_{k-1}(|1i)-\sigma_{k}(|1i)\sigma_{k-2}(|1i)\bigg)\xi_1\xi_i+\left(1+\frac{1}{2k}\right)\frac{\left( \sigma_k^{11}\xi_1\right)^2}{\sigma_k}.  \notag
\end{align}

Recall that 
\begin{align}  \label{con-4}
\left( \sigma_k^{11}\right) ^2\geq (\lambda_2\cdots\lambda_k)^2.
\end{align}

By Newton-Maclaurin inequality, we have 
\begin{align*}
0\leq \sigma^2_{k-1}(|1i)-\sigma_{k}(|1i)\sigma_{k-2}(|1i)\leq \sigma^2_{k-1}(|1i).
\end{align*}

Thus for each $1<i\leq k$, we have 
\begin{align}  \label{con-4-1}
\left|\sigma^2_{k-1}(|1i)-\sigma_{k}(|1i)\sigma_{k-2}(|1i)\right|\leq &\ c(n,k) \left(\frac{\lambda_2\cdots\lambda_{k+1}}{\lambda_i}\right)^2, \\
\frac{\sigma_k\sigma_k^{ii}}{\lambda_1}\geq \frac{\lambda_1\cdots\lambda_k}{\lambda_1}\cdot \frac{\lambda_1\cdots\lambda_k}{\lambda_i}=&\ (\lambda_2\cdots\lambda_k)^2\frac{\lambda_1}{\lambda_i}.  \notag
\end{align}

Combining (\ref{con-4}) and (\ref{con-4-1}), we have 
\begin{align}  \label{con-5}
&\ \frac{2}{\sigma_k}\bigg( \sigma^2_{k-1}(|1i)-\sigma_{k}(|1i)\sigma_{k-2}(|1i)\bigg)\xi_1\xi_i+\frac{\left( \sigma_k^{11}\xi_1\right)^2}{2nk\sigma_k}+\frac{\sigma_k^{ii}\xi_i^2}{\lambda_1} \\
\geq&\ \frac{(\lambda_2\cdots\lambda_k)^2}{\sigma_k}\left(-c(n,k)\frac{\lambda_{k+1}^2}{\lambda_i^2} |\xi_1\xi_i|+\frac{\xi_1^2}{2nk}+\frac{\lambda_1}{\lambda_i}\xi_i^2\right)  \notag \\
\geq &\ \frac{(\lambda_2\cdots\lambda_k)^2}{\sigma_k}\left(\frac{\lambda_1}{\lambda_i}-c(n,k)\frac{\lambda_{k+1}^4}{\lambda_i^4}\right)\xi_i^2  \notag \\
\geq &\ \frac{(\lambda_2\cdots\lambda_k)^2}{\sigma_k}\left(\frac{\lambda_1}{\lambda_i}-c(n,k)\frac{\lambda_{k+1}}{\lambda_i}\right)\xi_i^2 \geq 0, 
\notag
\end{align}
as $\lambda_1$ is sufficiently large, $\lambda_i\geq \lambda_{k+1}$ and $\lambda_{k+1}\leq \lambda_k\leq 1$.

Similarly, for each $k+1\leq i\leq n$, we have 
\begin{align}  \label{con-4-2}
\left|\sigma^2_{k-1}(|1i)-\sigma_{k}(|1i)\sigma_{k-2}(|1i)\right|\leq&\ c(n,k) \left(\lambda_2\cdots\lambda_k\right)^2, \\
\frac{\sigma_k\sigma_k^{ii}}{\lambda_1}\geq \frac{\lambda_1\cdots\lambda_k}{\lambda_1}\cdot \lambda_1\cdots\lambda_{k-1}=&\ (\lambda_2\cdots\lambda_k)^2\frac{\lambda_1}{\lambda_k}.  \notag
\end{align}

Combining (\ref{con-4}) and (\ref{con-4-2}), we have 
\begin{align}  \label{con-6}
&\ \frac{2}{\sigma_k}\bigg( \sigma^2_{k-1}(|1i)-\sigma_{k}(|1i)\sigma_{k-2}(|1i)\bigg)\xi_1\xi_i+\frac{\left( \sigma_k^{11}\xi_1\right)^2}{2nk\sigma_k}+\frac{\sigma_k^{ii}\xi_i^2}{\lambda_1} \\
\geq &\ \frac{(\lambda_2\cdots\lambda_k)^2}{\sigma_k}\left(-c(n,k)|\xi_1\xi_i|+\frac{\xi_1^2}{2nk}+\frac{\lambda_1}{\lambda_k}\xi_i^2\right)  \notag
\\
\geq &\ \frac{(\lambda_2\cdots\lambda_k)^2}{\sigma_k}\left(\frac{\lambda_1}{\lambda_k}-c(n,k)\right)\xi_i^2\geq 0,  \notag
\end{align}
as $\lambda_1$ is sufficiently large and $\lambda_k\leq 1$.

Combining (\ref{con-3}), (\ref{con-5}) and (\ref{con-6}), together with (\ref{con-0}), we have 
\begin{align*}
&\ -\sum_{i\neq j}\sigma_k^{ii,jj}\xi_i\xi_j+\frac{2}{\sigma_k}\left(\sum_i\sigma_k^{ii}\xi_i\right)^2+\sum_{i\neq 1}\frac{\sigma_k^{ii}\xi_i^2}{\lambda_1} \\
\geq &\ \left(1+\frac{1}{2nk}\right)\frac{\left( \sigma_k^{11}\xi_1\right)^2}{\sigma_k}\geq \left(1+\frac{1}{2nk}\right) \frac{1}{1+\frac{c(n,k)}{\lambda_1}}\frac{\sigma_k^{11}\xi_1^2}{\lambda_1} \\
\geq&\ \left(1+\frac{1}{4nk}\right)\frac{\sigma_k^{11}\xi_1^2}{\lambda_1},
\end{align*}
as $\lambda_1$ is sufficiently large.

The lemma is now proved.
\end{proof}

\subsection{Concavity inequality for Hessian quotient operator}

In this subsection, we will state a concavity inequality for Hessian quotient operator. The following lemma was proved by Guan and Sroka \cite{GS}, see also Proposition 2.5 in \cite{Sro}.
\begin{lemm}[Guan and Sroka \cite{GS}]
\label{Concavity lemma} Let $n\geq 2$, $1\leq k<n$ and $F=\frac{\sigma_n}{\sigma_k}$ with $\lambda_1\geq \cdots\geq \lambda_n>0$. Then for any $\xi\in 
\mathbb{R}^n$, we have 
\begin{align*}
-\sum_{i,j}F^{ii,jj}\xi_i\xi_j +\frac{1}{F}\left(\sum_i
F^{ii}\xi_i\right)^2\geq \left( 1+c(n,k)\right)\frac{F^{11}\xi_1^2}{\lambda_1},
\end{align*}
where $c(n,k)$ is a positive constant depending only on $n$ and $k$.
\end{lemm}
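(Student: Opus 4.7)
The plan is to rewrite the left hand side as a weighted second derivative of $\log F$ and exploit the structure $\log F = \log\sigma_n - \log\sigma_k$. From the chain rule identity $(\log F)^{ii,jj} = F^{ii,jj}/F - F^{ii}F^{jj}/F^2$ one gets the algebraic reformulation
\[
-\sum_{i,j} F^{ii,jj}\xi_i\xi_j + \frac{1}{F}\Bigl(\sum_i F^{ii}\xi_i\Bigr)^2 = -F\sum_{i,j}(\log F)^{ii,jj}\xi_i\xi_j,
\]
so the claim becomes a quantitative concavity statement for $\log F$. Since $F$ is a symmetric spectral function, at a diagonal point the matrix second derivatives $(\log F)^{ii,jj}$ agree with the eigenvalue partials $\partial_i\partial_j(\log F)$. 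Using $\log F = \sum_i \log \lambda_i - \log \sigma_k$ together with $\sigma_k^{ii,ii}=0$, a direct expansion yields
\[
-\sum_{i,j}\partial_i\partial_j(\log F)\,\xi_i\xi_j = \sum_i\frac{\xi_i^2}{\lambda_i^2} + \frac{\sum_{i,j}\sigma_k^{ii,jj}\xi_i\xi_j}{\sigma_k} - \frac{\bigl(\sum_i\sigma_k^{ii}\xi_i\bigr)^2}{\sigma_k^2}.
\]
The last two terms together form $D^2(\log\sigma_k)(\xi,\xi)\leq 0$, so nonnegativity of the whole expression (the baseline concavity of $\log F$) already holds; the task is to refine it to a lower bound of $(1+c(n,k))F^{11}\xi_1^2/(F\lambda_1)$.

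For the distinguished direction $\xi = e_1$ everything collapses cleanly. Using $\sigma_k^{11,11}=0$ and factoring, one gets
\[
-\partial_1^2\log F = \frac{1}{\lambda_1^2} - \Bigl(\frac{\sigma_k^{11}}{\sigma_k}\Bigr)^2 = \frac{F^{11}}{F}\Bigl(\frac{1}{\lambda_1} + \frac{\sigma_k^{11}}{\sigma_k}\Bigr),
\]
after using $F^{11}/F = 1/\lambda_1 - \sigma_k^{11}/\sigma_k$ (which is also positive on the positive cone). The lemma for $\xi = e_1$ thus reduces to $\lambda_1\sigma_k^{11}/\sigma_k \geq c(n,k)>0$, and writing $\sigma_k = \lambda_1\sigma_k^{11} + \sigma_k(\lambda|1)$ together with the Newton--Maclaurin estimate $\sigma_k(\lambda|1)/\sigma_{k-1}(\lambda|1) \leq (n-k)\lambda_2/k \leq (n-k)\lambda_1/k$ yields $\lambda_1\sigma_k^{11}/\sigma_k \geq k/n$, so $c = k/n$ works in this direction.

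For a general $\xi$, the main obstacle is controlling the off-diagonal cross terms $\sum_{i\neq j}\sigma_k^{ii,jj}\xi_i\xi_j/\sigma_k$ together with the $(\sum\sigma_k^{ii}\xi_i)^2/\sigma_k^2$ contribution, which can partially cancel each other in a sign-sensitive way. My plan is to combine the diagonal identity $\sum_i\xi_i^2/\lambda_i^2 = \xi_1^2/\lambda_1^2 + \sum_{i\neq 1}\xi_i^2/\lambda_i^2$ with the sharper concavity inequality
\[
\sum_{i,j}\sigma_k^{ii,jj}\xi_i\xi_j \leq \Bigl(1-\tfrac{1}{k}\Bigr)\frac{(\sum_i\sigma_k^{ii}\xi_i)^2}{\sigma_k}
\]
coming from the concavity of $\sigma_k^{1/k}$, and then apply a Cauchy--Schwarz splitting of $\sum_i\sigma_k^{ii}\xi_i = \sigma_k^{11}\xi_1 + \sum_{i\neq 1}\sigma_k^{ii}\xi_i$ to isolate the $\xi_1$ contribution, absorbing the mixed products $\xi_1\xi_i$ ($i>1$) into the diagonal terms via Newton--Maclaurin type bounds on the individual $\sigma_k^{ii}$. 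The hardest step I anticipate is the uniform absorption of these cross products when $\lambda_1$ is much larger than the other eigenvalues: there the $\sigma_k$ second-order quantities become strongly direction dependent, and the combinatorial identities on elementary symmetric polynomials used in the proof of Lemma~\ref{Concavity lemma-hessian} should be indispensable in pinning down the constant $c(n,k)$.
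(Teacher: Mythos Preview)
The paper does not prove this lemma; it is quoted from Guan and Sroka \cite{GS} (see also \cite{Sro}), so there is no in-paper argument to compare against. What follows is an assessment of your attempt on its own.

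Your reformulation via $\log F$ is correct: the identity
\[
-\sum_{i,j}F^{ii,jj}\xi_i\xi_j+\frac{1}{F}\Bigl(\sum_iF^{ii}\xi_i\Bigr)^2=-F\sum_{i,j}\partial_i\partial_j(\log F)\,\xi_i\xi_j
=F\Bigl[\sum_i\frac{\xi_i^2}{\lambda_i^2}+D^2(\log\sigma_k)(\xi,\xi)\Bigr]
\]
is valid, and your treatment of the case $\xi=e_1$ is complete, including the bound $\lambda_1\sigma_k^{11}/\sigma_k\ge k/n$.

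For general $\xi$, however, there is a genuine gap. The inequality you invoke from the concavity of $\sigma_k^{1/k}$, namely $\sum_{i,j}\sigma_k^{ii,jj}\xi_i\xi_j\le(1-\tfrac1k)(\sum_i\sigma_k^{ii}\xi_i)^2/\sigma_k$, is equivalent to $D^2(\log\sigma_k)(\xi,\xi)\le -\tfrac{1}{k}(\sum_i\sigma_k^{ii}\xi_i)^2/\sigma_k^2$. This bounds the negative term $D^2(\log\sigma_k)(\xi,\xi)$ from \emph{above}, whereas you need a lower bound on it to control the left-hand side from below; as stated it goes the wrong way. Your fallback plan---a Cauchy--Schwarz split of $\sum_i\sigma_k^{ii}\xi_i$ together with absorption of the cross terms $\xi_1\xi_i$ into $\sum_{i\ne1}\xi_i^2/\lambda_i^2$---is only a sketch, and you yourself flag the absorption as the hardest unfinished step. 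Note also that the method of Lemma~\ref{Concavity lemma-hessian}, to which you appeal, uses the standing hypothesis $\lambda_1>C$ in an essential way (several steps there require $\lambda_1$ large to make the absorption succeed); Lemma~\ref{Concavity lemma} carries no such hypothesis, so that argument cannot be transplanted without a new idea to handle the regime where $\lambda_1$ is comparable to the other eigenvalues.
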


\subsection{Jabobi inequality}

In this subsection, we will prove a Jacobi inequality for the Hessian quotient equation.

\begin{lemm}
\label{Jacobi} Let $n\geq 2$, $1\leq k<n$ and let $\Omega$ be a bounded domain. Let $f\in C^{1,1}\left(\overline{\Omega}\times \mathbb{R} \right)$ be a positive function and let $u\in C^4\left(\overline{\Omega}\right) $ be a convex solution of (\ref{main-eqn}). For any $x_0\in \Omega$, suppose that $D^2u$ is diagonalized at $x_0$ such that $\lambda_1\geq \cdots\geq \lambda_n$. Let $b=\ln\lambda_1$, then at $x_0$, we have 
\begin{align*}
\sum_i F^{ii}b_{ii}\geq c(n,k)\sum_i F^{ii}b_i^2-C,
\end{align*}
in the viscosity sense, where $c(n,k)$ is a positive constant depending only on $n$ and $k$, and $C$ is a positive constant depending only on $n$, $\|u\|_{C^{0,1 }\left(\overline{\Omega}\right) }$, $\min_{\overline{\Omega}\times [-M,M]}f $ and $\|f\|_{C^{1,1}\left( \overline{\Omega}\times [-M,M]\right)}$. Here $M$ is a large constant satisfying $\|u\|_{L^\infty\left( \overline{\Omega}\right) }\leq M$.
\end{lemm}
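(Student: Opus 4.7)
The plan is to compute the first and second derivatives of $b=\ln\lambda_1$ at $x_0$, invoke the twice-differentiated equation, apply the Guan--Sroka concavity inequality (Lemma \ref{Concavity lemma}), and then observe a clean algebraic cancellation between the off-diagonal second-order terms and the third-derivative correction in the formula for $b_{ii}$. When $\lambda_1$ is degenerate at $x_0$, $b$ is only Lipschitz, so the inequality is interpreted in the viscosity sense; the standard eigenvalue-perturbation trick (e.g., replacing $b$ by $\tilde b(x)=\ln(\xi^{\top}D^2u(x)\xi)$ with $\xi$ a fixed unit vector equal to $e_1$ at $x_0$, or perturbing $D^2u$ slightly to separate eigenvalues) reduces matters to the pointwise inequality at $x_0$ assuming $\lambda_1$ is simple, which we now sketch.

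At $x_0$, with $D^2u$ diagonal and $\lambda_1\geq\cdots\geq\lambda_n$, the perturbation formulas for the top eigenvalue of a symmetric matrix give
\begin{align*}
b_i=\frac{u_{11i}}{\lambda_1},\qquad b_{ii}=\frac{u_{11ii}}{\lambda_1}-\frac{u_{11i}^2}{\lambda_1^2}+\frac{2}{\lambda_1}\sum_{j>1}\frac{u_{1ji}^2}{\lambda_1-\lambda_j}.
\end{align*}
Differentiating $F(D^2u)=f(x,u)$ once and twice in $x_1$ and using Lemma \ref{F^{ijkl}} yields
\begin{align*}
\sum_p F^{pp}u_{pp1}=f_1+f_uu_1,\qquad \sum_p F^{pp}u_{pp11}=f_u\lambda_1+O(1)-\sum_{p,r}F^{pp,rr}u_{pp1}u_{rr1}-\sum_{p\neq q}F^{pq,qp}u_{pq1}^2.
\end{align*}
Since $F=f\geq\min f>0$ and $|\sum_pF^{pp}u_{pp1}|\leq C$, Lemma \ref{Concavity lemma} applied with $\xi_i=u_{ii1}$ furnishes
\begin{align*}
-\sum_{p,r}F^{pp,rr}u_{pp1}u_{rr1}\geq (1+c_0)\frac{F^{11}u_{111}^2}{\lambda_1}-C,\qquad c_0=c(n,k)>0.
\end{align*}

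Substituting into $\sum_iF^{ii}b_{ii}$, multiplying by $\lambda_1$, and absorbing bounded quantities into $-C\lambda_1$, the Jacobi inequality reduces to showing
\begin{align*}
(1+c_0)\frac{F^{11}u_{111}^2}{\lambda_1}+\sum_{p\neq q}(-F^{pq,qp})u_{pq1}^2+2\sum_{j>1,\,i}\frac{F^{ii}u_{1ji}^2}{\lambda_1-\lambda_j}\geq \frac{1+c}{\lambda_1}\sum_i F^{ii}u_{11i}^2-C\lambda_1
\end{align*}
for a choice $c=c(n,k)\in(0,\min(c_0,1))$. The $i=1$ piece of the right-hand side is dominated by the first term on the left. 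For each $q>1$, the only contributions to the coefficient of $u_{11q}^2$ on the left come from the index choice $j=q,\,i=1$ in the third-order sum and from the ordered pairs $(p,q')=(1,q),(q,1)$ in the off-diagonal sum; using $-F^{1q,q1}=(F^{qq}-F^{11})/(\lambda_1-\lambda_q)$ from Lemma \ref{F^{ijkl}}, these combine into
\begin{align*}
\frac{2F^{11}}{\lambda_1-\lambda_q}+\frac{2(F^{qq}-F^{11})}{\lambda_1-\lambda_q}=\frac{2F^{qq}}{\lambda_1-\lambda_q}\geq\frac{2F^{qq}}{\lambda_1}>\frac{(1+c)F^{qq}}{\lambda_1},
\end{align*}
which dominates the $i=q$ piece of the right-hand side. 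The remaining terms on the left ($u_{1ij}^2$ with $i,j>1,\,i\neq j$, and $(-F^{pq,qp})u_{pq1}^2$ with $p,q>1,\,p\neq q$) are non-negative and may be discarded, completing the estimate.

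The main obstacle is spotting the precise cancellation of $F^{11}$ in the coefficient of $u_{11q}^2$: without it, the coefficient would be governed by $F^{11}$, which for Hessian quotient operators can be much smaller than $F^{qq}$ when $\lambda_1\gg\lambda_q$, and the inequality would fail. Once this cancellation is identified, the rest is an elementary consequence of $\lambda_1-\lambda_q\leq\lambda_1$ and the positivity of $-F^{pq,qp}$ from Lemma \ref{F^{ijkl}}. The viscosity-sense reduction handling degenerate eigenvalues in the opening step is standard and causes no additional difficulty.
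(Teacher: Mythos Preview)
Your argument is correct and is essentially the same as the paper's: the paper defers to Lemma~4.1 of \cite{Lu24} with the single modification of replacing the concavity input there by the Guan--Sroka inequality (Lemma~\ref{Concavity lemma}), which is exactly the computation you carry out. The eigenvalue-perturbation formulas, the twice-differentiated equation, the application of Lemma~\ref{Concavity lemma} with $\xi_i=u_{ii1}$, and the cancellation $2(-F^{1q,q1})+2F^{11}/(\lambda_1-\lambda_q)=2F^{qq}/(\lambda_1-\lambda_q)$ for the coefficient of $u_{11q}^2$ are precisely the ingredients of that standard Jacobi computation, and your treatment of the degenerate-eigenvalue case via a smooth support function is equivalent to invoking the inequality of \cite{BCD} used elsewhere in the paper.
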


\begin{proof}
The proof is almost the same as the proof of Lemma 4.1 in \cite{Lu24}. The only difference is that instead of using Lemma 3.1 and Lemma 3.2 in \cite{Lu24}, we use Lemma \ref{Concavity lemma} here. The rest are exactly the same, we omit the detail here.
\end{proof}

\subsection{Comparison principle}

In this subsection, we will state a classic comparison principle for viscosity solution, see for instance in \cite{IL}. Since our equation is quite simple, we will give a straightforward proof here.
\begin{lemm}\label{compare 1} 
Let $n\geq 2$, $1\leq k<n$ and let $\Omega$ be a bounded domain. Let $f\in C^0\left(\Omega\right) $ be a positive function.

$(1)$ Let $u\in C^0\left( \overline{\Omega}\right) $ be a viscosity subsolution of (\ref{viscosity}), and let $v\in C^0\left( \overline{\Omega}\right) \cap C^2\left( \Omega\right)$ be a convex supersolution of (\ref{viscosity}). If $u\leq v$ on $\partial \Omega$, then $u\leq v$ in $\Omega$.

$(2)$ Let $u\in C^0\left( \overline{\Omega}\right) $ be a viscosity supersolution of (\ref{viscosity}), and let $v\in C^0\left( \overline{\Omega}\right) \cap C^2\left( \Omega\right)$ be a convex subsolution of (\ref{viscosity}). If $u\geq v$ on $\partial \Omega$, then $u\geq v$ in $\Omega$.

\end{lemm}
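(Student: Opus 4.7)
Our plan is to prove both parts by contradiction after a small quadratic perturbation of $v$. The mechanism is the strict monotonicity of $F=\sigma_n/\sigma_k$ on $\Gamma_n$: combining the formula in Lemma~\ref{F^{ijkl}} with the identity $\sigma_k=\lambda_p\sigma_{k-1}(|p)+\sigma_k(|p)$ from Lemma~\ref{Sigma_k-Lemma-0} gives
\begin{equation*}
F^{pp}=\frac{\sigma_n\,\sigma_k(|p)}{\lambda_p\,\sigma_k^2},
\end{equation*}
which is strictly positive on $\Gamma_n$ since $1\leq k<n$, so $F(A)<F(B)$ whenever $A,B\in\Gamma_n$ with $A<B$. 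The main hurdle is that the viscosity definition and the classical supersolution/subsolution inequalities are both non-strict at the touching point, so no contradiction can be extracted directly; the role of the perturbation is precisely to manufacture the strict inequality.

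For part (1), fix $R>0$ with $\overline\Omega\subset\overline{B_R(0)}$ and for $\delta>0$ set
\begin{equation*}
v_\delta(x):=v(x)+\delta(R^2-|x|^2),
\end{equation*}
so that $v_\delta\geq v\geq u$ on $\partial\Omega$ and $D^2v_\delta=D^2v-2\delta I$. We show $u\leq v_\delta$ on $\overline\Omega$ for every $\delta>0$ and then let $\delta\to 0$. Suppose instead that $\max_{\overline\Omega}(u-v_\delta)>0$; since the boundary value is $\leq 0$, this maximum is attained at an interior point $x_0\in\Omega$, and $\varphi(x):=v_\delta(x)+(u(x_0)-v_\delta(x_0))\in C^2(\Omega)$ touches $u$ from above at $x_0$. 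The viscosity subsolution condition gives $F(D^2v(x_0)-2\delta I)\geq f(x_0)>0$, which forces $D^2v(x_0)-2\delta I\in\Gamma_n$ and hence $D^2v(x_0)\in\Gamma_n$ with $D^2v(x_0)>2\delta I$. Strict monotonicity then yields
\begin{equation*}
F(D^2v(x_0))>F(D^2v(x_0)-2\delta I)\geq f(x_0),
\end{equation*}
contradicting the classical supersolution inequality $F(D^2v(x_0))\leq f(x_0)$.

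Part (2) follows from the mirror perturbation $\tilde v_\delta(x):=v(x)-\delta(R^2-|x|^2)$, which satisfies $\tilde v_\delta\leq v\leq u$ on $\partial\Omega$ and $D^2\tilde v_\delta=D^2v+2\delta I$. Convexity of $v$ together with $F(D^2v)\geq f>0$ forces $D^2v\in\Gamma_n$, and strict monotonicity upgrades $F(D^2v)\geq f$ to $F(D^2\tilde v_\delta)>F(D^2v)\geq f$, so $\tilde v_\delta$ is a strict classical subsolution. If $\max_{\overline\Omega}(\tilde v_\delta-u)>0$ at an interior $x_0$, then $\tilde v_\delta-(\tilde v_\delta(x_0)-u(x_0))\in C^2(\Omega)$ touches $u$ from below at $x_0$, and the viscosity supersolution condition gives $F(D^2\tilde v_\delta(x_0))\leq f(x_0)<F(D^2\tilde v_\delta(x_0))$, a contradiction. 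Sending $\delta\to 0$ concludes the proof. The one technical step that warrants care in both parts is verifying $\Gamma_n$-admissibility of the perturbed test Hessian at the interior extremum; here it is secured by combining the strict positivity of $f$ with the convexity of $v$, so that the viscosity inequality $F(\cdot)\geq f>0$ cannot hold outside $\Gamma_n$.
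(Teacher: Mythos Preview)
Your proof is correct and follows a genuinely different route from the paper's. Both arguments rely on a quadratic perturbation to manufacture strictness and then reach a contradiction at an interior extremum, but the roles are reversed: the paper perturbs the \emph{viscosity} function, setting $u_\epsilon=u+\epsilon|x|^2$, and then invokes the concavity and degree-one homogeneity of $\tilde F=(\sigma_n/\sigma_k)^{1/(n-k)}$ to show that $u_\epsilon$ is a viscosity subsolution with right-hand side $\tilde f+c(n,k)\epsilon$; you instead perturb the \emph{classical} function $v$ and use only strict monotonicity of $F$ on $\Gamma_n$, which you derive cleanly from Lemmas~\ref{F^{ijkl}} and~\ref{Sigma_k-Lemma-0}. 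Your approach is arguably more elementary, since it avoids the separate ``Claim'' step about viscosity subsolutions and does not need concavity at all; the paper's route, on the other hand, generalizes more readily to operators that are concave but not obviously strictly monotone.

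One small imprecision: in your closing remark you attribute the $\Gamma_n$-admissibility in part~(1) to the convexity of $v$, but convexity of $v$ alone does not give $D^2v(x_0)-2\delta I\geq 0$. What actually secures this is the local convexity of $u$ (built into the definition of viscosity subsolution in the paper): since $\varphi=v_\delta+\text{const}$ touches $u$ from above at $x_0$, one has $D^2\varphi(x_0)\geq 0$, and then $F(D^2\varphi(x_0))\geq f(x_0)>0$ forces $\sigma_n(D^2\varphi(x_0))>0$, hence $D^2\varphi(x_0)\in\Gamma_n$. Your attribution is correct for part~(2), where convexity of $v$ together with $F(D^2v)\geq f>0$ indeed yields $D^2v\in\Gamma_n$. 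This is only a wording issue and does not affect the validity of the argument.
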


\begin{proof}
We only need to prove $(1)$, $(2)$ can be proved in a similar manner. 

For the sake of simplicity, define
\begin{align*}
\tilde{F}=\left(\frac{\sigma_n}{\sigma_k}\right)^{\frac{1}{n-k}},\quad \tilde{f}=f^{\frac{1}{n-k}}.
\end{align*}

Then $u$ is a viscosity subsolution of $\tilde{F} \left( D^2w\right)=\tilde{f}$ and $v$ is a convex supersolution of $\tilde{F} \left( D^2w\right)=\tilde{f}$.

Let
\begin{equation*}
u_{\epsilon}=u+\epsilon |x|^{2}.
\end{equation*}

{\bf Claim}: $u_\epsilon$ is a viscosity subsolution of $\tilde{F} \left( D^2w\right)=\tilde{f}+c(n,k)\epsilon$, where $c(n,k)$ is a positive constant depending only on $n$ and $k$. 

\medskip

For any $x_0\in \Omega$, let $\varphi _{\epsilon}\in C^2(\Omega)$ be a function satisfying
\begin{align*}
\varphi_\epsilon(x_0)=u_\epsilon(x_0),\quad and\quad \varphi_\epsilon\geq u_\epsilon,\quad in\quad \Omega.
\end{align*}

Let $\varphi =\varphi _{\epsilon }-\epsilon |x|^2 $. Then we have
\begin{align*}
\varphi(x_0)=u(x_0),\quad and\quad \varphi\geq u,\quad in\quad \Omega.
\end{align*}

Since $u$ is a viscosity subsolution of $\tilde{F} \left( D^2w \right)=\tilde{f}$, we have
\begin{equation*}
\tilde{F} \left( D^2\varphi(x_0)\right) \geq \tilde{f}(x_0).
\end{equation*}

It is not hard to verify that $D^{2}\varphi (x_{0})\geq 0$ since $u$ is convex. Let $A=D^{2}\varphi (x_{0})$ and $B=2\epsilon I$. Since $\tilde{F}$ is concave on nonnegative matrices, we have $2\tilde{F}\left( \frac{A+B}{2}\right) \geq \tilde{F}( A) +\tilde{F}( B) $. Together with the fact that $\tilde{F}$ is homogeneous of degree one, we have
\begin{equation*}
\tilde{F}\left( D^2\varphi _\epsilon (x_0)\right) \geq \ \tilde{F}\left( D^2\varphi (x_0)\right) +\ \tilde{F}( 2\epsilon I)
\geq \tilde{f}(x_{0})+c(n,k)\epsilon,
\end{equation*}
where $c(n,k)=2\tilde{F}(I)$ is a positive constant depending only on $n$ and $k$.

The claim is now proved.

\medskip

We are now in position to prove the lemma. Suppose not, then we have 
\begin{equation*}
\max_{\overline{\Omega}}( u-v) >\max_{\partial \Omega}(u-v).
\end{equation*}

In particular, there exists $\epsilon >0$ such that 
\begin{equation*}
\max_{\overline{\Omega}}\left( u_{\epsilon }-v\right) >\max_{\partial \Omega}\left( u_{\epsilon }-v\right).
\end{equation*}

Therefore, there exists $x_0\in \Omega$ such that 
\begin{align*}
u_\epsilon(x_0)-v(x_0)=M=\max_{\overline{\Omega}}\left( u_{\epsilon }-v\right).
\end{align*}

Thus
\begin{align*}
v(x_0)+M=u_\epsilon(x_0),\quad and\quad v+M\geq u_\epsilon,\quad in\quad \Omega.
\end{align*}

By Claim, we have 
\begin{equation*}
\tilde{F}\left( D^2v(x_0)\right)\geq \tilde{f}(x_0)+c(n,k)\epsilon.
\end{equation*}

This contradicts the fact that $v$ is a supersolution. The lemma is now proved.
\end{proof}

\section{Strict positivity of the Hessian}

In this section, we will show that the Hessian is uniformly bounded below by a positive constant. This is the key ingredient in the proof of our main theorem. 
\begin{lemm}\label{Pogolev-1} 
Let $n\geq 2$, $1\leq k<n$, let $\Omega$ be a smooth bounded convex domain. Let $f\in C^{1,1}\left(\overline{\Omega}\times\mathbb{R}\times\mathbb{R}^n\right)$ be a positive function and let $u\in C^4\left(\overline{\Omega}\right) $ be a convex solution of the following Dirichlet problem 
\begin{align*}
\begin{cases}
F\left( D^2u \right)=f(x,u,Du),\quad & in\quad\Omega, \\ 
u=0, & on\quad\partial\Omega.
\end{cases}
\end{align*}
where $F=\frac{\sigma_n}{\sigma_{n-k}}$. Let $\lambda_{min}$ be the smallest eigenvalue of $D^2u$, then we have
\begin{align*}
\frac{(-u)^\beta}{\lambda_{min}}\leq C,
\end{align*}
where $\beta$ and $C$ are positive constants depending only on $n$, $k$, diameter of $\Omega$, $\|u\|_{C^{0,1}\left(\overline{\Omega}\right) }$, $\min_{\overline{\Omega}\times[-M,M]\times\overline{B}_M}f$ and $\|f\|_{C^{1,1}\left(\overline{\Omega}\times[-M,M]\times\overline{B}_M\right)}$. Here $M$ is a universal constant satisfying $\|u\|_{C^{0,1} \left(\overline{\Omega}\right) }\leq M$. 
\end{lemm}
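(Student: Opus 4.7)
The plan is to dualize the problem via the Legendre transform and then prove a Pogorelov-type interior $C^2$ estimate for the resulting genuine Hessian equation, subject to a non-standard but natural boundary condition. The main analytic input will be the concavity inequality of Lemma~\ref{Concavity lemma-hessian}.

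First, let $w(y)$ be the Legendre transform of $u(x)$ on $\Omega^*:=Du(\Omega)$. By (\ref{w-2}) the eigenvalues of $D^2w$ are the reciprocals $\mu_i=1/\lambda_i$ of those of $D^2u$, so the desired bound $(-u)^\beta/\lambda_{\min}(D^2u)\le C$ is equivalent to
\[
\eta^{\beta}\cdot\mu_{\max}(D^2w)\le C,\qquad \eta(y):=w(y)-y\cdot Dw(y)=-u(x(y)).
\]
Specializing (\ref{w-3}) with $k\leftrightarrow n-k$, the equation for $w$ becomes $\sigma_k(D^2w)=1/f$, a bona fide Hessian equation. Crucially, the condition $u\equiv 0$ on $\partial\Omega$ says exactly that $\eta\equiv 0$ on $\partial\Omega^*$ (with $\eta>0$ inside), which will play the role of a Pogorelov-style cut-off.

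Second, on $\Omega^*$ consider the test function
\[
\Phi(y)=\ln\mu_1(D^2w)+N\ln\eta(y),
\]
with $N$ a large constant (depending only on $n,k$) to be fixed. Since $\eta=0$ on $\partial\Omega^*$, $\Phi\to-\infty$ at the boundary, so its maximum is attained at some interior $y_0$; after the standard perturbation to reduce to a simple top eigenvalue, diagonalize $D^2w$ at $y_0$ with $\mu_1\ge\cdots\ge\mu_n>0$ and set $b=\ln\mu_1$. A direct computation from $\eta=w-y\cdot Dw$ gives, in the diagonal frame,
\[
\eta_i=-y_i\mu_i,\qquad \sum_i\sigma_k^{ii}\eta_{ii}=-k/f-\sum_j y_j\,(1/f)_j,
\]
the last identity coming from differentiating the equation, $\sum_i\sigma_k^{ii}w_{iij}=\partial_j\sigma_k(D^2w)=(1/f)_j$. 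The first-order critical condition $\Phi_i(y_0)=0$ then reads $b_i=Ny_i\mu_i/\eta$. Feeding this into the second-order inequality $\sum_i\sigma_k^{ii}\Phi_{ii}\le 0$, and invoking the Jacobi-type lower bound for $b$ produced by Lemma~\ref{Concavity lemma-hessian} (valid once $\mu_1$ exceeds a constant), reduces the problem to a comparison between two positive quadratics in $Ny_i\mu_i/\eta$. For $N$ sufficiently large this forces $\mu_1\eta^N\le C$ at $y_0$, hence globally on $\Omega^*$; combined with $|y|\le\|Du\|_{L^\infty}\le\|u\|_{C^{0,1}}$ this yields the claimed estimate with $\beta=N$.

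The main obstacle is that the cut-off $\eta=w-y\cdot Dw$ is not the familiar $-w$ of the classical Chou--Wang/Pogorelov setup, and its gradient $\eta_i=-y_i\mu_i$ couples the cut-off directly to the very eigenvalues we are trying to bound. The saving point is that this coupling is of the right sign and order: $\sigma_k^{ii}\eta_i^2/\eta^2=\sigma_k^{ii}y_i^2\mu_i^2/\eta^2$ carries a $\mu_i^2$ weight that aligns, after substituting the first-order relation, with the positive gradient term $\sigma_k^{11}b_1^2/\mu_1$ generated by Lemma~\ref{Concavity lemma-hessian}, so that the latter dominates for large $N$ and the maximum principle closes. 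A secondary subtlety is absorbing the $y\cdot D(1/f)$ piece of $\sum_i\sigma_k^{ii}\eta_{ii}$, which is routine once $|y|\le\|u\|_{C^{0,1}}$ and $\|f\|_{C^{1,1}}$ are used.
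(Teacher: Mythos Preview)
Your overall strategy---pass to the Legendre transform, observe that $w$ satisfies a genuine Hessian equation $\sigma_k(D^2w)=f^*$ with the cut-off $\eta=-u^*$ vanishing on $\partial\Omega^*$, and run a Pogorelov-type maximum principle using Lemma~\ref{Concavity lemma-hessian}---is exactly the paper's. The identification $\eta_i=-y_i\mu_i$ and the computation of $\sum_i\sigma_k^{ii}\eta_{ii}$ are both correct. But your test function $\Phi=\ln\mu_1+N\ln\eta$ is too weak: the paper uses
\[
Q=(-u^*)^{\beta}\mu_1\, e^{\frac{\alpha}{2}|Dw|^2},
\]
and the extra factor $e^{\frac{\alpha}{2}|Dw|^2}$ is not cosmetic.

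The problem is the source term. Since $f$ depends on $x$ (and $u,Du$), after Legendre transform $f^*=1/f$ depends on $Dw$, so differentiating the equation twice produces, among other things, $d_{p_1p_1}f^*\cdot w_{11}^2=O(\mu_1^2)$; after dividing by $\mu_1$ this leaves an error $-C\mu_1$ that survives all of the concavity and third-order manipulations (this is the $-C\mu_1$ in the paper's inequality (\ref{General-10})). Your only positive term, obtained from the first-order relation $b_i=Ny_i\mu_i/\eta$, is
\[
(cN^2-N)\sum_i\sigma_k^{ii}\,\frac{y_i^2\mu_i^2}{\eta^2},
\]
which \emph{degenerates at} $y_0=0$ (the image of the minimum of $u$, where $\eta$ is maximal and $\Phi$ may well peak). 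At such a point the inequality becomes $0\ge -C\mu_1$, which is vacuous, and no bound on $\mu_1\eta^N$ follows. More generally, there is no reason for $\sum_i\sigma_k^{ii}y_i^2\mu_i^2/\eta^2$ to dominate $\mu_1$.

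The paper closes this gap via the gradient term: it contributes $\alpha\sum_i\sigma_k^{ii}w_{ii}^2\ge c(n,k)\,\alpha\,\sigma_1\sigma_k\ge C\alpha\mu_1$ by Newton--Maclaurin, a quantity that does \emph{not} degenerate and absorbs $-C\mu_1$ for $\alpha$ large (with $\beta=\alpha^2$ so that the cut-off's bad term $-\beta\sum\sigma_k^{ii}(u_i^*)^2/(u^*)^2$ is itself absorbed). Once you add this factor to your $\Phi$, your sketch becomes essentially the paper's proof; without it, the maximum principle does not close.
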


\begin{proof}
Let $w(y)$ be the Legendre transform of $u(x)$. Then we have
\begin{align}  \label{www}
w(y)=\sup_{x\in \Omega}\left(x\cdot y-u(x)\right)=Dw(y)\cdot y-u(Dw(y)).
\end{align}

Denote $u^*(y)=u(x(y))$ and define 
\begin{align*}
f^*(Dw,u^*,y)=\left( f(x,u,Du)\right) ^{-1}.
\end{align*}

By (\ref{w-1}) and (\ref{w-3}), $w$ is a convex solution of the following boundary value problem 
\begin{align}  \label{equation-w}
\begin{cases}
\sigma_k\left( D^2w\right)=f^*(Dw,u^*,y),\quad & in\quad\Omega^*, \\ 
u^*=0, & on\quad\partial\Omega^*.
\end{cases}
\end{align}

Let $\mu_1$ be the largest eigenvalue of $D^2w$. By (\ref{w-2}), to prove the lemma, we only need to bound $(-u^*)^\beta\mu_1$.

Since the diameter of $\Omega$ and $\|u\|_{C^{0,1}\left( \overline{\Omega}\right) }$ is bounded, by (\ref{w-1}), the diameter of $\Omega^*$ and $|Dw|$ is bounded. By (\ref{www}), we conclude that $|w|$ is bounded. It follows that $\|w\|_{C^{0,1}\left( \overline{\Omega}^*\right) }$ is bounded.

\medskip

If $k=1$, by (\ref{equation-w}) and the fact that $w$ is convex, we conclude that $D^2w$ is bounded. Therefore, we only need to prove the case $1<k<n$. Consider the test function 
\begin{align*}
Q=(-u^*)^\beta \mu_1 e^{\frac{\alpha}{2} |\nabla w|^2},
\end{align*}
where $\alpha$ and $\beta$ are large constants to be determined later.

Since $u$ is convex and $u=0$ on $\partial\Omega$, we conclude that $u<0$ in $\Omega$. Consequently, $-u^*>0$ in $\Omega^*$. Therefore, $Q$ will achieve maximum at an interior point of $\Omega^*$, say $y_0$. At $y_0$, we can choose an orthonormal frame such that $D^2w$ is diagonalized. Without loss of generality, we may assume $\mu_1$ has multiplicity $m$, i.e. 
\begin{align*}
\mu_1=\cdots=\mu_m>\mu_{m+1}\geq \cdots\geq \mu_n.
\end{align*}

In the following, we will use $C$ to denote a universal constant depending only on $n$, $k$, diameter of $\Omega$, $\|u\|_{C^{0,1}\left( \overline{\Omega}\right) }$, $\min_{\overline{\Omega}\times[-M,M]\times\overline{B}_M}f$ and $\|f\|_{C^{1,1}\left(\overline{\Omega}\times[-M,M]\right)\times\overline{B}_M}$. It may change from line to line.

By Lemma 5 in \cite{BCD}, at $y_0$, we have 
\begin{align}  \label{BCDDDD}
\delta_{ab}\cdot (\mu_1)_i=w_{ab i},\quad 1\leq a,b\leq m,
\end{align}
\begin{align*}
(\mu_1)_{ii}\geq w_{11ii}+2\sum_{p>m}\frac{w_{1pi}^2}{\mu_1-\mu_p},
\end{align*}
in the viscosity sense.

At $y_0$, we have 
\begin{align}  \label{General-critical}
0=\beta\frac{u^*_i}{u^*}+\frac{(\mu_1)_i}{\mu_1} +\alpha\sum_j
w_jw_{ji},
\end{align}
\begin{align}  \label{General-1}
0\geq \beta\frac{u^*_{ii}}{u^*}-\beta\frac{(u^*_i)^2}{(u^*)^2}+\frac{(\mu_1)_{ii}}{\mu_1}-\frac{(\mu_1)_i^2}{\mu_1^2}+\alpha\sum_j\left( w_{ji}^2+ w_jw_{jii}\right),
\end{align}
in the viscosity sense.

Plugging (\ref{BCDDDD}) into (\ref{General-1}), we have 
\begin{align*}
0\geq&\ \beta\frac{u^*_{ii}}{u^*}-\beta\frac{(u^*_i)^2}{(u^*)^2} +\frac{w_{11ii}}{\mu_1}+2\sum_{p>m}\frac{w_{1pi}^2}{\mu_1(\mu_1-\mu_p)}-\frac{ w_{11i}^2}{\mu_1^2}+\alpha\sum_j\left( w_{ji}^2+w_jw_{jii}\right).
\end{align*}

Contracting with $\sigma_k^{ii}$, we have 
\begin{align}  \label{General-2}
0\geq &\ \beta\sum_i\frac{\sigma_k^{ii} u^*_{ii}}{u^*}-\beta\sum_i\frac{\sigma_k^{ii}(u^*_i)^2}{(u^*)^2} +\sum_i\frac{\sigma_k^{ii}w_{ii11}}{\mu_1}+2\sum_i\sum_{p>m}\frac{\sigma_k^{ii}w_{1pi}^2}{\mu_1(\mu_1-\mu_p)} \\
&\ -\sum_i\frac{ \sigma_k^{ii}w_{11i}^2}{\mu_1^2}+\alpha\sum_i\sigma_k^{ii}w_{ii}^2+\alpha\sum_{j,i} \sigma_k^{ii}w_jw_{jii}. \notag
\end{align}

Differentiating equation (\ref{equation-w}), we have 
\begin{align*}
\alpha\sum_{j,i} \sigma_k^{ii}w_jw_{jii}=\alpha\sum_j w_jf^*_j=\alpha\sum_jw_j\left(\sum_l d_{p_l} f^*\cdot w_{jl}+ d_{u^*}f^*\cdot u^*_j+d_{y_j}f^*\right).
\end{align*}

Plugging into (\ref{General-2}), we have 
\begin{align}  \label{General-3}
0\geq &\ \beta\sum_i\frac{\sigma_k^{ii} u^*_{ii}}{u^*}-\beta\sum_i\frac{\sigma_k^{ii}(u^*_i)^2}{(u^*)^2} +\sum_i\frac{\sigma_k^{ii}w_{ii11}}{\mu_1}+2\sum_i\sum_{p>m}\frac{\sigma_k^{ii}w_{1pi}^2}{\mu_1(\mu_1-\mu_p)} \\
&\ -\sum_i\frac{ \sigma_k^{ii}w_{11i}^2}{\mu_1^2}+\alpha\sum_i\sigma_k^{ii}w_{ii}^2+\alpha \sum_jw_j\left(d_{p_j} f^*\cdot w_{jj}+d_{u^*}f^*\cdot u^*_j\right)-C\alpha. \notag
\end{align}

Differentiating equation (\ref{equation-w}) twice, we have 
\begin{align}  \label{General-3-1}
&\ \sum_i\sigma_k^{ii}w_{ii11}+\sum_{p,q,r,s}\sigma_k^{pq,rs}w_{pq1}w_{rs1}=f^*_{11} \\
=&\ \left(\sum_j d_{p_j} f^*\cdot w_{j1}+ d_{u^*}f^*\cdot u^*_1+d_{y_1}f^*\right)_1 \notag  \\
=&\ \sum_{j,l}d_{p_jp_l} f^*\cdot w_{j1}w_{l1}+2\sum_j d_{p_ju^*} f^*\cdot w_{j1}u^*_1+2\sum_j d_{p_jy_1}f^*\cdot w_{j1} \notag \\
&\ +\sum_j d_{p_j} f^*\cdot w_{j11}+d_{u^*u^*}f^*\cdot (u^*_1)^2+2d_{u^*y_1}f^*\cdot u^*_1 \notag \\
&\ + d_{u^*}f^*\cdot u^*_{11}+d_{y_1y_1}f^* \notag \\ 
\geq &\  2d_{p_1u^*} f^*\cdot w_{11}u^*_1+\sum_j d_{p_j} f^*\cdot w_{j11}+d_{u^*u^*}f^*\cdot (u^*_1)^2  \notag \\
&\ +2d_{u^*y_1}f^*\cdot u^*_1+ d_{u^*}f^*\cdot u^*_{11}-C\mu_1^2-C.\notag
\end{align}

By (\ref{w-1}), we have 
\begin{align}  \label{u^*}
u^*_i=\frac{\partial u(x(y))}{\partial y_i}=\sum_j u_j \frac{\partial x_j}{\partial y_i}=\sum_j u_j w_{ji}.
\end{align}

Similarly, 
\begin{align}  \label{u^**}
u^*_{ii}=\sum_{j,l} u_{jl} w_{ji}w_{li}+\sum_j u_jw_{jii}.
\end{align}

Together with (\ref{w-2}), we have 
\begin{align*}
u^*_{11}=u_{11}w_{11}^2+\sum_j u_jw_{j11}=w_{11}+\sum_j u_jw_{j11}.
\end{align*}

Plugging the above equations into (\ref{General-3-1}), we have 
\begin{align*}
\sum_i\sigma_k^{ii}w_{ii11}+\sum_{p,q,r,s}\sigma_k^{pq,rs}w_{pq1}w_{rs1}\geq \sum_j d_{p_j} f^*\cdot w_{j11}+d_{u^*}f^*\sum_j u_jw_{j11}-C\mu_1^2-C.
\end{align*}

Plugging into (\ref{General-3}), we have 
\begin{align}  \label{General-4}
0\geq &\ \beta\sum_i\frac{\sigma_k^{ii} u^*_{ii}}{u^*}-\beta\sum_i\frac{\sigma_k^{ii}(u^*_i)^2}{(u^*)^2} -\sum_{p,q,r,s}\frac{\sigma_k^{pq,rs}w_{pq1}w_{rs1}}{\mu_1}  \\
&\ +2\sum_i\sum_{p>m}\frac{\sigma_k^{ii}w_{1pi}^2}{\mu_1(\mu_1-\mu_p)}-\sum_i\frac{ \sigma_k^{ii}w_{11i}^2}{\mu_1^2}+\alpha \sum_i\sigma_k^{ii}w_{ii}^2  \notag \\
&\ +\alpha \sum_j\left(d_{p_j} f^*\cdot w_jw_{jj}+d_{u^*}f^*\cdot w_ju^*_j\right) \notag \\
&\ +\sum_j \left( d_{p_j} f^*+d_{u^*}f^*\cdot u_j\right)\frac{w_{j11}}{\mu_1}-C\mu_1-C\alpha .  \notag
\end{align}

By (\ref{u^*}), we have 
\begin{align*}
&\ \alpha \sum_j\left(d_{p_j} f^*\cdot w_jw_{jj}+ d_{u^*}f^*\cdot w_ju^*_j\right) \\
=&\ \alpha \sum_j\left(d_{p_j} f^*\cdot w_jw_{jj}+ d_{u^*}f^*\cdot w_ju_jw_{jj}\right) \\
=&\ \alpha\sum_j \left( d_{p_j} f^*+ d_{u^*}f^*\cdot u_j\right) w_jw_{jj}.
\end{align*}

Plugging into (\ref{General-4}), we have 
\begin{align}  \label{General-4-1}
0\geq &\ \beta\sum_i\frac{\sigma_k^{ii} u^*_{ii}}{u^*}-\beta\sum_i\frac{\sigma_k^{ii}(u^*_i)^2}{(u^*)^2} -\sum_{p,q,r,s}\frac{\sigma_k^{pq,rs}w_{pq1}w_{rs1}}{\mu_1}  \\
&\ +2\sum_i\sum_{p>m}\frac{\sigma_k^{ii}w_{1pi}^2}{\mu_1(\mu_1-\mu_p)}-\sum_i\frac{\sigma_k^{ii}w_{11i}^2}{\mu_1^2}+\alpha\sum_i\sigma_k^{ii}w_{ii}^2 \notag \\
&\ +\sum_j \left( d_{p_j}f^*+d_{u^*}f^*\cdot u_j\right)\left(\frac{w_{j11}}{\mu_1}+\alpha w_jw_{jj}\right) -C\mu_1-C\alpha . \notag
\end{align}

By (\ref{u^*}), (\ref{u^**}), (\ref{w-2}), Lemma \ref{Sigma_k-Lemma-0} and equation \ref{equation-w}, we have 
\begin{align*}
\beta\sum_i\frac{\sigma_k^{ii} u^*_{ii}}{u^*}=&\ \beta\sum_i\frac{\sigma_k^{ii}}{u^*}\left(u_{ii} w_{ii}^2+\sum_j u_jw_{jii}\right)=\beta\sum_i\frac{\sigma_k^{ii}w_{ii}}{u^*}+\beta\sum_j \frac{u_jf^*_j}{u^*} \\
= &\ \frac{\beta kf^*}{u^*}+ \beta\sum_j\frac{u_j}{u^*}\left(\sum_l d_{p_l}f^*\cdot w_{lj}+d_{u^*}f^*\cdot u^*_j+f^*_{y_j}\right) \\
\geq &\ -\mu_1+\beta\sum_j \frac{u^*_j}{u^*}\left( d_{p_j} f^*+d_{u^*}f^*\cdot u_j\right).
\end{align*}
We have used the fact that $(-u^*)^\beta \mu_1$ is sufficiently large in last line.

Plugging into (\ref{General-4-1}), together with critical equation \ref{General-critical}, we have 
\begin{align}  \label{General-4-2}
0\geq &\ -\beta\sum_i\frac{\sigma_k^{ii}(u^*_i)^2}{(u^*)^2} -\sum_{p,q,r,s}\frac{\sigma_k^{pq,rs}w_{pq1}w_{rs1}}{\mu_1} +2\sum_i\sum_{p>m}\frac{\sigma_k^{ii}w_{1pi}^2}{\mu_1(\mu_1-\mu_p)} \\
&\ -\sum_i\frac{ \sigma_k^{ii}w_{11i}^2}{\mu_1^2}+\alpha\sum_i\sigma_k^{ii}w_{ii}^2-C\mu_1-C\alpha.  \notag
\end{align}

By Lemma \ref{F^ijkl-1}, we have 
\begin{align*}
-\sum_{p,q,r,s} \sigma_k^{pq,rs}w_{pq1}w_{rs1}=-\sum_{p\neq q}\sigma_k^{pp,qq}w_{pp1}w_{qq1}-\sum_{p\neq q}\sigma_k^{pq,qp}w_{pq1}^2.
\end{align*}

Plugging into (\ref{General-4-2}), we have 
\begin{align}  \label{General-5}
0\geq &\ -\beta\sum_i\frac{\sigma_k^{ii}(u^*_i)^2}{(u^*)^2} -\sum_{p\neq q}\frac{\sigma_k^{pp,qq}w_{pp1}w_{qq1}}{\lambda_1}-\sum_{p\neq q} \frac{\sigma_k^{pq,qp}w_{pq1}^2}{\mu_1} \\
&\ +2\sum_i\sum_{p>m}\frac{\sigma_k^{ii}w_{1pi}^2}{\mu_1(\mu_1-\mu_p)}-\sum_i\frac{ \sigma_k^{ii}w_{11i}^2}{\mu_1^2}+\alpha \sum_i\sigma_k^{ii}w_{ii}^2-C\mu_1-C\alpha.  \notag
\end{align}

By Lemma \ref{F^ijkl-1}, we have 
\begin{align}  \label{General-6}
-\sum_{p\neq q} \frac{\sigma_k^{pq,qp}w_{pq1}^2}{\mu_1}\geq -2\sum_{i>m}\frac{\sigma_k^{1i,i1}w_{11i}^2}{\mu_1}= 2\sum_{i>m}\frac{(\sigma_k^{ii}-\sigma_k^{11})w_{11i}^2}{\mu_1(\mu_1-\mu_i)}.
\end{align}

On the other hand, 
\begin{align}  \label{General-7}
2\sum_i\sum_{p>m}\frac{\sigma_k^{ii}w_{1pi}^2}{\mu_1(\mu_1-\mu_p)}\geq&\ 2\sum_{p>m}\frac{\sigma_k^{pp}w_{1pp}^2}{\mu_1(\mu_1-\mu_p)}+2\sum_{p>m}\frac{\sigma_k^{11}w_{1p1}^2}{\mu_1(\mu_1-\mu_p)} \\
=&\ 2\sum_{i>m}\frac{\sigma_k^{ii}w_{ii1}^2}{\mu_1(\mu_1-\mu_i)}+2\sum_{i>m}\frac{\sigma_k^{11}w_{11i}^2}{\mu_1(\mu_1-\mu_i)}. 
\notag
\end{align}

By (\ref{BCDDDD}), we have 
\begin{align}  \label{General-8}
w_{11i}=w_{1i1}=\delta_{li}\cdot (\mu_1)_1=0,\quad 1<i\leq m.
\end{align}

Plugging (\ref{General-6}), (\ref{General-7}) and (\ref{General-8}) into (\ref{General-5}), we have 
\begin{align}  \label{General-9}
0\geq &\ -\beta\sum_i\frac{\sigma_k^{ii}(u^*_i)^2}{(u^*)^2} -\sum_{p\neq q}\frac{\sigma_k^{pp,qq}w_{pp1}w_{qq1}}{\mu_1}+ 2\sum_{i>m}\frac{\sigma_k^{ii}w_{11i}^2}{\mu_1(\mu_1-\mu_i)} \\
&\ +2\sum_{i>m}\frac{\sigma_k^{ii}w_{ii1}^2}{\mu_1(\mu_1-\mu_i)}-\sum_i\frac{ \sigma_k^{ii}w_{11i}^2}{\mu_1^2}+\alpha \sum_i\sigma_k^{ii}w_{ii}^2-C\mu_1-C\alpha  \notag \\
\geq &\ -\beta\sum_i\frac{\sigma_k^{ii}(u^*_i)^2}{(u^*)^2} -\sum_{p\neq q}\frac{\sigma_k^{pp,qq}w_{pp1}w_{qq1}}{\mu_1}+\sum_{i>m}\frac{\sigma_k^{ii}w_{11i}^2}{\mu_1^2}+\sum_{i>m}\frac{\sigma_k^{ii}w_{ii1}^2}{\mu_1^2}  \notag \\
&\ -\frac{ \sigma_k^{11}w_{111}^2}{\mu_1^2} +\alpha \sum_i\sigma_k^{ii}w_{ii}^2-C\mu_1-C\alpha.  \notag
\end{align}

By (\ref{BCDDDD}), we have 
\begin{align*}
w_{ii1}=w_{1ii}=\delta_{li}\cdot (\mu_1)_i=0,\quad 1<i\leq m.
\end{align*}

Together with Lemma \ref{Concavity lemma-hessian} and (\ref{u^*}), we have 
\begin{align*}
&\ -\sum_{p\neq q}\frac{\sigma_k^{pp,qq}w_{pp1}w_{qq1}}{\mu_1}+\sum_{i>m}\frac{\sigma_k^{ii}w_{ii1}^2}{\mu_1^2}- \frac{ \sigma_k^{11}w_{111}^2}{\mu_1^2} \\
\geq &\ -2\frac{(f^*_1)^2}{f^*\mu_1}+ c(n,k)\frac{ \sigma_k^{11}w_{111}^2}{\mu_1^2}\geq c(n,k)\frac{ \sigma_k^{11}w_{111}^2}{\mu_1^2}-C\mu_1,
\end{align*}
where $c(n,k)$ is a positive constant depending only on $n$ and $k$.

Plugging into (\ref{General-9}), together with (\ref{General-8}), we have 
\begin{align}  \label{General-10}
0\geq &\ -\beta\sum_i\frac{\sigma_k^{ii}(u^*_i)^2}{(u^*)^2} +c(n,k)\sum_i\frac{ \sigma_k^{ii}w_{11i}^2}{\mu_1^2}+\alpha \sum_i\sigma_k^{ii}w_{ii}^2-C\mu_1-C\alpha.
\end{align}

By critical equation (\ref{General-critical}), we have 
\begin{align*}
\beta^2\frac{(u^*_i)^2}{(u^*)^2} =\left(\frac{w_{11i}}{\mu_1}+\alpha w_iw_{ii}\right)^2\leq 2\frac{w_{11i}^2}{\mu_1^2}+2\alpha^2w_i^2w_{ii}^2.
\end{align*}

Plugging into (\ref{General-10}), we have 
\begin{align}  \label{General-11}
0\geq &\ \left( c(n,k)-\frac{2}{\beta}\right)\sum_i\frac{\sigma_k^{ii}w_{11i}^2}{\mu_1^2}+\left(\alpha-C\frac{\alpha^2}{\beta}\right)\sum_i \sigma_k^{ii}w_{ii}^2-C\mu_1-C\alpha \\
\geq &\ \left(\alpha-C\right) \sum_i \sigma_k^{ii}w_{ii}^2-C\mu_1-C\alpha,  \notag
\end{align}
by choosing $\beta=\alpha^2$ sufficiently large.

By Lemma \ref{Sigma_k-Lemma-0} and Newton-Maclaurin inequality, we have 
\begin{align*}
\sum_i\sigma_k^{ii}w_{ii}^2=\sigma_1\sigma_k-(k+1)\sigma_{k+1}\geq c(n,k)\sigma_1\sigma_k\geq C\mu_1.
\end{align*}

Plugging into (\ref{General-11}), we conclude that 
\begin{align*}
0\geq C(\alpha-C)\mu_1-C\mu_1-C\alpha.
\end{align*}

It follows that $\mu_1\leq C$ by choosing $\alpha$ sufficiently large. The lemma is now proved.
\end{proof}

\section{Legendre transform}

In this section, we will follow the steps in \cite{Lu23,Lu24} (inspired by Shankar and Yuan \cite{SY-20}) to bound $b=\ln\lambda_1+K_0$ by its integral using Legendre transform. Here $K_0$ is a positive constant such that $b$ is always positive. We note that $K_0$ only depends on $\min{f}$, $n$ and $k$.

We first show that $b$ is a subsolution of a uniformly elliptic equation in the new coordinate system. 
\begin{lemm}
\label{Jacobi-g} Let $n\geq 2$, $1\leq k<n$. Let $f\in C^{1,1}\left( \overline{\Sigma}_2\times \mathbb{R}\right) $ be a positive function and let $u\in C^4\left( \overline{\Sigma}_2\right)$ be a convex solution of (\ref{main-eqn}). For any $x_0\in \Sigma_1$, suppose that $D^2u$ is diagonalized at $x_0$ such that $\lambda_1\geq \cdots\geq \lambda_n$ and $-K_0$ is a uniform lower bound of $\ln\lambda_1$. Let $b=\ln\lambda_1+K_0$, then at $y(x_0)$, we have 
\begin{align*}
\sum_i G^{ii} b^*_{ii}\geq -C,
\end{align*}
in the viscosity sense, where $G=\sigma_{n-k}$ and $C$ is a positive constant depending only on $n$, $k$, diameter of $\Sigma_2$, $\|u\|_{C^{0,1}\left( \overline{\Sigma}_2\right) }$, $\min_{\overline{\Sigma}_2\times [0,2]}f $ and $\|f\|_{C^{1,1}\left( \overline{\Sigma}_2\times [0,2]\right) }$. We use the notation $b^*(y)=b(x(y))$.
\end{lemm}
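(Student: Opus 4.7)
The plan is to combine the Jacobi inequality from Lemma \ref{Jacobi} in the $x$-coordinates with the change-of-variables formulas (\ref{F^{ii}}) for the Legendre transform, and then to use the eigenvalue lower bound from Lemma \ref{Pogolev-1} to absorb the first-order terms generated by the transform.

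First, at $x_0$ I would invoke Lemma \ref{Jacobi} to obtain, in the viscosity sense,
\[
\sum_i F^{ii} b_{ii} \geq c(n,k) \sum_i F^{ii} b_i^2 - C,
\]
where $F=\sigma_n/\sigma_k$. Since $D^2u$ is diagonalized at $x_0$, the identities (\ref{F^{ii}}) at $y_0=y(x_0)$ read
\[
\sum_i F^{ii} b_{ii} = \sum_{i,j} F^{ii} u_{iij}\, b^*_j + F^2 \sum_i G^{ii} b^*_{ii}, \qquad \sum_i F^{ii} b_i^2 = F^2 \sum_i G^{ii} (b^*_i)^2.
\]
Differentiating (\ref{main-eqn}) once in $x_j$ gives $\sum_i F^{ii} u_{iij} = f_{x_j} + f_u u_j$, which is universally bounded since $f\in C^{1,1}$ and $u$ is Lipschitz. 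Substituting yields
\[
F^2 \sum_i G^{ii} b^*_{ii} \geq c(n,k)\, F^2 \sum_i G^{ii}(b^*_i)^2 - C\sum_j |b^*_j| - C.
\]

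Second, I would apply Lemma \ref{Pogolev-1} (with its $k$ replaced by $n-k$ to match our operator) to $v := u - 2$ on $\Sigma_2$, noting that $v=0$ on $\partial\Sigma_2$ and that $v$ solves the same equation with the shifted right-hand side $f(x,v+2)$. Since $2-u\geq 1$ on $\Sigma_1$, this produces a uniform lower bound $\lambda_{\min}(D^2u)\geq c>0$ on $\Sigma_1$; in particular $F=f$ is bounded above and below. Using (\ref{g}) together with the formulas from Lemmas \ref{F^{ijkl}} and \ref{Sigma_k-Lemma-0} one computes $F^2 G^{ii} = F^{ii} u_{ii}^2 = F\lambda_i \sigma_k(\lambda|i)/\sigma_k$, and the bound $\lambda_{\min}\geq c$ then gives $F^2 G^{ii}\geq c>0$ uniformly in $i$. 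Young's inequality now yields $C\sum_j |b^*_j| \leq \tfrac{c(n,k)}{2} F^2 \sum_i G^{ii}(b^*_i)^2 + C$, and dividing by $F^2\geq c>0$ finishes the proof.

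The main obstacle is verifying this uniform positive lower bound $F^2 G^{ii} \geq c > 0$ for every $i$. It splits into two sub-cases: when $i>k$ one has $\sigma_k(\lambda|i) \geq \lambda_1\cdots\lambda_k$ so $\sigma_k(\lambda|i)/\sigma_k$ is bounded below, while when $i \leq k$ the leading term of $\sigma_k(\lambda|i)$ is $\lambda_1\cdots\lambda_{k+1}/\lambda_i$, which gives $\lambda_i \sigma_k(\lambda|i)/\sigma_k \gtrsim \lambda_{k+1}$. In both cases the strict positivity of $\lambda_{\min}(D^2u)$ from Lemma \ref{Pogolev-1} is precisely what prevents the coefficients $G^{ii}$ from degenerating, and is exactly the reason $b^*$ becomes a subsolution of a uniformly elliptic equation as advertised in the introduction.
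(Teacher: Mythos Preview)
Your proposal is correct and follows essentially the same route as the paper: invoke Lemma~\ref{Jacobi}, pass through the Legendre transform via (\ref{F^{ii}}) and the differentiated equation, and then use the eigenvalue lower bound from Lemma~\ref{Pogolev-1} (applied to $u-2$ on $\Sigma_2$) to control the first-order terms by a uniform positive lower bound on $G^{ii}$. The only cosmetic difference is that the paper phrases the last step as ``$|D^2w|$ bounded $\Rightarrow \frac{1}{C}\leq G^{ii}\leq C$'' and absorbs $\sum_j|b^*_j|$ via $-C\sum_i 1/G^{ii}$, whereas you compute $F^2G^{ii}=F\lambda_i\sigma_k(\lambda|i)/\sigma_k$ in the $x$-coordinates and split into the cases $i\leq k$ and $i>k$; both arguments yield the same uniform ellipticity.
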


\begin{proof}
By Lemma \ref{Jacobi}, we have 
\begin{align*}
\sum_iF^{ii}b_{ii}\geq c(n,k)\sum_iF^{ii}b_i^2-C,
\end{align*}
in the viscosity sense, where $C$ is a positive constant depending only on $n$, $k$, diameter of $\Sigma_2$, $\|u\|_{C^{0,1}\left( \overline{\Sigma}_2\right) }$,  $\min_{\overline{\Sigma}_2\times [0,2]}f $ and $\|f\|_{C^{1,1}\left( \overline{\Sigma}_2\times [0,2]\right)}$. 

Together with (\ref{g}) and (\ref{F^{ii}}), we have 
\begin{align*}
\sum_jf_jb^*_j+f^2\sum_iG^{ii} b^*_{ii}\geq c(n,k)f^2\sum_iG^{ii}(b^*_i)^2-C.
\end{align*}

It follows that 
\begin{align*}
\sum_iG^{ii}b^*_{ii}\geq&\ c(n,k)\sum_iG^{ii}(b^*_i)^2-f^{-2}\sum_if_ib^*_i-C \\
\geq&\ c(n,k)\sum_iG^{ii}(b^*_i)^2-C\sum_i|b^*_i|-C  \notag \\
\geq&\ -C\sum_i \frac{1}{G^{ii}}-C.  \notag
\end{align*}

By Lemma \ref{Pogolev-1}, $|D^2w|$ is bounded in $(\Sigma_1)^*$. It follows that 
\begin{align}  \label{g-1}
\frac{1}{C}\leq G^{ii}\leq C,\quad 1\leq i\leq n.
\end{align}

Consequently, 
\begin{align}  \label{g-2}
\sum_iG^{ii}b^*_{ii}\geq -C.
\end{align}

The lemma is now proved.
\end{proof}

We now state the mean value inequality.

\begin{lemm}
\label{mean} Let $n\geq 2$, $1\leq k<n$. Let $f\in C^{1,1}\left( \overline{\Sigma}_2\times \mathbb{R}\right) $ be a positive function and let $u\in C^4\left(\overline{\Sigma}_2\right)$ be a convex solution of (\ref{main-eqn}). Let $b=\ln\lambda_1+K_0$, where $\lambda_1$ is the largest eigenvalue of $D^2u$ and $-K_0$ is a uniform lower bound of $\ln\lambda_1$. Then there exist positive constants $\epsilon$ and $C$ depending only on $n$, $k$, diameter of $\Sigma_2$, $\|u\|_{C^{0,1}\left( \overline{\Sigma}_2\right) }$, $\min_{\overline{\Sigma}_2\times [0,2]}f $ and $\|f\|_{C^{1,1}\left( \overline{\Sigma}_2\times [0,2]\right) }$, such that $B_{20C\epsilon}\subseteq \Sigma_1$ and 
\begin{align*}
\max_{\overline{B}^y_\epsilon} b^*(y)\leq C\int_{B_{2C\epsilon}}b(x)\sigma_k(D^2u(x)) dx+C.
\end{align*}
\end{lemm}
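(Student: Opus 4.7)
The strategy is to read off from Lemma~\ref{Jacobi-g} that $b^*$ is a subsolution of a uniformly elliptic linear PDE in $y$-coordinates, apply a classical local maximum principle in $y$-space, and then push the resulting integral bound back to $x$-space via the Legendre transform, using $dy=\det(D^2u)\,dx=f\,\sigma_k(D^2u)\,dx$.

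First, by Lemma~\ref{Jacobi-g}, $b^*(y)$ satisfies $\sum_i G^{ii}(D^2w)\,b^*_{ii}\ge -C$ in the viscosity sense on $(\Sigma_1)^*$. Lemma~\ref{Pogolev-1} (applied to $u-2$ on $\Sigma_2$, whose Legendre transform is $w$ up to a linear term) gives $|D^2w|\le C$ on $(\Sigma_2)^*$, so by (\ref{g-1}) the coefficients satisfy $\frac{1}{C}\le G^{ii}\le C$ and $L:=\sum_i G^{ii}(D^2w)\,\partial_{y_iy_i}$ is uniformly elliptic there. Set
\[
\widetilde b(y):=b^*(y)+\tfrac{C_0}{2}|y|^2
\]
with $C_0$ chosen so that $\sum_i G^{ii}\cdot C_0\ge C$; then $L\widetilde b\ge 0$ in the viscosity sense on $(\Sigma_1)^*$, and $\widetilde b\ge 0$ since $b\ge 0$ by the choice of $K_0$.

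Next I pick the scale $\epsilon$. Because $Dw$ is $C$-Lipschitz on $(\Sigma_2)^*$ with $Dw(0)=0$, the map $y\mapsto x(y)=Dw(y)$ sends any ball $B^y_{r}$ into the $x$-ball $B_{Cr}$. Choose $\epsilon>0$ so that $B_{20C\epsilon}\subseteq \Sigma_1$ (possible because $0\in\mathrm{int}(\Sigma_1)$); then automatically $Dw(B^y_{2\epsilon})\subseteq B_{2C\epsilon}\subseteq \Sigma_1$, so $B^y_{2\epsilon}\subseteq(\Sigma_1)^*$, putting us in the domain of validity of the subsolution inequality. Now apply the local maximum principle for viscosity subsolutions of uniformly elliptic linear equations in non-divergence form (Caffarelli--Cabr\'e, or Gilbarg--Trudinger Theorem~9.20) to the nonnegative function $\widetilde b$ on $B^y_{2\epsilon}$, obtaining
\[
\max_{\overline B^y_{\epsilon}}\widetilde b \;\le\; C\int_{B^y_{2\epsilon}}\widetilde b(y)\,dy.
\]

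Finally, change variables $y=Du(x)$, so $dy=\det(D^2u)\,dx=\sigma_n(D^2u)\,dx=f(x,u)\,\sigma_k(D^2u)\,dx$. Since $Dw(B^y_{2\epsilon})\subseteq B_{2C\epsilon}$ and $\widetilde b\le b+C$ on this region (because $|y|\le\|Du\|_\infty$ is bounded), we obtain
\[
\int_{B^y_{2\epsilon}}\widetilde b(y)\,dy \;\le\; C\int_{B_{2C\epsilon}}\bigl(b(x)+C\bigr)\,f\,\sigma_k(D^2u)\,dx \;\le\; C\int_{B_{2C\epsilon}}b(x)\sigma_k(D^2u)\,dx+C,
\]
where for the tail term I use $\int f\,\sigma_k\,dx=\int \det D^2u\,dx=|(\Sigma_2)^*|\le C$ since $|Du|$ is bounded. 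Combining this with the local maximum principle and $b^*\le\widetilde b$ yields the stated inequality.

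The main technical hurdle is the interplay between the viscosity notion of subsolution and the change of coordinates; but the subsolution property is already proved directly in $y$-coordinates in Lemma~\ref{Jacobi-g}, so the $y$-side argument is purely a linear uniformly elliptic local maximum principle and the $x$-side enters only through the Jacobian in the integral, which is a change-of-variables computation. The only other delicate point is verifying the quantitative ball inclusions $B^y_{2\epsilon}\subseteq(\Sigma_1)^*$ and $Dw(B^y_{2\epsilon})\subseteq B_{2C\epsilon}$, and both follow immediately from the Lipschitz bound $|D^2w|\le C$ furnished by Lemma~\ref{Pogolev-1}, which is precisely why the strict positivity result of the previous section is indispensable here.
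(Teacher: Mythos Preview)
Your overall strategy is the same as the paper's: invoke Lemma~\ref{Jacobi-g}, add $C|y|^2$ to get a genuine subsolution, apply the local maximum principle in $y$-coordinates, then change variables back using $dy=\det(D^2u)\,dx=f\sigma_k\,dx$. The paper does exactly this.

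There is, however, a real gap in how you handle the ball inclusions. You assert that Lemma~\ref{Pogolev-1} gives $|D^2w|\le C$ on all of $(\Sigma_2)^*$, but that lemma only yields $\lambda_{\min}(D^2u)\ge (2-u)^\beta/C$, which degenerates as $u\to 2$; the uniform bound $|D^2w|\le C$ holds only on $(\Sigma_1)^*$, where $2-u>1$. Consequently your argument for $Dw(B^y_{2\epsilon})\subseteq B_{2C\epsilon}$ and hence $B^y_{2\epsilon}\subseteq(\Sigma_1)^*$ is circular: you are using a Lipschitz bound on $Dw$ that is only valid on the very set you are trying to land in. A related issue is that your choice of $\epsilon$ (``possible because $0\in\mathrm{int}(\Sigma_1)$'') is not quantitative in the allowed data.

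The paper breaks the circularity by arguing in the opposite order. First, using only convexity and the $C^{0,1}$ bound on $u$: if $y\in B^y_{2\epsilon}$ then $|Du(x)|=|y|\le 2\epsilon$, whence $u(x)\le Du(x)\cdot x\le 2\epsilon\cdot\mathrm{diam}(\Sigma_2)<1$ for $\epsilon$ small, so $x\in\Sigma_1$ and thus $B^y_{2\epsilon}\subseteq(\Sigma_1)^*$. Only then does one invoke $D^2u\ge I/C$ on $\Sigma_1$ (equivalently $|D^2w|\le C$ on $(\Sigma_1)^*$) to conclude $x(B^y_{2\epsilon})\subseteq B_{2C\epsilon}$. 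The same Lipschitz bound on $u$ gives the quantitative inclusion $B_{20C\epsilon}\subseteq\Sigma_1$. With this reordering your proof goes through and matches the paper's.
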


\begin{proof}
By (\ref{g-1}), we have 
\begin{align*}
\sum_i G^{ii}\geq C,
\end{align*}
where $C$ is a positive constant depending only on $n$, $k$, diameter of $\Sigma_2$, $\|u\|_{C^{0,1}\left( \overline{\Sigma}_2\right) }$, $\min_{\overline{\Sigma}_2\times [0,2]}f $ and $\|f\|_{C^{1,1}\left( \overline{\Sigma}_2\times [0,2]\right) }$.

Together with (\ref{g-2}), we have 
\begin{align*}
\sum_i G^{ii}\left( b^*(y)+C|y|^2\right)_{ii}\geq 0.
\end{align*}

By (\ref{g-1}), $G^{ii}$ is uniformly elliptic. By local maximum principle (Theorem 4.8 in \cite{CC}), for any $B_{2\epsilon}^y\subseteq (\Sigma_1)^*$,
we have 
\begin{align*}
\max_{\overline{B}_\epsilon^y}\left( b^*(y)+C|y|^2\right) \leq \frac{C}{|{B}_\epsilon^y|}\int_{B_{2\epsilon}^y}\left( b^*(y)+C|y|^2\right) dy.
\end{align*}

Consequently, 
\begin{align*}
\max_{\overline{B}_\epsilon^y}b^*(y)\leq C\int_{B_{2\epsilon}^y} b^*(y)dy+C,
\end{align*}
where in the previous line, $C$ also depends on $\epsilon$.

By Lemma \ref{Pogolev-1}, $D^2u\geq \frac{I}{C}$ in $\Sigma_1$. Thus $y(x)=Du(x)$ is uniformly monotone, i.e. $|y(x_I)-y(x_{II})|\geq \frac{1}{C}|x_I-x_{II}|$. It follows that $x(B_{2\epsilon}^y)\subseteq B_{2C\epsilon}$. Consequently, 
\begin{align*}
\max_{\overline{B} _\epsilon^y} b^*(y)\leq C \int_{B_{2C\epsilon}} b(x) \det\left( D^2u\right) dx+C\leq C\int_{B_{2C\epsilon}}b\sigma_k dx+C.
\end{align*}

We now show that there exists $\epsilon>0$ such that $B_{2\epsilon}^y \subseteq (\Sigma_1)^*$. In fact, for any $y\in B_{2\epsilon}^y$, we have $|y|=|Du(x)|\leq 2\epsilon$. It follows that $|u(x)-u(0)|\leq C\epsilon$. Consequently, $x\in \Sigma_1$ by choosing $\epsilon$ sufficiently small.

We now check that $B_{20C\epsilon}\subseteq \Sigma_1$. Since $\|u\|_{C^{0,1}} $ is bounded, for any $x\in B_{20C\epsilon}$, $|u(x)-u(0)|\leq C\epsilon$. Consequently, $x\in \Sigma_1$ by choosing $\epsilon$ sufficiently small.

The lemma is now proved.
\end{proof}

\section{Proof of Theorem \ref{theorem-1}}

In this section, we will finish the proof of Theorem \ref{theorem-1}. 

Since $F^{ij}$ is not divergence free, we will choose $H^{ij}=\sigma_kF^{ij}$ to perform the integration by parts as in \cite{Lu23,Lu24}.

Define 
\begin{align}  \label{H1}
H^{ij}=\sigma_kF^{ij}=\sigma_n^{ij}-\frac{\sigma_n\sigma_k^{ij}}{\sigma_k}.
\end{align}

Before we start to prove the main theorem, we first prove an induction lemma. The lemma will help us to simplify the argument in the integration by
parts.

\begin{lemm}
\label{Cauchy} Let $n\geq 2$, $1\leq k<n$. Let $f\in C^{1,1}\left( \overline{\Sigma}_2\times \mathbb{R}\right) $ be a positive function and let $u\in C^4\left( \overline{\Sigma}_2\right) $ be a convex solution of (\ref{main-eqn}). For any $x_0\in \Sigma_1$, suppose that $D^2u$ is diagonalized at $x_0$.  Let $b=\ln\lambda_1+K_0$, where $\lambda_1$ is the largest eigenvalue of $D^2u$ and $-K_0$ is a uniform lower bound of $\ln\lambda_1$. Then at $x_0$, we have 
\begin{align*}
\sum_i |b_i\sigma_l^{ii}|\leq C\epsilon_0\sum_i H^{ii}b_i^2+\frac{C}{\epsilon_0}\sigma_k,\quad \forall 1\leq l\leq k,
\end{align*}
where $\epsilon_0$ is any positive constant, and $C$ is a positive constant depending only on $n$, $k$, diameter of $\Sigma_2$, $\|u\|_{C^{0,1}\left( \overline{\Sigma}_2\right) }$, $\min_{\overline{\Sigma}_2\times [0,2]}f $ and $\|f\|_{C^{1,1}\left( \overline{\Sigma}_2\times [0,2]\right) }$. 
\end{lemm}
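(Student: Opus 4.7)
The plan is to reduce the inequality to a Maclaurin estimate on the eigenvalues $\mu_i:=1/\lambda_i$ of the Legendre dual $D^2w=(D^2u)^{-1}$. First, the weighted Cauchy--Schwarz splitting
\[
|b_i|\,\sigma_l^{ii}=\bigl(|b_i|\sqrt{H^{ii}}\bigr)\cdot\frac{\sigma_l^{ii}}{\sqrt{H^{ii}}}\le \frac{\epsilon_0}{2}H^{ii}b_i^2+\frac{1}{2\epsilon_0}\frac{(\sigma_l^{ii})^2}{H^{ii}},
\]
summed over $i$, reduces the claim to the pointwise bound $\sum_i(\sigma_l^{ii})^2/H^{ii}\le C\sigma_k$ for every $1\le l\le k$.

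Next, I would rewrite $H^{ii}$. From (\ref{H1}) and the identity $\sigma_k=\lambda_i\sigma_{k-1}(\lambda|i)+\sigma_k(\lambda|i)$ in Lemma \ref{Sigma_k-Lemma-0} one computes $H^{ii}=\sigma_n^{ii}\sigma_k(\lambda|i)/\sigma_k$. Using the standard duality $\sigma_m(\lambda|i)=\sigma_n^{ii}\,\sigma_{n-1-m}(\mu|i)$, obtained by factoring $\prod_{j\ne i}\lambda_j$ out of the $m$-th symmetric function, the target ratio becomes
\[
\frac{(\sigma_l^{ii})^2}{H^{ii}}=\sigma_k\cdot\frac{\sigma_{n-l}(\mu|i)^2}{\sigma_{n-1-k}(\mu|i)},
\]
so it suffices to show the dimensionless bound $\sigma_{n-l}(\mu|i)^2/\sigma_{n-1-k}(\mu|i)\le C$ for every $i$ and every $1\le l\le k$.

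This is where Lemma \ref{Pogolev-1} is decisive: applied to $u-2$ on $\Sigma_2$, it yields a uniform upper bound $\mu_i\le C$ at every $x_0\in\Sigma_1$, so all $\sigma_m(\mu|i)$ are universally bounded. Since $\mu|i$ consists of $n-1$ strictly positive numbers and $l\le k$ gives $n-l>n-1-k$, Newton--Maclaurin produces
\[
\Bigl(\tfrac{\sigma_{n-l}(\mu|i)}{\binom{n-1}{n-l}}\Bigr)^{1/(n-l)}\le \Bigl(\tfrac{\sigma_{n-1-k}(\mu|i)}{\binom{n-1}{n-1-k}}\Bigr)^{1/(n-1-k)}.
\]
Squaring and dividing by $\sigma_{n-1-k}(\mu|i)$ gives
\[
\frac{\sigma_{n-l}(\mu|i)^2}{\sigma_{n-1-k}(\mu|i)}\le c(n,k,l)\,\sigma_{n-1-k}(\mu|i)^{(n+1+k-2l)/(n-1-k)}\le C,
\]
since the exponent is nonnegative for $l\le k$ and $\sigma_{n-1-k}(\mu|i)$ is bounded above. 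The borderline case $k=n-1$ is trivial: the denominator is $\sigma_0(\mu|i)=1$ and $\sigma_{n-l}(\mu|i)^2\le C$ is immediate from boundedness of $\mu$.

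The main obstacle is that in the original $\lambda$ variables one faces the singularity $\lambda_1\to\infty$, which blocks any direct Maclaurin-type argument. Passing to the dual variable $\mu$ compactifies the problem --- this is precisely what the strict positivity of the Hessian in Lemma \ref{Pogolev-1} provides --- and afterwards the required estimate is a clean application of Newton--Maclaurin to the $n-1$ positive numbers $\mu|i$.
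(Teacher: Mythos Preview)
Your proof is correct. Both your argument and the paper's use the same two inputs---the Young/Cauchy--Schwarz split and the lower bound $\lambda_n\ge C$ from Lemma \ref{Pogolev-1}---but they diverge in how the residual term is controlled.

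The paper works directly in the $\lambda$ variables and splits into three index ranges $1\le i\le l$, $l<i\le k$, $i\ge k+1$. In each range it pairs a crude upper bound on $\sigma_l^{ii}$ (either $\sigma_l/\lambda_i$ or $\sigma_{l-1}$) with a matching lower bound on $H^{ii}$ (either $\sigma_{k+1}/\lambda_i^2$ or $\sigma_k/\lambda_i$), and then uses $\lambda_n\ge C$ to reduce the leftover ratios such as $\sigma_l^2/\sigma_{k+1}$ or $\lambda_i^2\sigma_{l-1}^2/\sigma_{k+1}$ to $\sigma_k$. This is elementary but requires tracking cases.

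Your route is cleaner: after the uniform split you rewrite the entire residual as $\sigma_k\cdot\sigma_{n-l}(\mu|i)^2/\sigma_{n-1-k}(\mu|i)$ via the duality $\sigma_m(\lambda|i)=\sigma_n^{ii}\sigma_{n-1-m}(\mu|i)$, and then a single Newton--Maclaurin inequality plus the bound $\mu_j\le C$ finishes all indices and all $l$ at once. The passage to the dual variable $\mu$ is exactly what turns the \emph{lower} bound on $\lambda$ into an \emph{upper} bound, which is the form Maclaurin needs. The price is that one must handle $k=n-1$ separately (the denominator becomes $\sigma_0=1$), which you noted. Your approach buys a case-free argument and makes the role of Lemma \ref{Pogolev-1} more transparent; the paper's approach stays closer to the $\lambda$ coordinates and avoids introducing the dual picture just for this lemma.
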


\begin{proof}
Without loss of generality, we may assume $\lambda_1\geq \cdots\geq \lambda_n $.

Note that by Lemma \ref{Pogolev-1}, at $x_0$, we have
\begin{align}\label{HHHHH}
\lambda_n\geq C,
\end{align}
where $C$ is a positive constant depending only on $n$, $k$, diameter of $\Sigma_2$, $\|u\|_{C^{0,1}\left( \overline{\Sigma}_2\right) }$, $\min_{\overline{\Sigma}_2\times [0,2]}f $ and $\|f\|_{C^{1,1}\left( \overline{\Sigma}_2\times [0,2]\right) }$. 

By (\ref{H1}) and Lemma \ref{Sigma_k-Lemma-0}, we have 
\begin{align*}  
H^{ii}=\sigma_n^{ii}-\frac{\sigma_n\sigma_k^{ii}}{\sigma_k}=f\cdot\frac{\sigma_k(|i)}{\lambda_i}.
\end{align*}

For $1\leq i\leq k$, we have 
\begin{align*}
H^{ii}=f\cdot\frac{\sigma_k(|i)}{\lambda_i}\geq C\frac{\sigma_{k+1}}{\lambda_i^2}.
\end{align*}

For $i\geq k+1$, we have 
\begin{align*}
H^{ii}=f\cdot\frac{\sigma_{k}(|i)}{\lambda_i}\geq C\frac{\sigma_{k}}{\lambda_i}.
\end{align*}

For $1\leq i\leq l$, by (\ref{HHHHH}) and the fact that $1\leq l\leq k$, we have 
\begin{align*}
|b_i\sigma_l^{ii}|\leq C|b_i|\frac{\sigma_l}{\lambda_i}\leq C\epsilon_0 \frac{\sigma_{k+1}b_i^2}{\lambda_i^2}+\frac{C}{\epsilon_0}\frac{\sigma_l^2}{\sigma_{k+1}}\leq C\epsilon_0 H^{ii}b_i^2+\frac{C}{\epsilon_0} \sigma_l.
\end{align*}

For $l<i\leq k$, by (\ref{HHHHH}) and the fact that $1\leq l\leq k$, we have 
\begin{align*}
|b_i\sigma_l^{ii}|\leq C|b_i|\sigma_{l-1}\leq C\epsilon_0 \frac{\sigma_{k+1}b_i^2}{\lambda_i^2}+\frac{C}{\epsilon_0}\frac{\lambda_i^2\sigma_{l-1}^2}{\sigma_{k+1}}\leq C\epsilon_0 H^{ii}b_i^2+\frac{C}{\epsilon_0}\sigma_l.
\end{align*}

For $i\geq k+1$, by (\ref{HHHHH}) and the fact that $1\leq l\leq k$, we have 
\begin{align*}
|b_i\sigma_l^{ii}|\leq C|b_i|\sigma_{l-1}\leq C\epsilon_0 \frac{\sigma_{k}b_i^2}{\lambda_i}+\frac{C}{\epsilon_0}\frac{\lambda_i\sigma_{l-1}^2}{\sigma_{k}}\leq C\epsilon_0 H^{ii}b_i^2+\frac{C}{\epsilon_0}\sigma_{l-1}.
\end{align*}

Therefore, 
\begin{align*}
\sum_i |b_i\sigma_l^{ii}|\leq C\epsilon_0\sum_i H^{ii}b_i^2+\frac{C}{\epsilon_0}\sigma_{k},\quad \forall 1\leq l\leq k.
\end{align*}

The lemma is now proved.
\end{proof}

Proof of Theorem \ref{theorem-1}.

\begin{proof}
For the sake of simplicity, we will use $C$ to denote a universal constant depending only on $n$, $k$, diameter of $\Sigma_2$, $\|u\|_{C^{0,1}(\overline{\Sigma}_2)}$, $\min_{\overline{\Sigma}_2\times [0,2]}f $ and $\|f\|_{C^{1,1}\left( \overline{\Sigma}_2\times [0,2]\right)}$.

Let $b=\ln \lambda _1+K_0$. Let $\delta >0$ with $B_{10\delta }\subseteq \Sigma _1$. Let $\varphi $ be a cutoff function such that $\varphi =1$ on $B_\delta $ and $\varphi =0$ outside $B_{2\delta }$. By Lemma \ref{Sigma_k-Lemma-0} and the fact that $\sum_{i}(\sigma _{k}^{ij})_{i}=0$, we have 
\begin{align*}
\int_{B_{\delta }}b\sigma _{k}dx\leq & \ \int_{B_{2\delta }}\varphi b\sigma_{k}dx=\frac{1}{k}\sum_{i,j}\int_{B_{2\delta }}\varphi b\sigma_{k}^{ij}u_{ij}dx \\
=& \ -\frac{1}{k}\sum_{i,j}\int_{B_{2\delta }}\left( \varphi _{i}b+\varphi b_{i}\right) \sigma _{k}^{ij}u_{j}dx \\
\leq & \ C\int_{B_{2\delta }}b\sigma _{k-1}dx-\frac{1}{k}\sum_{i,j}\int_{B_{2\delta }}\varphi b_{i}\sigma _{k}^{ij}u_{j}dx.
\end{align*}

By Lemma \ref{Cauchy}, we have 
\begin{equation*}
\left\vert -\frac{1}{k}\sum_{i,j}\int_{B_{2\delta }}\varphi b_{i}\sigma_{k}^{ij}u_{j}dx\right\vert \leq C\sum_{i,j}\int_{B_{2\delta}}H^{ij}b_{i}b_{j}dx+C\int_{B_{2\delta }}\sigma _{k}dx.
\end{equation*}

It follows that 
\begin{equation*}
\int_{B_{\delta }}b\sigma _{k}dx\leq C\int_{B_{\delta }}b\sigma
_{k-1}dx+C\sum_{i,j}\int_{B_{2\delta }}H^{ij}b_{i}b_{j}dx+C\int_{B_{2\delta
}}\sigma _{k}dx.
\end{equation*}

With the help of Lemma \ref{Cauchy}, we can repeat the process and obtain 
\begin{align}\label{p1-1}
\int_{B_{\delta }}b\sigma _{k}dx\leq C\sum_{i,j}\int_{B_{3\delta
}}H^{ij}b_{i}b_{j}dx+C\int_{B_{3\delta }}\sigma _{k}dx.
\end{align}

By (\ref{H1}) and Lemma \ref{Jacobi}, we have 
\begin{align*}
\sum_{i,j} H^{ij}b_{ij}\geq c(n,k)\sum_{i,j} H^{ij}b_ib_j -C\sigma_k,
\end{align*}
in the viscosity sense.

By Theorem 1 in \cite{I}, $b$ also satisfies the above inequality in the
distribution sense. Let $\Phi $ be a cutoff function such that $\Phi =1$ on $%
B_{3\delta }$ and $\Phi =0$ outside $B_{4\delta }$. Then 
\begin{align*}
& \ \sum_{i,j}\int_{B_{3\delta }}H^{ij}b_{i}b_{j}dx\leq\sum_{i,j}\int_{B_{4\delta }}\Phi ^{2}H^{ij}b_{i}b_{j}dx \\
\leq & \ \frac{1}{c(n,k)}\sum_{i,j}\int_{B_{4\delta }}\Phi^{2}H^{ij}b_{ij}dx+C\int_{B_{4\delta }}\sigma _{k}dx \\
\leq & \ -\frac{2}{c(n,k)}\sum_{i,j}\int_{B_{4\delta }}\Phi \Phi_{j}H^{ij}b_{i}dx-C\sum_{i,j}\int_{B_{4\delta }}\Phi^{2}(H^{ij})_{j}b_{i}dx+C\int_{B_{4\delta }}\sigma _{k}dx \\
\leq & \ \frac{1}{2}\sum_{i,j}\int_{B_{4\delta }}\Phi^{2}H^{ij}b_{i}b_{j}dx+C\sum_{i,j}\int_{B_{4\delta }}\Phi _{i}\Phi_{j}H^{ij}dx+C\sum_{i,j}\int_{B_{4\delta }}\Phi ^{2}f_{j}\sigma
_{k}^{ij}b_{i}dx \\
& \ +C\int_{B_{4\delta }}\sigma _{k}dx,
\end{align*}
we have used the fact that $\sum_{j}(H^{ij})_{j}=\sum_{j}\left( (\sigma_{n}^{ij})_{j}-(f\sigma _{k}^{ij})_{j}\right) =-\sum_{j}f_{j}\sigma _{k}^{ij}$ in the last inequality.

Consequently, 
\begin{equation*}
\frac{1}{2}\sum_{i,j}\int_{B_{4\delta }}\Phi ^{2}H^{ij}b_{i}b_{j}dx\leq C\sum_{i,j}\int_{B_{4\delta }}\Phi _{i}\Phi_{j}H^{ij}dx+C\sum_{i,j}\int_{B_{4\delta }}\Phi ^{2}f_{j}\sigma
_{k}^{ij}b_{i}dx+C\int_{B_{4\delta }}\sigma _{k}dx.
\end{equation*}

Together with Lemma \ref{Sigma_k-Lemma-0}, Lemma \ref{Cauchy} and the fact
that $H^{ij}=\sigma _{n}^{ij}-f\sigma _{k}^{ij}$ , we have 
\begin{align*}
& \ \frac{1}{2}\sum_{i,j}\int_{B_{4\delta }}\Phi ^{2}H^{ij}b_{i}b_{j}dx \\
\leq & \ C\sum_{i,j}\int_{B_{4\delta }}\Phi _{i}\Phi _{j}H^{ij}dx+C\epsilon_{0}\sum_{i,j}\int_{B_{4\delta }}\Phi ^{2}H^{ij}b_{i}b_{j}dx+\frac{C}{\epsilon_{0}}\int_{B_{4\delta }}\sigma _{k}dx \\
\leq & \ C\int_{B_{4\delta }}\left( C\sigma _{n-1}-C\sigma _{k-1}\right) dx+\frac{1}{4}\sum_{i,j}\int_{B_{4\delta }}\Phi^{2}H^{ij}b_{i}b_{j}dx+C\int_{B_{4\delta }}\sigma _{k}dx \\
\leq & \ \frac{1}{4}\sum_{i,j}\int_{B_{4\delta }}\Phi^{2}H^{ij}b_ib_jdx+C\int_{B_{4\delta }}\sigma _{n-1}dx,
\end{align*}
we have used the fact that $\lambda _{n}\geq C$ in the last inequality.

Consequently, 
\begin{align*}
\frac{1}{4}\sum_{i,j}\int_{B_{4\delta}} \Phi^2H^{ij}b_ib_jdx\leq C\int_{B_{4\delta}}\sigma_{n-1}dx.
\end{align*}

Plugging into (\ref{p1-1}), we have 
\begin{equation*}
\int_{B_{\delta }}b\sigma _{k}dx\leq C\int_{B_{4\delta }}\sigma _{n-1}dx.
\end{equation*}

Let $\Psi$ be a cutoff function such that $\Psi=1$ on $B_{4\delta}$ and $\Psi=0$ outside $B_{5\delta}$. Then 
\begin{align*}
\int_{B_{4\delta}}\sigma_{n-1}dx\leq \int_{B_{5\delta}}\Psi\sigma_{n-1}dx=-\frac{1}{n-1}\sum_{i,j}\int_{B_{5\delta}}\Psi_i\sigma_{n-1}^{ij}u_jdx\leq C\int_{B_{5\delta}}\sigma_{n-2}dx.
\end{align*}

Thus 
\begin{align*}
\int_{B_\delta}b\sigma_kdx\leq C\int_{B_{5\delta}}\sigma_{n-2}dx.
\end{align*}

Repeating the process, we have 
\begin{align*}
\int_{B_\delta}b\sigma_kdx\leq C\int_{B_{6\delta}}\sigma_1dx\leq C.
\end{align*}

Let $\delta=2C\epsilon$, together with Lemma \ref{mean}, we have 
\begin{align*}
\max_{\overline{B}_\epsilon^y}b^*(y)\leq C.
\end{align*}

It follows that $|D^2u(x(y))|\leq C$ for all $y\in B^y_\epsilon$. Consequently, $D^2w(y)\geq \frac{I}{C}$ for all $y\in B^y_\epsilon$. Thus $x(y)=Dw(y)$ is uniformly monotone, i.e. $|x(y_I)-x(y_{II})|\geq \frac{1}{C}|y_I-y_{II}|$. It follows that $y(B_{\frac{\epsilon}{C}})\subseteq B^y_\epsilon$. Let $\tau=\frac{\epsilon}{C}$, then 
\begin{align*}
\max_{ \overline{B}_\tau} |D^2u|\leq C.
\end{align*}

The theorem is now proved.
\end{proof}

\section{Strict convexity of viscosity solutions}

In this section, we will prove the strict convexity of viscosity solutions by following the method of Urbas \cite{Urbas88}, and Collins and Mooney \cite{Mooney-Collins}.

\subsection{$\alpha>1-\frac{2}{n-k}$ and $p>\frac{( n-1)( n-k) }{2}$}

In this subsection, we will prove strict convexity by following the method of Urbas \cite{Urbas88}.
\begin{lemm}\label{convex}
Let $n\geq 4$, $1\leq k\leq n-3$ and let $\Omega$ be a bounded domain. Let $u$ be viscosity subsolution of 
\begin{equation*}
\frac{\sigma _n}{\sigma _k}\left( D^2u\right) =\mu, \quad in\quad\Omega,
\end{equation*}
where $\mu $ is a positive constant. 

Suppose one of the following holds:
\begin{enumerate}
\item $u\in C^{1,\alpha }\left( \overline{\Omega}\right) $ for some $\alpha >1-\frac{2}{n-k}$;
\item $u\in W^{2,p}( \Omega ) $ for some $p>\frac{( n-1)( n-k) }{2}$.
\end{enumerate}

Then the graph of $u$ is strictly convex. In particular, for any ball $B_R ( x_0) \subseteq \Omega $ and any $0<\rho \leq \frac{R}{4}$, we have
\begin{equation*}
\min_{x\in \partial B_\rho ( x_0) }\left( u( x)-u( x_0) -Du( x_0) \cdot ( x-x_0) \right) \geq C>0,
\end{equation*}
where $C$ is a positive constant depending on $n,k,\alpha ,\mu, R ,\rho ,\| u\|_{C^{1,\alpha }\left( \overline{\Omega}\right) }$ for case $(1)$, and on $n,k,p,\mu ,R,\rho ,\| u\| _{W^{2,p} ( \Omega ) }$ for case $(2)$.
\end{lemm}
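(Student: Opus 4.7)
We adapt Urbas's \cite{Urbas88} strategy for the Monge--Amp\`ere analog to the Hessian quotient setting. Assume for contradiction that the stated lower bound fails. Then the function $v(x) := u(x) - u(x_0) - Du(x_0) \cdot (x - x_0)$ is a nonnegative convex viscosity subsolution of $\sigma_n/\sigma_k(D^2 v) \geq \mu$ that vanishes both at $x_0$ and at some point on $\partial B_\rho(x_0)$. By convexity $v=0$ on the connecting segment, so the contact set $S := \{v = 0\}$ is convex of some dimension $d \geq 1$. Translating and rotating, we arrange $x_0 = 0$, $0$ in the relative interior of $S$, and $\operatorname{span}(e_1, \ldots, e_d)$ as the affine hull of $S$; write $x = (x', x'')$ with $x' \in \mathbb{R}^d$, $x'' \in \mathbb{R}^{n-d}$. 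The vanishing of $\nabla v$ on the relative interior of $S$ yields a decay bound: in case (1), integrating the $\alpha$-H\"older gradient along rays transverse to $S$ gives $v(x) \leq C|x''|^{1+\alpha}$ in a cylindrical neighborhood of $0$; in case (2), Fubini gives $v(x', \cdot) \in W^{2,p}(B_r^{n-d})$ for almost every $x'$ in the relative interior of $S$, and Morrey's embedding $W^{2,p}(B_r^{n-d}) \hookrightarrow C^{1, 1-(n-d)/p}$ combined with the vanishing of $v$ and $\nabla_{x''} v$ at the slice center yields $v(x', x'') \leq C|x''|^{2-(n-d)/p}$, the hypothesis $p > (n-1)(n-k)/2$ being exactly what forces this decay exponent to exceed $2 - 2/(n-k)$ in the critical case $d = 1$.

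Next, fix an exponent $\beta$ with $1 - 2/(n-k) < \beta < \alpha$ (or, in case (2), the analogous Morrey exponent). For parameters $\epsilon, \eta, \delta > 0$ to be optimized, consider the smooth convex test function
\[
\varphi(x) = \frac{\epsilon}{1 + \beta}\Bigl[(|x''|^2 + \eta^2)^{(1+\beta)/2} - \eta^{1+\beta}\Bigr] + \delta|x'|^2.
\]
By the decay from the previous step and the inequality $\beta < \alpha$, in a suitable parameter regime the function $\varphi - v$ takes negative values near the origin (where $v$, bounded by $|x''|^{1+\alpha}$, can still exceed the essentially quadratic $\varphi$) while remaining positive on $\partial B_r$, so $\varphi - v$ attains its minimum at some interior point $x_* \in B_r$. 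Subtracting this minimum from $\varphi$ produces an admissible $C^2$ test function $\tilde\varphi \ge v$ with $\tilde\varphi(x_*) = v(x_*)$. At $x_*$, $D^2\varphi$ has $d$ eigenvalues equal to $2\delta$ in the $x'$-directions and $n - d$ eigenvalues of order $\epsilon(|x''_*|^2 + \eta^2)^{(\beta-1)/2}$ in the $x''$-directions. A direct computation of $\sigma_n$ and $\sigma_k$ of this block-diagonal Hessian, under the parameter scaling dictated by $\beta > 1 - 2/(n-k)$, then gives $\sigma_n/\sigma_k(D^2\varphi(x_*)) < \mu$, contradicting the viscosity subsolution property of $v$ and completing the proof.

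The main technical obstacle is the three-parameter balance in the barrier step. One must simultaneously arrange $\varphi \geq v$ on a macroscopic ball (pushing $\epsilon, \delta$ up), an interior touching point (requiring the at-most-quadratic growth of $\varphi$ transverse to $S$ to compete with the sub-quadratic $|x''|^{1+\alpha}$ decay of $v$, which is why the touching point is pushed off the contact set), and $\sigma_n/\sigma_k(D^2\varphi(x_*)) < \mu$ (requiring $\delta$ small so that $D^2\varphi$ is effectively degenerate in the $x'$-directions). The threshold $\beta > 1 - 2/(n-k)$ is precisely the regularity level at which these three constraints admit a simultaneous solution, and it matches the hypothesis $\alpha > 1 - 2/(n-k)$ (case (1)) or its Morrey analogue $p > (n-1)(n-k)/2$ (case (2)). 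Sharpness of this exponent is certified by the Pogorelov-type example in \cite{Lu24}.
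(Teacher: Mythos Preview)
Your outline has the right flavor but contains two genuine gaps.

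First, the very first step conflates the qualitative and quantitative statements. The lemma asks for a uniform lower bound $\lambda \ge C(n,k,\alpha,\mu,R,\rho,\|u\|)$; its negation is not ``$v$ vanishes at a point of $\partial B_\rho$'' but merely ``$\lambda$ is smaller than any such constant''. Your contradiction argument, which begins by assuming the contact set has dimension $d\ge 1$, therefore proves only $\lambda>0$, i.e.\ qualitative strict convexity. Upgrading to the quantitative estimate would require an additional compactness step, which you have not supplied.

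Second, the barrier touching step is not justified. You claim ``$\varphi-v$ takes negative values near the origin, where $v$, bounded by $|x''|^{1+\alpha}$, can still exceed the essentially quadratic $\varphi$''. But $v\le C|x''|^{1+\alpha}$ is an \emph{upper} bound on $v$; it gives no information that $v$ is large enough anywhere to exceed $\varphi$, so you cannot force an interior touching point, let alone locate it off $S$. (If instead $\varphi\ge v$ throughout, then $\varphi$ touches $v$ from above at the origin, and the Hessian computation must be carried out there --- contrary to what you write.) In case~(2) there is a further issue: Fubini gives $v(x',\cdot)\in W^{2,p}$ only for a.e.\ $x'$ with no uniform control on the slice norm, so your decay constant is not uniform in $x'$.

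The paper's proof sidesteps all of this by arguing \emph{directly} rather than by contradiction. One sets $\lambda=\min_{\partial B_\rho}v$, rotates so the minimum is attained at $(0,\rho)$, and uses the $C^{1,\alpha}$ hypothesis (or, in case~(2), Sobolev embedding on just two $(n{-}1)$-dimensional slices $\{x_n=\rho_1\}$, $\{x_n=\rho_2\}$ selected via the integral mean value theorem) to bound $v$ on the top and bottom of a slab $\{|x'|<\rho,\ 0<x_n<\rho\}$. A purely quadratic barrier
\[
w = A|x'|^2 + C_0A^{-\gamma} + \tfrac{\lambda x_n}{\rho} + Bx_n(x_n-\rho)
\]
with $B\sim A^{-(n-k-1)}\mu$ then satisfies $w\ge v$ on the slab boundary and $\sigma_n/\sigma_k(D^2w)\le\mu$; comparison gives $w\ge v$ inside, and evaluating at $(0,\rho/2)$ yields the explicit lower bound on $\lambda$. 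The condition $\alpha>1-\tfrac{2}{n-k}$ enters only as $\gamma=\tfrac{1+\alpha}{1-\alpha}>n-k-1$, which lets the error $C_0A^{-\gamma}$ be absorbed. No contact-set geometry, no three-parameter barrier, and the estimate is quantitative from the outset.
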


Proof of Lemma \ref{convex} for case $(1)$.

\begin{proof}
Without loss of generality, we may assume $x_0=0$. By subtracting a linear function, we may assume $u(0) =0$ and $Du( 0) =0$.  Let $0< \rho \leq \frac{R}{4}$ be fixed. Suppose
\begin{equation*}
\lambda =\min_{\partial B_\rho}u.
\end{equation*}

By rotating the coordinate, we may assume 
\begin{equation*}
u( 0,\cdots ,0,\rho) =\lambda.
\end{equation*}

Let $M$ be a positive constant satisfying
\begin{align*}
\|u\|_{C^{1,\alpha}\left(\overline{\Omega}\right)}\leq M.
\end{align*}

For the sake of simplicity, denote 
\begin{align*}
x^\prime=( x_1,\cdots,x_{n-1}).
\end{align*}

Since $Du\in C^\alpha$, we have
\begin{align*}
|Du(x^\prime,0)|=|Du(x^\prime,0)-Du(0)|\leq M|x^\prime|^\alpha.
\end{align*}

Together with mean value theorem, we have
\begin{align*}
|u(x^\prime,0)|=|u(x^\prime,0)-u(0)|=|Du(\xi,0)||x^\prime|\leq M|x^\prime|^{1+\alpha}.
\end{align*}

It follows that
\begin{align}\label{e1}
u(x^\prime,0)\leq M|x^\prime|^{1+\alpha}.
\end{align}

Since $u(0,\rho)=\min_{\partial B_\rho }u$, we have $D_{x^\prime}u(0,\rho)=0$. Together with the fact that $Du\in C^\alpha$, we have
\begin{align*}
|D_{x^\prime}u(x^\prime,\rho)|=|D_{x^\prime}u(x^\prime,\rho)-D_{x^\prime}u(0,\rho)|\leq M|x^\prime|^\alpha.
\end{align*}

Combining with mean value theorem, we have
\begin{align*}
|u(x^\prime,\rho)-u(0,\rho)|=|D_{x^\prime}u(\xi,\rho)||x^\prime|\leq M|x^\prime|^{1+\alpha}.
\end{align*}

Consequently,
\begin{align*}
|u(x^\prime,\rho)|\leq |u(x^\prime,\rho)-u(0,\rho)|+|u(0,\rho)|\leq M|x^\prime|^{1+\alpha}+\lambda.
\end{align*}

It follows that
\begin{align}\label{e2}
u(x^\prime,\rho)\leq M|x^\prime|^{1+\alpha}+\lambda.
\end{align}

Applying Young's inequality to (\ref{e1}) and (\ref{e2}) with $p=\frac{2}{1+\alpha }$ and $q=\frac{2}{1-\alpha }$, we obtain
\begin{align}\label{e3}
u( x^\prime,0) \leq &\  A|x^\prime|^2+C_0A^{-\gamma },\\ \nonumber
u( x^\prime,\rho ) \leq &\ A|x^\prime|^2+C_0A^{-\gamma}+\lambda,
\end{align}
where $A$ is a positive constant to be determined later, $\gamma =\frac{1+\alpha}{1-\alpha}$ and $C_0$ is a positive constant depending only on $M$ and $\alpha $.

Let
\begin{equation*}
U=\{x\in \Omega:|x^\prime|<\rho,0<x_n<\rho\}\subseteq B_R.
\end{equation*}

Consider the following barrier function
\begin{equation*}
w( x^\prime,x_n) =A|x^\prime|^2+C_0A^{-\gamma }+\frac{\lambda x_n}{\rho }+Bx_n(x_n-\rho),
\end{equation*}
where $0<B\leq 1$ is a constant to be determined later. 

On $\partial U\cap\{x_n=0\}$ and $\partial U\cap \{x_n=\rho\}$, we have $w\geq u$ by (\ref{e3}). 

On $\partial U\cap \{|x^\prime|=\rho\}$, we have
\begin{equation*}
w\geq A\rho ^2+C_0A^{-\gamma }-\frac{B\rho^2}{4}.
\end{equation*}

Choose $A$ sufficiently large such that
\begin{equation*}
A\geq \rho^{-2}\sup_{B_R}u+\frac{1}{4}. 
\end{equation*}

It follows that $w\geq u$ on $\partial U$.

Note that
\begin{align*}
D^2w=diag(2A,\cdots,2A,2B).
\end{align*}

It follows that
\begin{equation*}
\frac{\sigma _n}{\sigma _k}\left( D^2w\right) \leq \frac{2^nA^{n-1}B}{2^kA^k}=2^{n-k}A^{n-k-1}B.
\end{equation*}

Let
\begin{equation*}
B=2^{-(n-k)}A^{-(n-k-1)}\cdot \mu.
\end{equation*}

We may choose $A$ sufficiently large such that $B\leq 1$. Consequently, $\frac{\sigma _n}{\sigma _k}\left( D^2w\right) \leq \mu$. By Lemma \ref{compare 1}, we obtain $w\geq u$ in $U$. In particular,
\begin{equation*}
0\leq u\left( 0,\frac{\rho }{2}\right) \leq  w\left( 0,\frac{\rho }{2}\right) =C_0A^{-\gamma }+\frac{\lambda }{2}-2^{-(n-k+2)}A^{-(n-k-1)}\cdot \mu \rho^2.
\end{equation*}

Since $\alpha >1-\frac{2}{n-k}$, we have $\gamma=\frac{1+\alpha}{1-\alpha} >n-k-1$. Choose $A$ sufficiently large such that
\begin{equation*}
C_0A^{-\gamma }\leq 2^{-(n-k+2)}A^{-(n-k-1)}\cdot \frac{ \mu \rho ^2}{2}.  
\end{equation*}

It follows that
\begin{equation*}
0\leq \frac{\lambda }{2}-2^{-(n-k+2)}A^{-(n-k-1)}\cdot \frac{ \mu \rho ^2}{2}. 
\end{equation*}

Consequently,
\begin{align*}
\lambda\geq 2^{-(n-k+2)}A^{-(n-k-1)}\cdot \mu \rho^{2}.
\end{align*}

The lemma for case $(1)$ is now proved.
\end{proof}

Proof of Lemma \ref{convex} for case $(2)$.
\begin{proof}
Without loss of generality, we may assume $x_0=0$. By subtracting a linear function, we may assume $u(0)=0$ and $Du( 0) =0$.  Let $0< \rho \leq \frac{R}{4}$ be fixed. Suppose
\begin{equation*}
\lambda =\min_{\partial B_\rho }u.
\end{equation*}

By rotating the coordinate, we may assume 
\begin{equation*}
u( 0,\cdots,0,\rho) =\lambda.
\end{equation*}

Let $K$ be a positive constant satisfying
\begin{equation*}
\| u\|_{W^{2,p}( \Omega) }\leq K.
\end{equation*}

For the sake of simplicity, denote 
\begin{align*}
x^\prime=( x_1,\cdots,x_{n-1}).
\end{align*}

Since $u\in W^{2,p}$, by Fubini theorem, we have
\begin{equation*}
\sum_{|\alpha|\leq 2}\int_{0}^{\frac{\rho }{4}}\int_{B_{2\rho }^{n-1}}| D_{x^\prime}^\alpha u\left( x^\prime,x_n\right)| ^p dx^\prime dx_n\leq K^p,
\end{equation*}
where $B_{2\rho }^{n-1}$ is the ball with radius $2\rho$ in $\mathbb{R}^{n-1}$.

By mean value theorem for integrals, there exists $0< \rho_1<\frac{\rho}{4}$ such that the function $v_1$ defined by
\begin{align*}
v_1(x^\prime) =u(x^\prime,\rho _1)
\end{align*}
satisfies
\begin{equation*}
\| v_{1}\|_{W^{2,p}\left( B_{2\rho}^{n-1}\right) }^p\leq \frac{4K^p}{\rho}.
\end{equation*}

Similarly, there exists $\frac{3\rho}{4}<\rho_2<\rho$ such that the function $v_2$ defined by
\begin{align*}
v_2(x^\prime) =u(x^\prime,\rho _2)
\end{align*} 
satisfies
\begin{equation*}
\| v_2\|_{W^{2,p}\left( B_{2\rho}^{n-1}\right) }^p\leq \frac{4K^p}{\rho}.
\end{equation*}

By Sobolev embedding theorem, we have 
\begin{equation} \label{w1}
\|v_i\| _{C^{1,\alpha }\left( \overline{B}_\rho^{n-1}\right) }\leq M,\quad i=1,2,
\end{equation}
where $M$ is a positive constant depending only on $n,p,\rho,K,R$, and $\alpha=1-\frac{n-1}{p}>1-\frac{2}{n-k}$.

Since $u$ is convex, $u(0) =0$ and $Du( 0) =0$, it follows that
\begin{equation}\label{w2}
u(0,\rho_1)\leq \lambda,\quad u(0,\rho_2)\leq \lambda.
\end{equation}

By convexity of $u$, we have
\begin{align}\label{w3}
\sup_{B_{\frac{R}{2}}}|Du|\leq \frac{8}{R}\sup_{B_{\frac{3R}{4}}}u\leq C_1,
\end{align}
where $C_1$ is a positive constant depending only on $n,p,K,R$.

By (\ref{w1}) and mean value theorem, we have
\begin{align*}
u(x^\prime,\rho_1)-u(0,\rho_1)=v_1(x^\prime)-v_1(0)=Dv_1(\xi)\cdot x^\prime\leq  Dv_1(0)\cdot x^\prime+M|x^\prime|^{1+\alpha}.
\end{align*}

Together with (\ref{w2}) and (\ref{w3}), we have
\begin{align}\label{w4}
u(x^\prime,\rho _1) \leq  \lambda+\sum_{i=1}^{n-1}a_ix_i+M| x^\prime|^{1+\alpha},
\end{align}
where $Dv_1(0)=a=(a_1,\cdots,a_{n-1})$ satisfies $|a|\leq C_1$.

Similarly,
\begin{align}\label{w5}
u(x^\prime,\rho _2) \leq  \lambda+\sum_{i=1}^{n-1}b_ix_i+M| x^\prime|^{1+\alpha},
\end{align}
where $Dv_2(0)=b=(b_1,\cdots,b_{n-1})$ satisfies $|b|\leq C_1$.

Applying Young's inequality to (\ref{w4}) and (\ref{w5}), we obtain
\begin{align}\label{a1}
u(x^\prime,\rho _1) \leq &\ \lambda +\sum_{i=1}^{n-1}a_ix_i+A|x^\prime|^{2}+C_0A^{-\gamma },\\ \nonumber
u( x^\prime,\rho _2)\leq &\ \lambda +\sum_{i=1}^{n-1}b_ix_i+A|x^\prime|^{2}+C_0A^{-\gamma },
\end{align}
where $A$ is a positive constant to be determined later, $\gamma =\frac{1+\alpha}{1-\alpha}$ and $C_0$ is a positive constant depending on $M$ and $\alpha$.

Let
\begin{equation*}
U=\{x\in \Omega:|x^\prime|<\rho,\rho_1<x_n<\rho_2\}\subseteq B_R.
\end{equation*}

Consider the following barrier function
\begin{align*}
w(x^\prime,x_n) =&\  A| x^\prime|^2+C_0A^{-\gamma}+B( x_n-\rho _1) ( x_n-\rho _2) \\
&\ +\left( \lambda +\sum_{i=1}^{n-1}( b_i-a_i) x_i\right) \frac{x_n-\rho _1}{\rho _2-\rho _1}+\lambda+\sum_{i=1}^{n-1}a_ix_i
\end{align*}
where $0<B\leq 1$ is a constant to be determined later. 

On $\partial U\cap\{x_n=\rho_1\}$ and $\partial U\cap \{x_n=\rho_2\}$, we have $w\geq u$ by (\ref{a1}). 

On $\partial U\cap \{|x^\prime|=\rho\}$, we have
\begin{equation*}
w\geq A\rho ^2+C_0A^{-\gamma }-\frac{B}{4}( \rho _2-\rho_1) ^2-3C_1\rho.
\end{equation*}

Choose $A$ sufficiently large such that
\begin{equation*}
A\geq \rho ^{-2}\sup_{B_R}u+\frac{1}{4}+\frac{3C_1}{\rho }.
\end{equation*}

It follows that $w\geq u$ on $\partial U$.

Note that 
\begin{equation*}
D^2w=diag\left( 2A,\cdots, 2A,2B\right) +(a_{ij}),
\end{equation*}
where 
\begin{align*}
a_{in}=a_{ni}=\frac{b_i-a_i}{\rho _2-\rho _1},\quad  1\leq i\leq n-1,
\end{align*}
and all other $a_{ij}$ are zero.

By direct computation, we have
\begin{equation*}
\sigma _k\left( D^2w\right) \geq 2^kA^k-C(n,k)A^{k-2}\max_{1\leq i\leq n-1}a_{in}^2.
\end{equation*}

Note that
\begin{equation*}
a_{in}^{2}\leq \frac{16C_1^2}{\rho ^2}.
\end{equation*}

Choose $A$ sufficiently large such that
\begin{equation*}
2^{k-1}A^k>C(n,k)A^{k-2}\cdot \frac{16C_1^2}{\rho ^2}>0.
\end{equation*}

It follows that 
\begin{equation*}
\frac{\sigma _{n}}{\sigma _{k}}\left( D^{2}w\right) \leq \frac{2^nA^{n-1}B}{2^{k-1}A^{k}}< 2^{n-k+1}A ^{n-k-1}B.
\end{equation*}

Let
\begin{equation*}
B=2^{-(n-k+1)}A^{-(n-k-1)}\cdot \mu.
\end{equation*}

We may choose $A$ sufficiently large such that $B\leq 1$. Consequently, $\frac{\sigma _n}{\sigma _k}\left( D^2w\right) \leq \mu $. By Lemma \ref{compare 1}, we obtain $w\geq u$ in $U$. In particular,
\begin{align*}
0\leq u\left( 0,\frac{\rho }{2}\right)\leq  w\left( 0,\frac{\rho }{2}\right) =&\ C_0A^{-\gamma }+2^{-(n-k+1)}A^{-(n-k-1)}\cdot \mu\left( \frac{\rho}{2}-\rho _1\right) \left( \frac{\rho}{2}-\rho_2\right) \\
&\ +\lambda \left( 1+\frac{\frac{\rho}{2}-\rho _1}{\rho _2-\rho _1}\right).
\end{align*}

Since $\alpha >1-\frac{2}{n-k}$, we have $\gamma >n-k-1$. We may choose $A$ sufficiently large such that
\begin{equation*}
C_0A^{-\gamma }<2^{-(n-k+1)}A^{-(n-k-1)}\cdot \frac{\mu\rho ^2}{32}. 
\end{equation*}

Together with the fact that $0<\rho_1<\frac{\rho}{4}$ and $\frac{3\rho}{4}<\rho_2<\rho$, we have
\begin{equation*}
0 \leq -2^{-(n-k+1)}A^{-(n-k-1)}\cdot\frac{\mu \rho^2}{32}+2\lambda .
\end{equation*}

It follows that
\begin{equation*}
\lambda \geq 2^{-(n-k+1)}A^{-(n-k-1)}\cdot\frac{\mu\rho^2 }{64}.
\end{equation*}

The lemma for case $(2)$ is now proved.
\end{proof}

\subsection{$p=\frac{( n-1) (n-k)}{2}$}

In this subsection, we will prove strict convexity by following the method of Collins and Mooney \cite{Mooney-Collins}.
\begin{lemm}
\label{RealMain} 
Let $n\geq 4$, $1\leq k\leq n-3$. Let $u$ be a viscosity subsolution of
\begin{align*}
\frac{\sigma _{n}}{\sigma _{k}}\left( D^{2}u\right) = 1,\quad in \quad B_1.
\end{align*}

Assume $ u\in W^{2,p}\left( B_{1}\right)$ for some $p\geq \frac{(n-1) (n-k)}{2}$. 

Then the graph of $u$ is strictly convex. In particular, there exists $h_0>0$ depending only on $n$, $k$, $p$ and $\|u\|_{W^{2,p}(B_1)}$ such that
\begin{equation*}
\left\{ x\in B_1| u ( x) <u( x_0) +q\cdot (x-x_0) +h_0 \right\} \Subset  B_1
\end{equation*}
for all $x_0\in B_{\frac{1}{2}}$ and some subgradient $q$ at $x_0$.
\end{lemm}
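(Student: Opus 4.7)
\medskip
\noindent\textbf{Proof proposal.} The super-critical range $p > \frac{(n-1)(n-k)}{2}$ follows immediately from Lemma \ref{convex}(2) applied at $\rho = 1/4$, so the essential new content is the borderline case $p = \frac{(n-1)(n-k)}{2}$. My plan is to argue by contradiction, using $W^{2,p}$-compactness to pass to a limit function whose graph contains a segment, and then deriving a contradiction via a barrier argument that uses a refined, vanishing-modulus Sobolev trace estimate at the critical exponent.

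\medskip
\noindent\textbf{Compactness.} Assuming the conclusion fails, there exist viscosity subsolutions $u_m$ with $\|u_m\|_{W^{2,p}(B_1)} \leq K$, points $x_m \in \overline{B}_{1/2}$, subgradients $q_m$, and heights $h_m \downarrow 0$ such that the section $\{u_m < u_m(x_m) + q_m\cdot(x-x_m) + h_m\}$ contains a point $z_m$ with $|z_m| = 1$. After translating $x_m$ to $0$ and subtracting the affine function $u_m(x_m) + q_m\cdot(x-x_m)$, I may assume $u_m(0) = 0$, $u_m \geq 0$, and $u_m(z_m - x_m) \leq h_m$. Since $p > n/2$ for $n\geq 4$ and $k\leq n-3$, $W^{2,p}\hookrightarrow C^0$ compactly, so along a subsequence $u_m \to u_\infty$ uniformly on $B_1$ and $u_m \rightharpoonup u_\infty$ weakly in $W^{2,p}$ with $\|u_\infty\|_{W^{2,p}} \leq K$. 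Stability of viscosity subsolutions under uniform convergence ensures $u_\infty$ is still a viscosity subsolution of $\frac{\sigma_n}{\sigma_k} = 1$, and passing to the limit $z_m - x_m \to z_\infty$ with $|z_\infty| = 1$ produces $u_\infty(0) = u_\infty(z_\infty) = 0$. Convexity forces $u_\infty \equiv 0$ on the segment $[0, z_\infty]$.

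\medskip
\noindent\textbf{Refined Sobolev on slices, barrier, and contradiction.} Rotate coordinates so $z_\infty = e_n$. By Fubini applied to $|D^2 u_\infty|^p$, pick $\rho_1 \in (0,1/8)$ and $\rho_2 \in (7/8, 1)$ such that the slice functions $v_i(x') := u_\infty(x', \rho_i)$ belong to $W^{2,p}(B^{n-1}_{1/2})$. Since $(0,\rho_i)$ lies on the zero segment and $u_\infty \geq 0$, $v_i$ attains its minimum at $x'=0$, so $v_i(0) = 0$ and $Dv_i(0) = 0$. At the critical exponent $p = \frac{(n-1)(n-k)}{2}$, the Campanato refinement of Morrey's inequality gives
\[
v_i(x') \leq C_{n,p}\,\omega_i(|x'|)\,|x'|^{1+\alpha},\qquad \alpha = 1 - \tfrac{2}{n-k},
\]
where $\omega_i(r) := \|D^2 v_i\|_{L^p(B^{n-1}_r)} \to 0$ as $r\to 0$ by absolute continuity of the Lebesgue integral. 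Set $\omega(\rho) = \max_i \omega_i(\rho)$. Young's inequality with conjugate exponents $\frac{2}{1+\alpha}$ and $\frac{2}{1-\alpha} = n-k$ yields, for every $A > 0$ and $|x'| \leq \rho$,
\[
\omega(\rho)\,|x'|^{1+\alpha} \leq A|x'|^2 + C_{n,k}\,\omega(\rho)^{n-k}\,A^{-(n-k-1)}.
\]
On the cylinder $U = \{|x'| < \rho\} \times (\rho_1, \rho_2)$, consider the barrier
\[
w(x', x_n) = A|x'|^2 + B(x_n - \rho_1)(x_n - \rho_2) + C_{n,k}\,\omega(\rho)^{n-k}\,A^{-(n-k-1)},
\]
with $B = c_{n,k} A^{-(n-k-1)}$ small enough that $\frac{\sigma_n}{\sigma_k}(D^2 w) \leq 1$, and $A$ large enough (in terms of $\sup_U u_\infty$ and $\rho$) to ensure $w \geq u_\infty$ on the lateral face $\{|x'|=\rho\}$. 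The Sobolev–Young estimate gives $w \geq u_\infty$ on the top and bottom. Lemma \ref{compare 1} then yields $u_\infty \leq w$ in $U$; evaluating at the midpoint $(0, (\rho_1+\rho_2)/2)$, which lies on the zero segment, gives
\[
0 \leq A^{-(n-k-1)}\left[C_{n,k}\,\omega(\rho)^{n-k} - \tfrac{c_{n,k}(\rho_2-\rho_1)^2}{4}\right].
\]
Since $\rho_2 - \rho_1 \geq 3/4$ is fixed while $\omega(\rho) \to 0$ as $\rho \to 0$, choosing $\rho$ sufficiently small produces a contradiction.

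\medskip
\noindent\textbf{Main obstacle.} The delicate step is the refined Sobolev trace: at the critical exponent one must upgrade the plain $C^{1,\alpha}$ seminorm to an absolutely continuous modulus $\omega_i(r) \to 0$, and the Young exponent has to line up as exactly $n-k$ so that it matches the $A^{-(n-k-1)}$ scaling of the Hessian-quotient barrier. This extra vanishing is precisely what bridges the gap between the strict-inequality argument in Lemma \ref{convex}(2) (where $\gamma > n-k-1$) and the borderline case treated here.
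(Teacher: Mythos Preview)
Your approach is correct but takes a genuinely different route from the paper's. The paper follows Collins--Mooney directly: after normalizing so that $u=0$ on the $x_n$-axis, it establishes a growth estimate $\sup_{|x'|=r} u(x',x_n) \gtrsim r^{2-2/(n-k)}$ via John's ellipsoid and an Alexandrov-type section bound (Lemmas \ref{RadiusEstimate} and \ref{SingularityGrowth}), and then invokes Lemma~2.2 of \cite{Mooney-Collins} on each slice $\{x_n=\text{const}\}$ to obtain $\int (\Delta u)^{(n-1)(n-k)/2}\,dx \gtrsim |\log r|$, contradicting $u\in W^{2,p}$; the quantitative $h_0$ is then read off from Lemma~5.2 of \cite{Mooney-Collins}. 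Your argument instead stays entirely within the Urbas barrier framework of Lemma~\ref{convex}: the key observation is that at the critical exponent the \emph{localized} Morrey inequality on the two slices gives $v_i(x') \le C\,\omega_i(C|x'|)\,|x'|^{1+\alpha}$ with a modulus $\omega_i(r)=\|D^2v_i\|_{L^p(B^{n-1}_r)}\to 0$ by absolute continuity, and that the Young exponent $\tfrac{2}{1-\alpha}=n-k$ aligns exactly with the barrier scaling $A^{-(n-k-1)}$, so the vanishing modulus substitutes for the strict inequality $\gamma>n-k-1$ used in the super-critical case. This bypasses the John-ellipsoid geometry and the Collins--Mooney integral lemma entirely, at the price of a compactness step to produce the limit function (a device that is in any case implicit in Lemma~5.2 of \cite{Mooney-Collins}). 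Both proofs are valid; yours is more self-contained relative to the barrier method already developed in Section~7.1, while the paper's yields an explicit quantitative $|\log r|$ blow-up rather than a soft contradiction.
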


To prove the above result, we will need a few technical lemmas first. 
\begin{lemm}
\label{RadiusEstimate}
Let $n\geq 3$, $1\leq k<n$ and let $\Omega$ be a bounded domain. Let $u$ be a viscosity subsolution of
\begin{align*}
\frac{\sigma _{n}}{\sigma _{k}}\left( D^{2}u\right)=1,\quad in \quad\Omega.
\end{align*}

Assume further that $u\geq 0$ in $\Omega$ and $u(0)=0$.

Denote the section $\Sigma _h=\left\{ x\in \Omega |u( x)<h\right\} $. Let $E$ be the ellipsoid of maximal volume contained within $\Sigma _{h}$. Let $a_1\geq\cdots\geq a_n$ be the length of principal semi-axes of $E$.

Then 
\begin{equation*}
(2h)^{\frac{n-k}{2}}\geq  a_{1}\cdots a_{n-k} .
\end{equation*}
\end{lemm}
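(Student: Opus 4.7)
The plan is to compare $u$ against an axis-aligned quadratic barrier supported on a slightly shrunk copy of the John ellipsoid $E$, and read off the desired inequality from the failure of that barrier to be a supersolution.

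First, I would translate and rotate so that $E = \{ x : \sum_{i=1}^n (x_i - p_i)^2 / a_i^2 \leq 1 \}$ is centered at $p$ with axes along the coordinate directions. For each $0 < t < 1$, set $E_t = p + t(E - p)$, an ellipsoid with semi-axes $t a_i$. The center $p$ lies in the interior of $E$, hence in $\Sigma_h$, so $u(p) < h$. For any $x = p + t(y - p)$ with $y \in E$, convexity gives $u(x) \leq (1-t) u(p) + t u(y) < h$, so $\overline{E_t} \subset \Sigma_h$ and $m_t := \max_{\overline{E_t}} u < h$. The barrier will be
\[
\varphi_t(x) = (m_t - h) + h \sum_{i=1}^n \frac{(x_i - p_i)^2}{(t a_i)^2},
\]
which is smooth, strictly convex with Hessian $\operatorname{diag}\bigl(2h/(t a_i)^2\bigr)$, equals $m_t$ on $\partial E_t$, and satisfies $\varphi_t(p) = m_t - h < 0 \leq u(p)$.

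Next, I would argue by contradiction: if $\varphi_t$ were a supersolution of $\sigma_n/\sigma_k \leq 1$ on $E_t$, then Lemma \ref{compare 1}(1) applied to $(u, \varphi_t)$ on $E_t$ with $u \leq m_t = \varphi_t$ on $\partial E_t$ would force $\varphi_t \geq u$ throughout $E_t$, contradicting the sign of $\varphi_t - u$ at $p$. Hence $\frac{\sigma_n}{\sigma_k}(D^2 \varphi_t) > 1$. The diagonal Hessian of $\varphi_t$ has eigenvalues $2h/(t a_i)^2$, so a direct calculation gives
\[
\frac{\sigma_n}{\sigma_k}(D^2 \varphi_t) = \frac{(2h)^{n-k}}{t^{2(n-k)} \sum_{|I|=k} \prod_{j \notin I} a_j^2}.
\]
Because $a_1 \geq \cdots \geq a_n$, the symmetric sum in the denominator is at least its single summand corresponding to $I = \{n-k+1, \ldots, n\}$, which is $(a_1 \cdots a_{n-k})^2$. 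Rearranging gives $t^{n-k} a_1 \cdots a_{n-k} < (2h)^{(n-k)/2}$, and sending $t \to 1^-$ yields the claim.

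The main obstacle I expect is precisely the one the shrinking resolves: if $E$ is allowed to touch $\partial \Sigma_h$, then $\max_{\partial E} u$ may equal $h$, so the naive barrier built on $E$ itself would satisfy $\varphi(p) = 0$ and we would lose the strict inequality $\varphi(p) < u(p)$ at the center that drives the comparison argument. Working on the strictly interior $E_t$, then passing to the limit, sidesteps this cleanly; all remaining ingredients (polynomial smoothness, strict convexity, and the hypotheses of Lemma \ref{compare 1}) are immediate.
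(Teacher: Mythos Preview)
Your proof is correct and uses essentially the same barrier-on-the-John-ellipsoid comparison as the paper. The paper streamlines the execution by first replacing $u$ with $u-h$ (so $u\le 0$ on $\overline{\Sigma_h}$) and then taking the barrier on $E$ itself to be an \emph{exact} solution of $\sigma_n/\sigma_k=1$, which eliminates the need for your shrinking $E_t$ and the limit $t\to 1^-$; the underlying idea is identical.
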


\begin{proof}
Replace $u$ with $u-h$, we have 
\begin{equation*}
u|_{\partial \Sigma _h}\leq 0,\quad \left\vert \min_{\Sigma_h}u\right\vert =h.
\end{equation*}

By rotation, we may assume 
\begin{equation}
E=\left\{ x\in 
\mathbb{R}
^{n}\left\vert \frac{( x_{1}-r_{1}) ^{2}}{a_{1}^{2}}+\cdots+\frac{( x_{n}-r_{n}) ^{2}%
}{a_{n}^{2}}\leq 1\right. \right\} ,  \label{JE}
\end{equation}
where $a_{1}\geq a_{2}\geq ...\geq a_{n}$. Let $A=diag(a_{1}^{2},\cdots ,a_{n}^{2})$. 

Define
\begin{equation*}
w=\left( \sigma _{n-k}( A) \right) ^{\frac{1}{n-k}}\cdot \frac{1}{2}\cdot \left( \frac{( x_{1}-r_1) ^{2}}{a_{1}^{2}}+\cdots+\frac{( x_{n}-r_{n}) ^{2}}{a_{n}^{2}}-1\right).
\end{equation*}

It is easy to check that $\frac{\sigma _{n}}{\sigma _{k}}\left( D^{2}w\right) =1$ and $w\geq u$ on $\partial E$. 

By Lemma \ref{compare 1}, $w\geq u$ in $E$. Consequently,
\begin{equation*}
h\geq |u( r_{1},r_{2},...,r_{n}) |\geq  \frac{1}{2}\left(\sigma _{n-k}\left( A\right) \right) ^{\frac{1}{n-k}}\geq \frac{1%
}{2}\left( a_{1}\cdots a_{n-k}\right) ^{\frac{2}{n-k}}.
\end{equation*}

The lemma is now proved.
\end{proof}

Denote $x=(x^\prime,x_n)\in \mathbb{R}^{n}$, where $x^\prime=(x_1,\cdots,x_{n-1})$. 

\begin{lemm}
\label{SingularityGrowth} 
Let $n\geq 3$, $1\leq k<n-1$. Let $u$ be a viscosity subsolution of
\begin{align*}
\frac{\sigma _{n}}{\sigma _{k}}\left(D^{2}u\right)= 1,\quad in\quad \{|x^\prime|<1\}\cap \{|x_n|<1\}.
\end{align*}

Assume further that $u\geq 0$ in $\{|x^\prime|<1\}\cap \{|x_n|<1\}$, and $u=0$ on $\{x^\prime=0\}$.

Then for all $0<r<1$ we have 
\begin{equation*}
\inf_{|x_n|<1}\sup_{|x^\prime|=r}u(x^\prime,x_n)\geq c(n,k)r^{2-\frac{2}{n-k}},
\end{equation*}
where $c(n,k)$ is a positive constant depending only on $n$ and $k$.
\end{lemm}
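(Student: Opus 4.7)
The plan is to run the quadratic–barrier comparison used in the proof of Lemma \ref{RadiusEstimate}, but on an explicit ellipsoid sitting inside a round cone that is forced into the sublevel set $\{u \le h\}$ by the vanishing of $u$ on the $x_n$-axis.

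First I would fix $x_n^0 \in (-1,1)$ and $r \in (0,1)$ and set $h = \sup_{|x'|=r} u(x', x_n^0)$; by the reflection $x_n \mapsto -x_n$ we may assume $x_n^0 \le 0$, which guarantees that the segment $S = \{0\} \times [x_n^0, x_n^0 + 1/2]$ lies in the domain and that $u \equiv 0$ on $S$. Since the restriction $x' \mapsto u(x', x_n^0)$ is convex with value $0$ at the origin and with $u \le h$ on $\{|x'|=r\}$, one-dimensional convex interpolation along rays through $0$ gives $u(\cdot,x_n^0) \le h$ on the whole closed disk $\overline B_r^{\,n-1}$. Using that $u$ is convex on the cylinder (viscosity solutions being locally convex by hypothesis, and the cylinder convex), this bound then propagates to the closed convex hull
\[
K := \mathrm{conv}\bigl( \overline B_r^{\,n-1} \times \{x_n^0\} \cup S \bigr),
\]
i.e., to the round cone of base radius $r$, height $1/2$, and apex $(0, x_n^0 + 1/2)$.

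Next I would inscribe in $K$ the ellipsoid
\[
E_0 = \Bigl\{\, \tfrac{2|x'|^2}{r^2} + 64\bigl(x_n - x_n^0 - \tfrac{1}{8}\bigr)^2 < 1 \,\Bigr\},
\]
with shape matrix $A = \mathrm{diag}(r^2/2,\ldots,r^2/2,1/64)$ and center $(0, x_n^0 + 1/8)$. The inclusion $\overline E_0 \subset K$ reduces, after squaring the $x'$-radii of cone and ellipsoid at height $x_n = x_n^0 + \tau$, to the nonnegativity of $(1-6\tau)^2$, so it holds with contact only at $\tau = 1/6$. I would then take the quadratic barrier from Lemma \ref{RadiusEstimate},
\[
w(x) = \tfrac{1}{2}\bigl(\sigma_{n-k}(A)\bigr)^{1/(n-k)} \Bigl( \tfrac{2|x'|^2}{r^2} + 64\bigl(x_n - x_n^0 - \tfrac{1}{8}\bigr)^2 - 1 \Bigr),
\]
which is smooth, convex, vanishes on $\partial E_0$, and satisfies $\tfrac{\sigma_n}{\sigma_k}(D^2 w) = 1$ identically.

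Finally, $u - h$ is a viscosity subsolution of $\tfrac{\sigma_n}{\sigma_k}(D^2 \cdot) = 1$ on $E_0$ with $u - h \le 0 = w$ on $\partial E_0$ (since $\overline E_0 \subset K \subset \{u \le h\}$); Lemma \ref{compare 1} then gives $u - h \le w$ on $E_0$, and evaluating at the center together with $u \ge 0$ yields $h \ge \tfrac{1}{2}(\sigma_{n-k}(A))^{1/(n-k)}$. Keeping only the mixed term in the expansion
\[
\sigma_{n-k}(A) \ge \binom{n-1}{n-k-1}\Bigl(\tfrac{r^2}{2}\Bigr)^{n-k-1}\cdot \tfrac{1}{64},
\]
whose binomial coefficient is positive precisely because $1 \le k < n-1$, I would extract $h \ge c(n,k)\, r^{2 - 2/(n-k)}$. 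The main obstacle is matching the cone geometry to the ``right'' ellipsoid so that one long semi-axis of size $\sim 1$ (forced by $u \equiv 0$ on the axis) is paired with $n-1$ short semi-axes of size $\sim r$ (forced by the base disk); this precise balance is what produces the $r^{2(n-k-1)}\cdot 1$ term inside $\sigma_{n-k}(A)$ and hence the sharp exponent $2 - 2/(n-k)$. The reflection step is what makes the long direction available uniformly in $x_n^0 \in (-1,1)$, so that the resulting constant depends only on $n$ and $k$.
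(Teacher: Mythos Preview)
Your argument is correct. Both your proof and the paper's rest on the same geometric fact: because $u$ vanishes along the $x_n$-axis, the sublevel set $\{u\le h\}$ contains a convex body with one ``long'' direction of size $\sim 1$ and $n-1$ ``short'' directions of size $\sim r$, and the quadratic barrier on an inscribed ellipsoid then forces $h\gtrsim r^{2-2/(n-k)}$. The difference is in execution. The paper argues by contradiction, passes through the full section $\Sigma_h$, invokes John's lemma to relate the John ellipsoid of $\Sigma_h$ to an auxiliary ellipsoid, and then applies Lemma~\ref{RadiusEstimate}. You instead build the cone $K$ explicitly, inscribe the ellipsoid $E_0$ by hand (your $(1-6\tau)^2\ge 0$ computation), and apply the barrier $w$ directly via Lemma~\ref{compare 1}; this bypasses both John's lemma and the separate statement of Lemma~\ref{RadiusEstimate}, giving a shorter and more self-contained argument with an explicit constant. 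The reflection step to force $x_n^0\le 0$ and the one-dimensional convexity step to fill in the base disk are clean and correct. One small remark: you could note that the center $(0,x_n^0+1/8)$ lies on the axis, so $u$ is exactly $0$ there (not merely $\ge 0$), which makes the final evaluation slightly sharper than stated.
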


\begin{proof}
Assume this is not true, then there exists $0<r_0<1$ and $|\tilde{x}_n|<1$ such that
\begin{align*}
\sup_{|x^\prime|=r_0}u(x^\prime,\tilde{x}_n)<c(n,k)r_0^{2-\frac{2}{n-k}}.
\end{align*}

Denote $c=c(n,k)$ and define $h=cr_{0}^{2-\frac{2}{n-k}}$. Then we have 
\begin{equation*}
G=\{x_n=\tilde{x}_n\}\cap \left\{ |x^\prime|<(c^{-1}h)^{\frac{1}{2}\frac{n-k}{n-k-1}%
}\right\} \subset \Sigma _{h},
\end{equation*}
where $\Sigma _h=\left\{  u<h\right\} $.

Since $\Sigma _{h}$ is convex, it contains the convex hull of the set $G$ and $\{x^\prime=0\}\cap \{|x_n|<1\}$. We conclude that $\Sigma _{h}$ contains the
following ellipsoid $F$.%
\begin{equation*}
F=\left\{ x\in 
\mathbb{R}
^{n}\left\vert \frac{x_{1}^{2}}{b_{1}^{2}}+\cdots+\frac{x_{n-1}^{2}}{b_{n-1}^{2}}+\frac{\left( x_{n}-\tilde{r}_n\right)
^{2}}{b_{n}^{2}}\leq 1\right. \right\} ,
\end{equation*}%
where $b_n=\frac{1}{4}\left( | \tilde{x}_n| +1\right) $, $b_i=\frac{1}{2n}(c^{-1}h)^{\frac{1}{2}\frac{n-k}{n-k-1}}$ for $1\leq i\leq n-1
$, and $\tilde{r}_n$ is the midpoint between $\tilde{x}_n$ and the further endpoint of $|x_n|=1$. 

Let $E$ be the ellipsoid of maximal volume contained within $\Sigma _{h}$. We may translate $\Sigma _{h}$ such that $0$ is the center of
mass. Then by John's lemma (see Lemma 2.1 in \cite{Mooney2015}), we have
\begin{equation*}
E\subset \Sigma _{h}\subset C( n) E.
\end{equation*}

Let $a_1\geq\cdots\geq a_n$ be the length of principal semi-axes of $E$. Since $F\subset \Sigma _{h}\subset C( n) E$, we
have
\begin{equation*}
a_{1}\cdots a_{n-k}\geq \tilde{C}( n,k) b_{1}\cdots b_{n-k-1}\cdot b_n.
\end{equation*}

Together with lemma \ref{RadiusEstimate}, we have
\begin{equation*}
(2h)^{\frac{n-k}{2}}\geq \tilde{C}(n,k) b_{1}\cdots b_{n-k-1}\geq \hat{C}(n,k)(c^{-1}h)^{\frac{n-k}{2}},
\end{equation*}%
which is a contradiction by choosing $c=c(n,k)$ sufficiently small.
\end{proof}

\begin{proof}[\textbf{Proof of Lemma \ref{RealMain}:}]
Assume that $u$ is not strictly convex. In other words, $u$ agrees with a
tangent plane on a set of dimension at least $1$. After subtracting the
tangent plane, translating and rescaling we may assume that $u\geq 0$ in $%
\{|x^\prime|<1\}\cap \{|x_n|<1\}$, and that $u=0$ on $\{x^\prime=0\}$. By Lemma \ref{SingularityGrowth}, we also have that 
\begin{equation*}
\inf_{|x_n|<1}\sup_{|x^\prime|=r}u(x^\prime, x_n)\geq c(n,k)r^{2-\frac{2}{n-k}}.
\end{equation*}

Apply Lemma 2.2 in \cite{Mooney-Collins} on the slices $\{x_n=const\}$ (taking $q=2-\frac{2}{n-k}$ and replacing $n$ by $n-1$) and integrate in $x_n$ to
conclude that 
\begin{equation*}
\int_{\{|x_n|<1\}\cap \{r<|x^\prime|<1\}}(\Delta u)^{\frac{( n-1) (n-k)}{2}}dx\geq c(n,k)|\log r|.
\end{equation*}
Since $u\in W^{2,p}$ for $p\geq \frac{( n-1) (n-k)}{2}$. Taking $r\rightarrow 0$ completes the proof of $u$ being strictly convex. The
existence of $h_{0}$ follows directly from Lemma 5.2 in \cite{Mooney-Collins}.
\end{proof}

\section{Proof of Theorem \ref{theorem-v}}

In this section, we will prove Theorem \ref{theorem-v}.

\begin{proof}
Let $x_0\in \Omega$, suppose $B_{8r}( x_0) \subseteq \Omega $. Without loss of generality, we may assume $x_0=0$. By subtracting a linear function, we may assume $u(0)=0$ and $Du( 0) =0$. 

By Sobolev embedding theorem, $u\in C^{1,\gamma }\left( \overline{B}_{6r}\right) $ for some $\gamma$ for case (2). Together with the fact that $u\in C^{1,\alpha}\left(\overline{\Omega}\right)$ in case (1), we may assume $u\in C^{1,\gamma}\left( \overline{B}_{6r}\right) $ in both cases.

Let $\tilde{\varphi}_m $ be a sequence of smooth functions converging to $u$ in uniformly in $\overline{B}_{4r}$. Without loss of generality, we may assume
\begin{align*}
|\tilde{\varphi}_m-u|\leq \frac{1}{2m},\quad in\quad \overline{B}_{4r}.
\end{align*}

Let 
\begin{align*}
\varphi_m=\tilde{\varphi}_m+\frac{1}{2m}.
\end{align*}

It follows that $\varphi_m$ is a sequence of smooth functions converging to $u$ in uniformly in $\overline{B}_{4r}$ and satisfies
\begin{align}\label{weak-1}
\varphi_m\geq u\geq \varphi_m-\frac{1}{m},\quad in\quad \overline{B}_{4r}.
\end{align}

Similarly, let $\title{f}_m$ be a sequence of positive smooth functions converge to $f$ in uniformly in $\overline{B}_{4r}$.  Without loss of generality, we may assume
\begin{align}\label{weak-2}
|\tilde{f}_m-f|\leq \frac{1}{2m},\quad \|\tilde{f}_m\|_{C^{1,1}\left(\overline{B}_{4r}\right)}\leq 2\|f\|_{C^{1,1}\left(\overline{B}_{4r}\right) },\quad in\quad \overline{B}_{4r}.
\end{align}

Let
\begin{align*}
f_m=\tilde{f}_m-\frac{1}{2m}.
\end{align*}

It follows that $f_m$ is a sequence of smooth functions converging to $f$ in uniformly in $\overline{B}_{4r}$ and satisfies
\begin{align}\label{weak-3}
f_m+\frac{1}{m}\geq f\geq f_m,\quad in\quad \overline{B}_{4r}.
\end{align}

When $m$ is sufficiently large, $f_m>0$. Then by \cite{Trudinger}, there exists a $C^{\infty }\left( \overline{B}_{2r} \right) $ convex solution $u_m$ to the following Dirichlet problem
\begin{align*}
\begin{cases}
\frac{\sigma _n}{\sigma _k}\left( D^2u_m\right)=f_m,\quad &\ in\quad B_{2r},\\
u_m = \varphi _m, &\ on\quad \partial B_{2r}.
\end{cases}
\end{align*}

By (\ref{weak-3}), $u_m$ is a supersolution of (\ref{viscosity}). By (\ref{weak-1}) and Lemma \ref{compare 1},
\begin{align*}
u_m\geq u,\quad in \quad B_{2r}.
\end{align*}

Consider
\begin{equation*}
v_m=u_m+\frac{M}{2m}\left(|x|^{2}-4r^{2}\right) -\frac{1}{m},
\end{equation*}
where $M$ is a positive constant to be determined later.  

Let $F=\frac{\sigma _{n}}{\sigma _{k}}$, $\tilde{F}=F^{\frac{1}{n-k}}$,  $%
A=D^{2}u_{m}$ and $B=\frac{M}{m}I$. Since $\tilde{F}$ is concave, we have $2%
\tilde{F}\left( \frac{A+B}{2}\right) \geq \tilde{F}\left( A\right) +\tilde{F}%
\left( B\right) $. Together with the fact that $\tilde{F}$ is homogeneous of degree one, we have
\begin{equation*}
\tilde{F}\left( D^{2}v_{m}\right) \geq \ \tilde{F}\left( D^{2}u_{m}\right) +%
\frac{M}{m}\tilde{F}(I)\geq f_{m}^{\frac{1}{n-k}}+\frac{M}{m}\tilde{F}(I).
\end{equation*}

We may choose $M$ sufficiently large (depending on $n,k$ and $\min_{%
\overline{\Omega}}f$), such that 
\begin{align*}
F(D^2v_m)\geq f_m+\frac{1}{m}\geq f.
\end{align*}

Note that $v_m=\varphi _m-\frac{1}{m}\leq u$ on $\partial B_{2r}$. By Lemma %
\ref{compare 1}, we conclude that 
\begin{align*}
u\geq v_m,\quad in\quad B_{2r}.
\end{align*}

Therefore, 
\begin{equation*}
u_m\geq u\geq u_m+\frac{M}{2m}\left( |x|^2-4r^2\right) -\frac{1}{m},\quad in
\quad B_{2r}.
\end{equation*}

It follows that $u_{m}$ converges to $u$ uniformly in $\overline{B}_{2r}$.

In the following proof, $C$, $\epsilon $ and $\tau $ are positive constants
depending on some of the quantities $n$, $\alpha ,$ $r,$ $k,$ $p,$ $\Omega ,$
$\min_{\overline{\Omega }}f,$ $\left\vert \left\vert f\right\vert
\right\vert _{C^{1,1}\left( \overline{\Omega }\right)}$, $\left\vert
\left\vert u\right\vert \right\vert _{C^{1,\alpha }\left( \overline{\Omega }%
\right) }$ and $\left\vert \left\vert u\right\vert \right\vert
_{W^{2,p}\left( \Omega \right) }$.

Since $u_{m}$ is convex, $u(0)=0$, $Du(0)=0$, and $u\in C^{1,\gamma }$, for
all $m$ sufficiently large, we have 
\begin{equation}
\max_{\overline{B}_{r}}|Du_{m}|\leq \frac{4}{r}\max_{\overline{B}%
_{2r}}|u_{m}|\leq \frac{8}{r}\max_{\overline{B}_{2r}}|u|\leq Cr^{\gamma }.
\label{weak-4}
\end{equation}

Note that for any $r>0$, there exists $N\left( r\right) >0$ such that (\ref%
{weak-4}) holds when $m>N\left( r\right) $. It follows that 
\begin{equation*}
\lim_{m\rightarrow \infty }Du_{m}(0)=0.
\end{equation*}

Define 
\begin{align*}
\overline{u}_m(x)=u_m(x)-u_m(0) -Du_m(0) \cdot x.
\end{align*}

It follows that $\overline{u}_m$ converges to $u$ uniformly in $\overline{B}
_{2r}$ and satisfies 
\begin{align*}
\overline{u}_m(0)=0,\quad D\overline{u}_m(0)=0.
\end{align*}

By Lemma \ref{convex} or Lemma \ref{RealMain}, we have 
\begin{align*}
\min_{x\in \partial B_r}u( x) \geq 2\epsilon>0.
\end{align*}

Denote 
\begin{equation*}
\Sigma^m _\epsilon=\{x\in \mathbb{R}^{n}|\overline{u}_m(x)<\epsilon \}
\end{equation*}

Since $\overline{u}_{m}$ converges to $u$, it follows that for all $m$
sufficiently large, 
\begin{equation}
\Sigma _{\epsilon }^{m}\subseteq B_{r}.  \label{weak-5}
\end{equation}

By (\ref{weak-2}), (\ref{weak-4}) and (\ref{weak-5}), diameter of $\Sigma^m
_\epsilon$, $\max_{\overline{\Sigma}^m _\epsilon}| D\overline{u}_m|$ and $\|
f_m\| _{C^{1,1}\left(\overline{\Sigma}^m _\epsilon \right) }$ are uniformly
bounded. By Theorem \ref{theorem-1}, there exists $\overline{B}_{2\tau
}\subseteq \Sigma_\epsilon^m\subseteq B_r$ such that 
\begin{equation*}
\max_{ \overline{B}_{2\tau} }|D^2\overline{u}_m|\leq C.
\end{equation*}

By Evans-Krylov theorem and Schauder theorem, 
\begin{equation*}
\| \overline{u}_m\| _{C^{3,\beta }\left( \overline{B}_{\tau }\right) }\leq C,
\end{equation*}
for any $0<\beta <1$.

Since $\overline{u}_{m}$ converges to $u$ uniformly in $\overline{B}_{2r}$,
it follows that 
\begin{equation*}
\Vert u\Vert _{C^{3,\beta }\left( \overline{B}_{\tau }\right) }\leq C,
\end{equation*}%
where $C$ also depends on $\beta $ in the previous line.

Theorem \ref{theorem-v} is now proved by a compactness argument.
\end{proof}

\noindent

{\it Acknowledgement}: The first author would like to thank Professor Pengfei Guan for enlightening conversations over concavity of Hessian quotient operator and sharing Lemma \ref{Concavity lemma}. Both authors would like to thank Connor Mooney for suggesting the method in \cite{Mooney-Collins}, which leads to the borderline case in Theorem \ref{theorem-v} (2). This was not included in the original version.

\end{document}